\def \R {\mathbb{R}}
\theoremstyle{definition}
\newtheorem{definition}{Definition}[section]
\theoremstyle{plain}
\newtheorem{theorem}[definition]{Theorem}
\newtheorem{proposition}[definition]{Proposition}
\newtheorem{lemma}[definition]{Lemma}
\newtheorem{corollary}[definition]{Corollary}
\numberwithin{equation}{section}
\renewcommand{\epsilon}{\varepsilon}
\renewcommand{\leq}{\leqslant}
\renewcommand{\le}{\leqslant}
\renewcommand{\geq}{\geqslant}
\renewcommand{\ge}{\geqslant}
\title[Optimal embedding results for fractional Sobolev spaces]{Optimal embedding results \\for fractional Sobolev spaces}
\author[S. Dipierro, E. Proietti Lippi, C. Sportelli and E. Valdinoci]{Serena Dipierro, Edoardo Proietti Lippi, Caterina Sportelli and Enrico Valdinoci}
\address{Department of Mathematics and Statistics
\newline\indent University of Western Australia \newline\indent
35 Stirling Highway, WA 6009 Crawley, Australia.\newline
\newline\indent
\tt serena.dipierro@uwa.edu.au \newline\indent
\tt edoardo.proiettilippi@uwa.edu.au \newline\indent
\tt caterina.sportelli@uwa.edu.au \newline\indent
\tt enrico.valdinoci@uwa.edu.au}
\begin{document}

\maketitle

\begin{abstract}
This paper deals with the fractional Sobolev spaces~$W^{s, p}(\Omega)$, with~$s\in (0, 1]$ and~$p\in[1,+\infty]$. Here, we use the interpolation results in~\cite{MR3813967} to provide suitable conditions on the exponents~$s$ and~$p$ so that the spaces~$W^{s, p}(\Omega)$ realize a continuous embedding when either~$\Omega=\R^N$ or~$\Omega$ is any open and bounded domain with Lipschitz boundary.

Our results enhance the classical continuous embedding and, when~$\Omega$ is any open bounded domain with Lipschitz boundary, we also improve the classical compact embeddings.

All the results stated here are proved to be optimal. Also, our strategy does not require the use of Besov or other interpolation spaces. 
\end{abstract}

\tableofcontents

\section{Introduction and statement of the main results}

In this paper we prove several results concerning the continuous and compact embeddings of fractional Sobolev spaces~$W^{s, p}(\Omega)$, where~$s\in (0, 1]$, $p\in [1, +\infty]$ and 
\begin{equation}\label{Omega}
{\mbox{either~$\Omega=\R^N$ or~$\Omega\subset\R^N$ is an open and bounded domain with Lipschitz boundary.}}
\end{equation}
All of the embeddings provided here are also shown to be optimal.
In particular, our work completes the classification of the fractional Sobolev embeddings by effectively proposing (depending on the interaction between the exponents~$s$, $p$ and~$N$) a series of optimal embedding results.

We remark that all the proofs that we present here are self-contained. Moreover, we do not make use of Besov spaces,
but rather we use the interpolation results provided by Brezis and Mironescu in~\cite{MR3813967}.

Besides, we point out that the present work refines some of the results on continuous embeddings obtained in~\cite{MR3910033} and~\cite{MR3990737}. For this, we provide some specific results of compact embeddings for the spaces~$W^{s, p}(\Omega)$ which go beyond the classical ones e.g. in~\cite{MR2944369} and~\cite[Theorem~16.1]{MR350177}.

The embedding results that
we present here can be of particular interest in the study of problems involving fractional~$p$-Laplacian operators. Indeed, they may enrich the literature and open up new scenarios.
To make an example, one could be interested in reconsidering the results in~\cite{TUTTI3} in light of the embeddings presented here and possibly choose some less restrictive assumptions.
\medskip

To establish our main results, we recall~\eqref{Omega} and we introduce the following notation. For~$s\in [0, 1]$ and~$p\in [1, +\infty]$, we define 
\[
[u]_{W^{s,p}(\Omega)}:=
\begin{cases}
\|u\|_{L^p(\Omega)}  &\mbox{ if } s=0,
\\ \\
\displaystyle\left(c_{N,s,p}\iint_{\Omega\times\Omega}\frac{|u(x)-u(y)|^p}{|x-y|^{N+sp}}\,dx\,dy \right)^{1/p} &\mbox{ if } s\in(0,1),
\\ \\
\|\nabla u\|_{L^p(\Omega)}  &\mbox{ if } s=1.
\end{cases}
\]
Here, $c_{N,s,p}$ denotes a normalizing constant, given explicitly by
\begin{equation*}
c_{N,s,p}:=\frac{s\,2^{2s-1}\,\Gamma\left(\frac{ps+p+N-2}{2}\right)}{\pi^{N/2}\,\Gamma(1-s)},
\end{equation*}
see e.g.~\cite[page~130]{MR3473114} and the references therein.

The normalizing constant is determined in such a way that
\[
\lim_{s\searrow0}[u]_{W^{s,p}(\Omega)}=[u]_{W^{0,p}(\Omega)}=\|u\|_{L^p(\Omega)}
\qquad{\mbox{and}}\qquad\lim_{s\nearrow1}[u]_{W^{s,p}(\Omega)}=[u]_{W^{1,p}(\Omega)}=\|\nabla u\|_{L^p(\Omega)},
\]
see~\cite{MR3586796}.

As customary, we define the fractional Sobolev space
\[
W^{s, p}(\Omega):=\left\{u\in L^p(\Omega) \mbox{ such that } [u]_{W^{s,p}(\Omega)}<+\infty \right\}
\]
endowed with the norm
\begin{equation}\label{normaWsp}
\|u\|_{W^{s, p}(\Omega)}:=
\begin{cases}
\|u\|_{L^p(\Omega)}  &\mbox{ if } s=0,
\\
\left([u]^p_{W^{s, p}(\Omega)} + \|u\|^p_{L^p(\Omega)}\right)^{\frac1p} &\mbox{ if } s\in (0, 1),
\\
\left(\|\nabla u\|^p_{L^p(\Omega)} + \|u\|^p_{L^p(\Omega)}\right)^{\frac1p} &\mbox{ if } s=1.
\end{cases}
\end{equation}

We present our main results distinguishing three different cases.

\subsection{The case $sp<N$}
In this setting, when~$\Omega=\R^N$,
our main result reads as follows:

\begin{theorem}\label{teorema1}
Let~$s\in [0,1]$ and~$p\in [1, +\infty)$ be such that~$sp<N$ and~$s\ne p$. Let~$\widetilde{s}$ and~$\widetilde{p}$ satisfy
\begin{equation}\label{insiemeimmersioneRN}
\begin{cases}
0\le \widetilde{s}\le s,\\
p \le\widetilde{p}\le 
\frac{Np}{N-(s-\widetilde s)p}.
\end{cases}
\end{equation}

Then, there exists a positive constant~$C=C(N,s,p,\widetilde{s},\widetilde{p})$ such that, for any~$u\in W^{s,p}(\R^N)$,
\begin{equation}\label{inequality1}
\|u\|_{W^{\widetilde{s},\widetilde{p}}(\R^N)}
\le C\|u\|_{W^{s,p}(\R^N)},
\end{equation}
namely, the space~$W^{s,p}(\R^N)$ is continuously embedded in~$W^{\widetilde{s},\widetilde{p}}(\R^N)$.

Moreover, the curve~$\gamma:[0,1]\to \R^2$ defined as
\begin{equation}\label{curva1}
\gamma(\theta):=(s_\theta,p_\theta)=
\left(\theta \widetilde{s}+(1-\theta)s,
\frac{p\widetilde{p}}{\widetilde{p}+\theta(p-\widetilde{p})}  \right)
\end{equation}
is such that, if~$0\le \theta_1 \le \theta_2 \le 1$, 
then the space~$W^{s_{\theta_1},p_{\theta_1}}(\R^N)$
is continuously embedded\footnote{In the spirit of~\cite[formula~(1.5)]{MR3813967}, we point out that the same result holds replacing~$\R^N$ with a halfspace.} in~$W^{s_{\theta_2},p_{\theta_2}}(\R^N)$.
\end{theorem}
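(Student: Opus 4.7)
My plan is to reduce the theorem to the classical fractional Sobolev embedding combined with a few applications of the Brezis--Mironescu interpolation inequality of~\cite{MR3813967}, without passing through Besov spaces. A first observation is that the second claim follows from the first: if $0 \le \theta_1 \le \theta_2 \le 1$, a direct algebraic check (using $\widetilde s \le s$, $p \le \widetilde p$, and the Sobolev constraint $\widetilde p \le Np/(N - (s-\widetilde s)p)$) shows that the pair $(s_{\theta_2}, p_{\theta_2})$ lies in the admissibility region~\eqref{insiemeimmersioneRN} associated to the starting pair $(s_{\theta_1}, p_{\theta_1})$, and one verifies in particular that $s_{\theta_1} p_{\theta_1} < N$ along the whole curve (since $s_\theta/N - 1/p_\theta$ is a convex combination of $s/N - 1/p<0$ and $\widetilde s/N - 1/\widetilde p \le s/N - 1/p$). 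Thus, once~\eqref{inequality1} is proved, its application to these re-labelled exponents yields the monotonicity along $\gamma$.

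To establish~\eqref{inequality1} I proceed in three interpolation steps. Setting $p^* := Np/(N-sp)$, the classical fractional Sobolev embedding gives $W^{s,p}(\R^N) \hookrightarrow L^{p^*}(\R^N)$. Using the Brezis--Mironescu inequality
\[
[u]_{W^{s_\theta, p_\theta}(\R^N)} \le C\,[u]_{W^{s_0, p_0}(\R^N)}^{1-\theta}\,[u]_{W^{s_1, p_1}(\R^N)}^{\theta},
\]
with $s_\theta = (1-\theta)s_0 + \theta s_1$ and $1/p_\theta = (1-\theta)/p_0 + \theta/p_1$, I first pick $(s_0, p_0) = (s,p)$, $(s_1, p_1) = (0, p^*)$ and $\theta = (s-\widetilde s)/s$: a direct computation gives $s_\theta = \widetilde s$ and $p_\theta = Np/(N - (s - \widetilde s) p) =: p^{**}$, yielding $W^{s,p}(\R^N) \hookrightarrow W^{\widetilde s, p^{**}}(\R^N)$. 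A second application, now with $(s_1, p_1) = (0, p)$ and the same $\theta$, gives $W^{s,p}(\R^N) \hookrightarrow W^{\widetilde s, p}(\R^N)$.

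For any target $\widetilde p \in [p, p^{**}]$, I perform a third Brezis--Mironescu interpolation in which the smoothness index is fixed at $\widetilde s$ and the integrability index is interpolated between $p$ and $p^{**}$: if $\eta \in [0,1]$ is chosen so that $1/\widetilde p = (1-\eta)/p + \eta/p^{**}$, the inequality combined with the two previous embeddings produces $[u]_{W^{\widetilde s, \widetilde p}(\R^N)} \le C \|u\|_{W^{s, p}(\R^N)}$. The $L^{\widetilde p}$-norm of $u$ is handled by the standard H\"older interpolation between $L^p$ and $L^{p^{**}}$ (the latter being controlled through $W^{\widetilde s, p^{**}} \hookrightarrow L^{p^{**}}$), and summing the two estimates gives~\eqref{inequality1}.

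The main technical difficulty I anticipate lies in carefully matching the hypotheses of the Brezis--Mironescu inequality to the full range of exponents in~\eqref{insiemeimmersioneRN}: in particular the degenerate cases $\widetilde s \in \{0, s\}$ and $\widetilde p \in \{p, p^{**}\}$, and the verification that the constant $C$ depends only on $N, s, p, \widetilde s, \widetilde p$. Beyond this, the argument is essentially bookkeeping of three interpolations anchored on the classical Sobolev embedding.
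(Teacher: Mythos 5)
Your argument is correct but follows a genuinely different route from the paper's, even though both ultimately rest on the Brezis--Mironescu interpolation inequality (Theorem~\ref{THMbrezismironuescu}) and the classical fractional Sobolev embedding $W^{s,p}(\R^N)\hookrightarrow L^q(\R^N)$ for $q\in[p,p^*_s]$.

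The paper proves~\eqref{inequality1} in a single interpolation step. It first establishes an auxiliary algebraic lemma (Lemma~\ref{curvaq11}) showing that every admissible pair $(\widetilde s,\widetilde p)$ lies on the Brezis--Mironescu segment $\gamma_q$ joining $(s,p)$ to $(0,q)$ for a suitable $q=\frac{p\widetilde p(s-\widetilde s)}{sp-\widetilde s\widetilde p}\in[p,p^*_s]$. One then applies Theorem~\ref{THMbrezismironuescu} once, with $s_1=0$, $s_2=s$, $p_1=q$, $p_2=p$, and closes via the Sobolev embedding $W^{s,p}(\R^N)\hookrightarrow L^q(\R^N)$. Your strategy instead avoids solving for $q$ explicitly: you anchor two interpolation segments at the extreme values $q=p$ and $q=p^*$ to land at the two endpoints $(\widetilde s,p)$ and $(\widetilde s,p^{**})$ with $p^{**}=\frac{Np}{N-(s-\widetilde s)p}$, and then perform a third interpolation at fixed smoothness $\widetilde s$ between those two. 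The exponent bookkeeping checks out (your computation that $\theta=(s-\widetilde s)/s$ sends $(s,p)\times(0,p^*)$ to $(\widetilde s,p^{**})$ is correct), and the constraints $s_2\ne p_2$ of Theorem~\ref{THMbrezismironuescu} are satisfied at each step thanks to the hypothesis $s\ne p$. Your approach is arguably more transparent since each step is a straightforward interpolation and it bypasses the algebraic verification that $q\in[p,p^*_s]$, at the cost of invoking Brezis--Mironescu three times instead of once. Two minor points worth flagging: the interpolation you quote is in seminorm form, whereas Theorem~\ref{THMbrezismironuescu} as stated in the paper uses the full inhomogeneous $W^{s,p}$ norms --- you handle the $L^{\widetilde p}$ contribution separately so the final estimate is the same, but working directly with the full norms would be cleaner; and the degenerate case $s=0$ (where $\theta=(s-\widetilde s)/s$ is undefined) should be dispatched separately, though it is trivial since it forces $\widetilde s=0$ and $\widetilde p=p$.

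Your treatment of the second assertion (the monotonicity along $\gamma$) is the same in substance as the paper's Lemma~\ref{896364389bcnxmnzfejEERRTC00}: one checks that $\theta\mapsto s_\theta/N-1/p_\theta$ is non-increasing and deduces both that $s_{\theta_1}p_{\theta_1}<N$ persists and that $(s_{\theta_2},p_{\theta_2})$ lies in the admissibility region for $(s_{\theta_1},p_{\theta_1})$, so the first claim can be applied with the re-labelled exponents. One small omission: to invoke the first claim along the curve one must also verify $s_{\theta_1}\ne p_{\theta_1}$, which the paper records and which follows quickly since equality would force $s_{\theta_1}=p_{\theta_1}=1$ and hence $s=p=1$, contradicting $s\ne p$.
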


The set of points satisfying~\eqref{insiemeimmersioneRN} and realizing the embedding stated in Theorem~\ref{teorema1} is illustrated in Figure~\ref{red}.

\begin{figure}[h]
\begin{center}
\includegraphics[scale=.31]{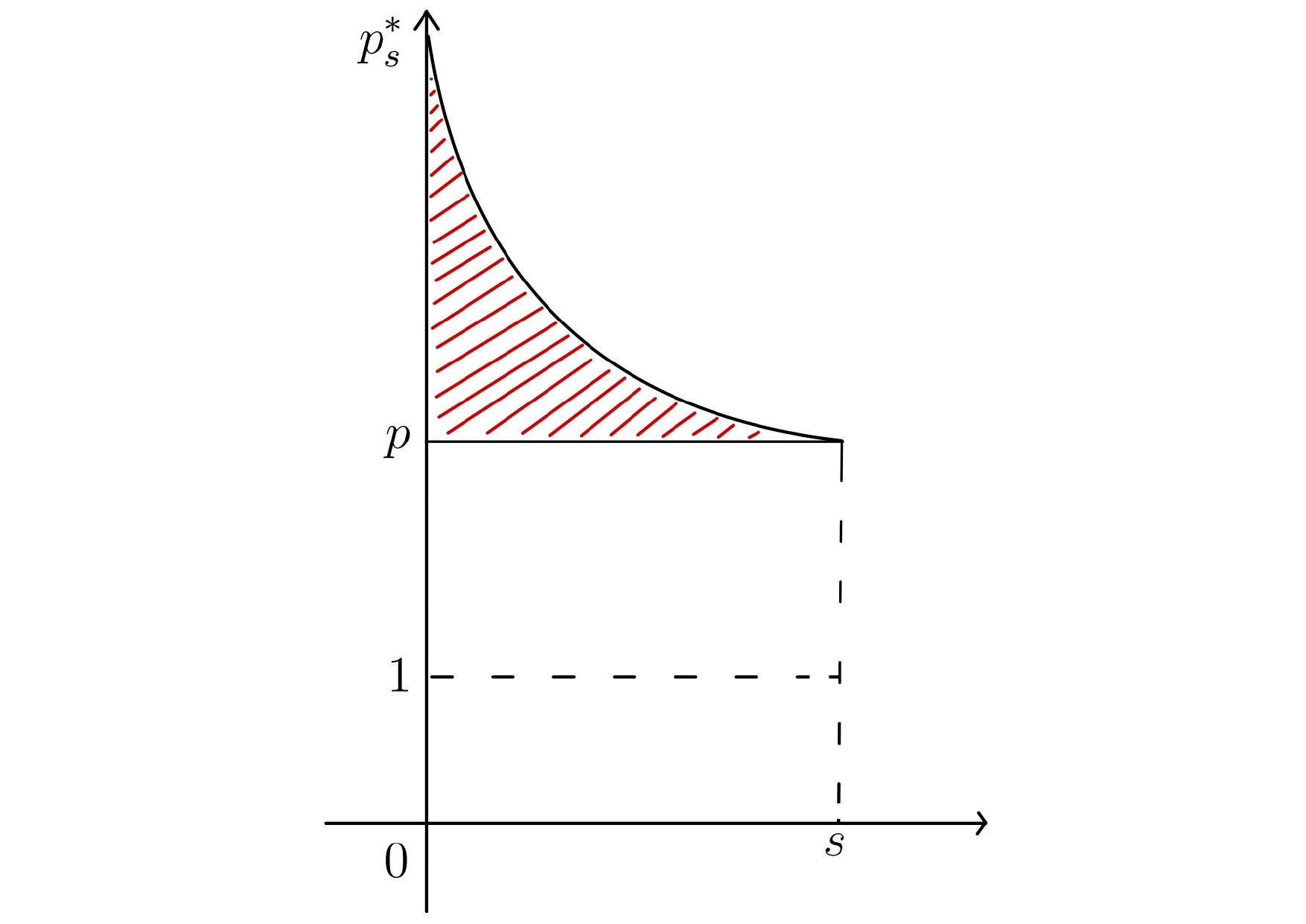}
\end{center}
\caption{The set of points satisfying~\eqref{insiemeimmersioneRN}.}
\label{red}
\end{figure}

When~$\Omega$ is an open and bounded domain with Lipschitz boundary, we have the following:

\begin{theorem}\label{teorema2}
Let~$s\in (0,1]$ and~$p\in [1, +\infty)$ be such that~$sp<N$ and~$s\ne p$.
Let~$\Omega\subset \R^N$ be an open and bounded domain with Lipschitz boundary.
Let~$\widetilde{s}$ and~$\widetilde{p}$ satisfy\footnote{We point out that, if~$\widetilde s = s$ and~$\widetilde p = p$, then the result is trivial.  Moreover, if~$\widetilde s = s=1$, then the desired result holds true for~$1\le \widetilde p\le p$. \label{footnotetrivial}}
\begin{equation}\label{insiemeimmersioneOmega}
\begin{cases}
0\le \widetilde{s}< s,\\
1 \le \widetilde{p}\le
\frac{Np}{N-(s-\widetilde s)p}.
\end{cases}
\end{equation}

Then, there exists a positive constant~$C=C(N,s,p,\widetilde{s},\widetilde{p},\Omega)$ such that, for any~$u\in W^{s,p}(\Omega)$,
\begin{equation}\label{inequality2}
\|u\|_{W^{\widetilde{s},\widetilde{p}}(\Omega)}\le C\|u\|_{W^{s,p}(\Omega)},
\end{equation}
namely, the space~$W^{s,p}(\Omega)$ is continuously embedded in~$W^{\widetilde{s},\widetilde{p}}(\Omega)$.

Moreover, the curve~$\gamma$ defined in~\eqref{curva1} is such that, if~$0\le \theta_1 \le \theta_2 \le 1$, 
then the space $W^{s_{\theta_1},p_{\theta_1}}(\Omega)$
is continuously embedded in~$W^{s_{\theta_2},p_{\theta_2}}(\Omega)$.
\end{theorem}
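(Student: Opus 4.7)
The plan is to reduce the statement to the Euclidean case already handled in Theorem~\ref{teorema1} via a bounded extension operator, supplemented by a direct H\"older estimate that exploits the boundedness of~$\Omega$ in the complementary range~$\widetilde p<p$, which lies outside the scope of Theorem~\ref{teorema1}. Since~$\Omega$ is Lipschitz, there is a bounded linear extension~$E\colon W^{s,p}(\Omega)\to W^{s,p}(\R^N)$ with~$(Eu)|_\Omega=u$ (classical for~$s=1$ and well known for~$s\in(0,1)$). When~$(\widetilde s,\widetilde p)$ satisfies~\eqref{insiemeimmersioneOmega} with the additional restriction~$\widetilde p\ge p$, the tuple~$(s,p,\widetilde s,\widetilde p)$ fulfills the hypotheses of Theorem~\ref{teorema1}, and applying it to~$Eu$ and restricting back to~$\Omega$ gives
\[
\|u\|_{W^{\widetilde s,\widetilde p}(\Omega)}\le\|Eu\|_{W^{\widetilde s,\widetilde p}(\R^N)}\le C\|Eu\|_{W^{s,p}(\R^N)}\le C'\|u\|_{W^{s,p}(\Omega)},
\]
which is~\eqref{inequality2} in this range.

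For the complementary range~$1\le\widetilde p<p$ I argue directly on~$\Omega$ by splitting the Gagliardo integral at unit distance. On the tail region~$\{|x-y|\ge 1\}$ the integrand is bounded by~$|u(x)-u(y)|^{\widetilde p}$, which integrates to a multiple of~$\|u\|_{L^{\widetilde p}(\Omega)}^{\widetilde p}\le C\|u\|_{L^p(\Omega)}^{\widetilde p}$ by boundedness of~$\Omega$. On~$\{|x-y|<1\}$ the key identity is
\[
\frac{|u(x)-u(y)|^{\widetilde p}}{|x-y|^{N+\widetilde s\widetilde p}}=\left(\frac{|u(x)-u(y)|^p}{|x-y|^{N+sp}}\right)^{\widetilde p/p}|x-y|^{(s-\widetilde s)\widetilde p-N(p-\widetilde p)/p},
\]
to which I apply H\"older's inequality with exponents~$p/\widetilde p$ and~$p/(p-\widetilde p)$; this produces a residual kernel of order~$|x-y|^{(s-\widetilde s)p\widetilde p/(p-\widetilde p)-N}$, whose exponent strictly exceeds~$-N$ precisely because~$\widetilde s<s$, hence is locally integrable on bounded sets. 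This is the most delicate step of the proof: a global H\"older without the split would produce the borderline non-integrable kernel~$|x-y|^{-N}$, and the strict inequality~$\widetilde s<s$ is used exactly to keep the residual kernel inside the integrability threshold, which also explains why~$\widetilde s=s$ is admitted only in the trivial footnoted case.

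Finally, the curve statement reduces to checking that, for~$0\le\theta_1\le\theta_2\le 1$, the pair~$(s_{\theta_2},p_{\theta_2})$ belongs to the admissible set~\eqref{insiemeimmersioneOmega} relative to~$(s_{\theta_1},p_{\theta_1})$; the embedding then follows from the two cases already settled. This verification is algebraic: since~$\theta\mapsto s_\theta$ and~$\theta\mapsto 1/p_\theta$ are affine, the differences~$s_{\theta_1}-s_{\theta_2}$ and~$1/p_{\theta_2}-1/p_{\theta_1}$ are proportional to~$\theta_2-\theta_1$, and the critical threshold inequality~$1/p_{\theta_2}\ge 1/p_{\theta_1}-(s_{\theta_1}-s_{\theta_2})/N$ reduces by linearity to its endpoint version at~$(\theta_1,\theta_2)=(0,1)$, which is exactly the constraint~\eqref{insiemeimmersioneOmega} imposed on~$(s,p,\widetilde s,\widetilde p)$. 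The remaining monotonicity requirements~$s_{\theta_2}\le s_{\theta_1}$, $p_{\theta_2}\ge 1$ and~$s_{\theta_1}p_{\theta_1}<N$ are immediate from the convex-combination structure of~$\gamma$.
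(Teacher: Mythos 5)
Your argument is correct in substance and, in the range $\widetilde p\ge p$, it takes a genuinely different route from the paper. You reduce matters to $\R^N$ by invoking a bounded extension operator $E\colon W^{s,p}(\Omega)\to W^{s,p}(\R^N)$, apply Theorem~\ref{teorema1} to $Eu$, and restrict. The paper instead works directly on $\Omega$: it produces an interpolation curve in Lemma~\ref{curvaq11}, applies the Brezis--Mironescu interpolation inequality of Theorem~\ref{THMbrezismironuescu} (which is stated on bounded Lipschitz domains as well as on $\R^N$), and closes the loop with the bounded-domain Sobolev embedding $W^{s,p}(\Omega)\hookrightarrow L^q(\Omega)$. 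The two routes are of comparable depth --- the proof of the bounded-domain Sobolev embedding the paper cites also rests on extension --- but yours is more modular in that it delegates all the work to the $\R^N$ theorem, while the paper's keeps the bounded-domain proof structurally parallel to the $\R^N$ one without invoking an extension explicitly. In the range $\widetilde p<p$ you reproduce the H\"older factorization that the paper isolates as Lemma~\ref{immersionidisotto}, and your algebraic verification of the curve statement is exactly what the paper packages as Lemma~\ref{896364389bcnxmnzfejEERRTC00}.

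Two remarks, neither fatal. First, the claim that ``a global H\"older without the split would produce the borderline non-integrable kernel $|x-y|^{-N}$'' is not correct on a bounded domain: after H\"older the residual kernel $|x-y|^{(s-\widetilde s)p\widetilde p/(p-\widetilde p)-N}$ has exponent strictly greater than $-N$ precisely because $\widetilde s<s$, and on $\Omega\times\Omega$ (with $|x-y|$ bounded above by $\operatorname{diam}\Omega$) this is integrable with no splitting at all; this is exactly how the paper's Lemma~\ref{immersionidisotto} runs. The unit-distance split is what one would need on $\R^N\times\R^N$, where the same exponent fails to be integrable at infinity, but it is superfluous here. You are nonetheless right that the strict inequality $\widetilde s<s$ is the feature that keeps the residual exponent above the threshold $-N$. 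Second, your Gagliardo-integral computation implicitly assumes $\widetilde s\in(0,s)$ and that the $W^{s,p}$ seminorm is the double integral, so the endpoints $\widetilde s=0$ (where $W^{0,\widetilde p}(\Omega)=L^{\widetilde p}(\Omega)$ and the claim is just H\"older on $\Omega$) and $s=1$ (where the seminorm is $\|\nabla u\|_{L^p(\Omega)}$) require a separate, elementary sentence; the paper likewise branches off the case $\widetilde s=0$ explicitly.
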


We point out that Theorem~\ref{teorema2}
has been already proved
in the case~$\widetilde p = p=2$ in~\cite[Lemma~2.1]{MR4736013},
and in the case~$\widetilde p = p$ for any~$p\in(1,N)$  in~\cite[Theorem~3.2]{DPSV2}.

The set of points satisfying~\eqref{insiemeimmersioneOmega} and realizing the embedding stated in Theorem~\ref{teorema2} is visualized in Figure~\ref{green}.

\begin{figure}[h]
\begin{center}
\includegraphics[scale=.28]{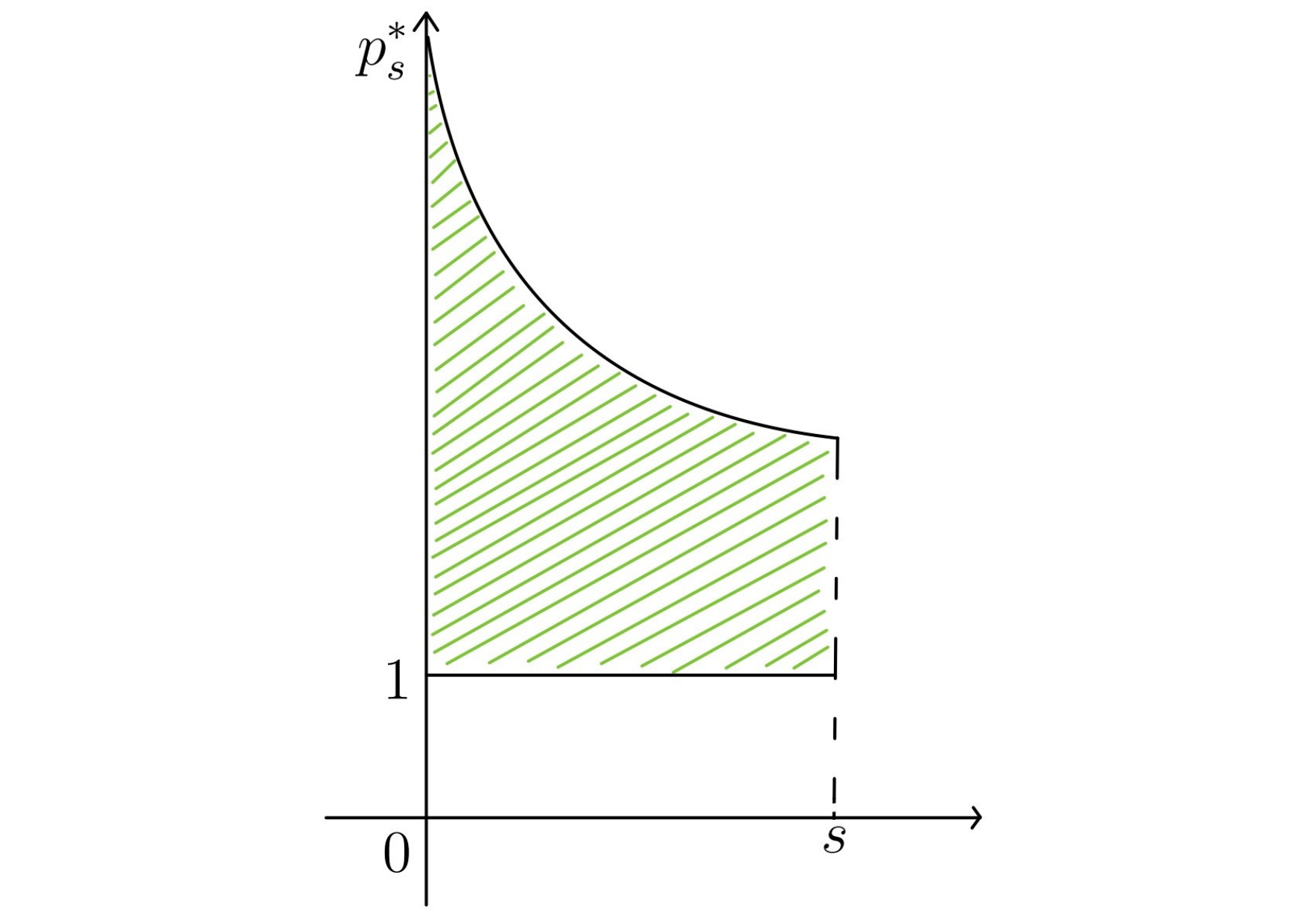}
\end{center}
\caption{The set of points satisfying~\eqref{insiemeimmersioneOmega}.}
\label{green}
\end{figure}

Let us now provide the compact embeddings.  On this matter, we have the following results:

\begin{theorem}\label{teorema3}
Let~$s\in (0,1]$ and~$p\in [1, +\infty)$ be such that~$sp<N$ and~$s\ne p$. Let~$\Omega\subset \R^N$ be an open and bounded domain with Lipschitz boundary. Let~$\widetilde{s}$ and~$\widetilde{p}$ satisfy
\begin{equation}\label{insiemeimmersionecompatta}
\begin{cases}
0\le \widetilde{s}<s,\\
\dfrac{sp}{sp-(p-1)\widetilde{s}} \le\widetilde{p}< 
\frac{Np}{N-(s-\widetilde s)p}.
\end{cases}
\end{equation}

Then, the space~$W^{s,p}(\Omega)$ is compactly embedded in~$W^{\widetilde{s},\widetilde{p}}(\Omega)$.

Moreover, the curve~$\gamma$ defined in~\eqref{curva1} is such that, if~$0< \theta_1 <\theta_2 \le 1$, 
then the space $W^{s_{\theta_1},p_{\theta_1}}(\Omega)$
is compactly embedded in~$W^{s_{\theta_2},p_{\theta_2}}(\Omega)$.
\end{theorem}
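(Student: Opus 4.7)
The plan is to combine the classical compact Rellich--Kondrachov embedding of $W^{s,p}(\Omega)$ into Lebesgue spaces with a Gagliardo--Nirenberg-type interpolation inequality in the spirit of Brezis--Mironescu~\cite{MR3813967}, using the latter to upgrade $L^q$-convergence of a subsequence to convergence in $W^{\widetilde s,\widetilde p}(\Omega)$.

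\textbf{Choice of the interpolation exponent.} I first select an exponent $q$ for which $(\widetilde s,\widetilde p)$ lies on the interpolation segment joining $(s,p)$ to $(0,q)$. Imposing $\widetilde s=(1-\theta)s$ and $\tfrac{1}{\widetilde p}=\tfrac{1-\theta}{p}+\tfrac{\theta}{q}$ forces
$$\theta\;=\;\frac{s-\widetilde s}{s}\qquad\text{and}\qquad q\;=\;\frac{(s-\widetilde s)\,p\,\widetilde p}{sp-\widetilde s\,\widetilde p}.$$
A short algebraic verification shows that the lower bound $\widetilde p\geq\tfrac{sp}{sp-(p-1)\widetilde s}$ in~\eqref{insiemeimmersionecompatta} is equivalent to $q\geq 1$, and that the strict upper bound $\widetilde p<\tfrac{Np}{N-(s-\widetilde s)p}$ is equivalent to the strict subcritical condition $q<\tfrac{Np}{N-sp}$. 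This is precisely the range for which the classical fractional Rellich--Kondrachov theorem on bounded Lipschitz domains gives the compact embedding $W^{s,p}(\Omega)\hookrightarrow\hookrightarrow L^q(\Omega)$; the same upper bound also ensures $W^{s,p}(\Omega)\hookrightarrow\hookrightarrow L^{\widetilde p}(\Omega)$.

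\textbf{Main compactness argument.} Given a bounded sequence $(u_n)\subset W^{s,p}(\Omega)$, I extract a subsequence (not relabeled) converging to some $u$ strongly in $L^q(\Omega)$ and in $L^{\widetilde p}(\Omega)$. After extending the differences $u_n-u_m$ to $\R^N$ via a bounded linear extension operator for Lipschitz domains and applying the Brezis--Mironescu interpolation inequality from~\cite{MR3813967} to the extensions, then restricting back, I plan to produce an estimate of the form
$$[u_n-u_m]_{W^{\widetilde s,\widetilde p}(\Omega)}\;\le\;C\,\|u_n-u_m\|_{L^q(\Omega)}^{\theta}\;\|u_n-u_m\|_{W^{s,p}(\Omega)}^{1-\theta}.$$
The first factor on the right tends to zero as $n,m\to\infty$ while the second is uniformly bounded, so $(u_n)$ is Cauchy in the seminorm $[\,\cdot\,]_{W^{\widetilde s,\widetilde p}(\Omega)}$, and combining with the $L^{\widetilde p}$-convergence this upgrades to convergence $u_n\to u$ in $W^{\widetilde s,\widetilde p}(\Omega)$.

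\textbf{Curve statement and expected obstacle.} The compactness along~$\gamma$ will follow by applying the first part of the theorem to the pair $(s_{\theta_1},p_{\theta_1})$ and $(s_{\theta_2},p_{\theta_2})$: since $\theta\mapsto s_\theta$ and $\theta\mapsto 1/p_\theta$ are affine along $\gamma$, the pair inherits the structural constraints~\eqref{insiemeimmersionecompatta}, and the argument above applies verbatim. I expect the most delicate point to be the precise transfer of the Brezis--Mironescu interpolation inequality from $\R^N$ to the Lipschitz domain~$\Omega$ through an extension that does not spoil the exponent bookkeeping, and the careful handling of the borderline $q=1$, where the chain of equivalences between the bounds on $\widetilde p$ and the admissible range of $q$ is tight.
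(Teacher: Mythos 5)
Your proposal is correct and follows essentially the same route as the paper's own proof: you choose the same interpolation exponent $q=\frac{(s-\widetilde s)\,p\,\widetilde p}{sp-\widetilde s\,\widetilde p}$ and parameter $\widetilde\theta=(s-\widetilde s)/s$, verify $q\in[1,p^*_s)$ exactly as in Lemma~\ref{curvaq13}, apply the Brezis--Mironescu interpolation inequality of Theorem~\ref{THMbrezismironuescu}, and invoke the fractional Rellich--Kondrachov compactness $W^{s,p}(\Omega)\hookrightarrow\hookrightarrow L^q(\Omega)$. Two of the detours you flag as delicate are in fact unnecessary: Theorem~\ref{THMbrezismironuescu} is stated directly for bounded Lipschitz domains, so no extension to $\R^N$ is needed, and it controls the full norm $\|u\|_{W^{\widetilde s,\widetilde p}(\Omega)}$ rather than only the Gagliardo seminorm, so the separate treatment of $L^{\widetilde p}$-convergence is superfluous.
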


\begin{figure}[h]
\begin{center}
\includegraphics[scale=.28]{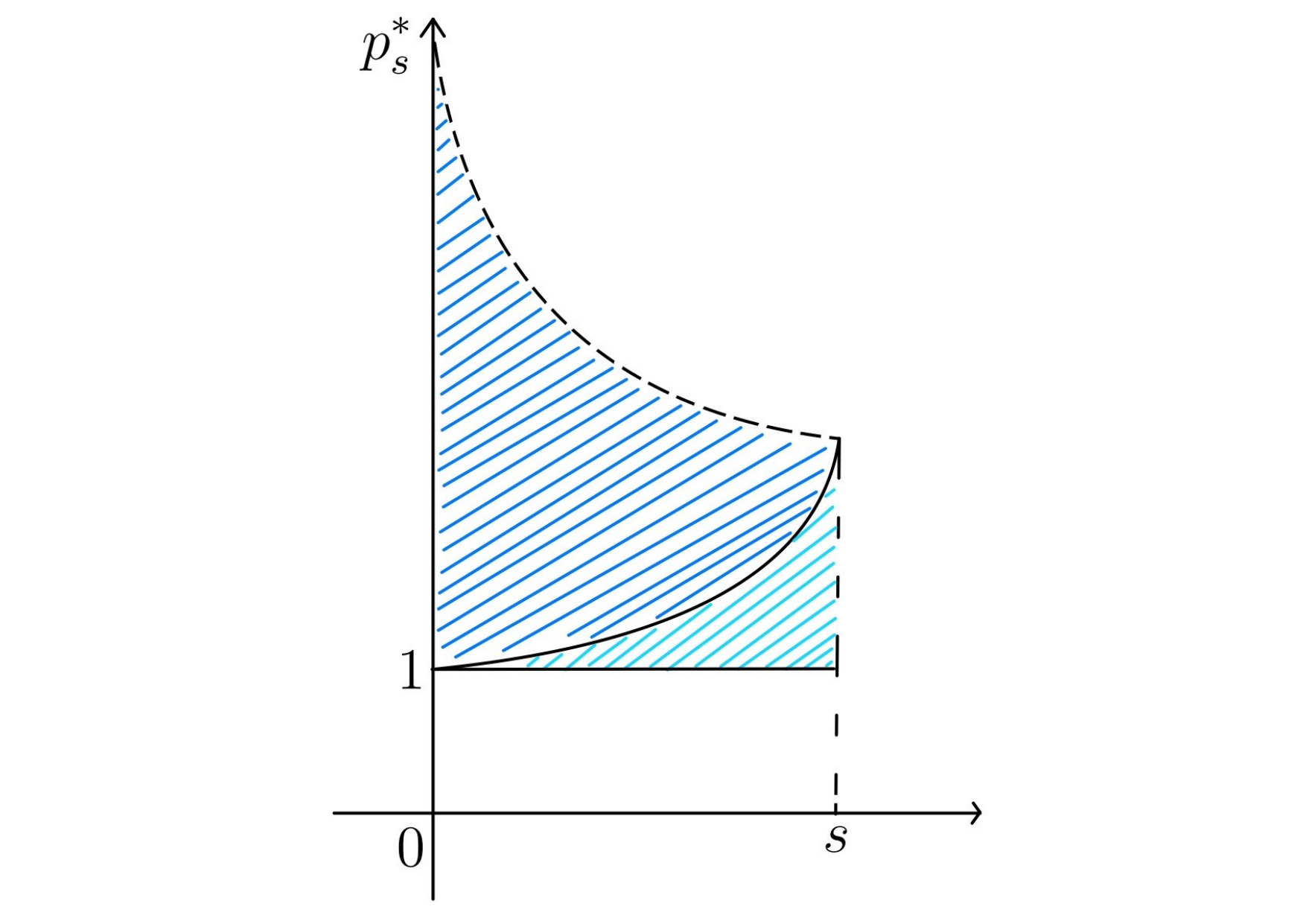}
\end{center}
\caption{The set of points satisfying
\eqref{insiemeimmersionecompatta2}. In dark blue, the set of points
satisfying~\eqref{insiemeimmersionecompatta}.}
\label{blu}
\end{figure}

\begin{corollary}\label{corollariocompattezza}
Let~$s\in (0,1]$ and~$p\in [1, +\infty)$ be such that~$sp<N$ and~$s\ne p$.
Let~$\Omega\subset \R^N$ be an open and bounded domain with Lipschitz boundary.
Let~$\widetilde{s}$ and~$\widetilde{p}$ satisfy
\begin{equation}\label{insiemeimmersionecompatta2}
\begin{cases}
0\le \widetilde{s}<s ,\\
1 \le \widetilde{p}< \frac{Np}{N-(s-\widetilde s)p}.
\end{cases}
\end{equation}

Then, the space~$W^{s,p}(\Omega)$ is compactly embedded in~$W^{\widetilde{s},\widetilde{p}}(\Omega)$.

Moreover, the curve~$\gamma$ defined in~\eqref{curva1} is such that, if~$0< \theta_1 <\theta_2 \le 1$, 
then the space $W^{s_{\theta_1},p_{\theta_1}}(\Omega)$
is compactly embedded in~$W^{s_{\theta_2},p_{\theta_2}}(\Omega)$.
\end{corollary}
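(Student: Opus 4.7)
The plan is to derive this corollary by combining Theorem~\ref{teorema3} with the continuous embedding of Theorem~\ref{teorema2}, using the latter to absorb exactly the region by which \eqref{insiemeimmersionecompatta2} enlarges \eqref{insiemeimmersionecompatta}. The two conditions differ only through the weakening of the lower bound on $\widetilde{p}$, from $\frac{sp}{sp-(p-1)\widetilde{s}}$ down to $1$; hence for pairs $(\widetilde{s},\widetilde{p})$ that already satisfy \eqref{insiemeimmersionecompatta} there is nothing new to prove, and I may focus on the regime
\[
0<\widetilde{s}<s,\qquad 1\le\widetilde{p}<\frac{sp}{sp-(p-1)\widetilde{s}}.
\]

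The key idea is to factor the embedding through an auxiliary intermediate space $W^{\sigma,q}(\Omega)$. I would take $\sigma=\widetilde{s}+\delta$ for a small $\delta>0$ and select an exponent $q$ in the range
\[
\frac{sp}{sp-(p-1)\sigma}\le q<\frac{Np}{N-(s-\sigma)p},\qquad q\ne\sigma.
\]
By continuous dependence of the endpoints on $\sigma$ (together with the non-emptiness of the analogous interval at $\sigma=\widetilde{s}$), this is a non-empty range for $\delta$ small enough; moreover, the strict chain
$\widetilde{p}<\frac{sp}{sp-(p-1)\widetilde{s}}<\frac{sp}{sp-(p-1)\sigma}\le q$
forces $q>\widetilde{p}$, and the ancillary requirement $q\ne\sigma$ can be arranged by a harmless perturbation. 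Theorem~\ref{teorema3} applied with target $(\sigma,q)$ then yields a compact embedding $W^{s,p}(\Omega)\hookrightarrow W^{\sigma,q}(\Omega)$.

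Next, I would invoke Theorem~\ref{teorema2} with starting pair $(\sigma,q)$ and target $(\widetilde{s},\widetilde{p})$: the hypothesis $\sigma q<N$ follows from $sp<N$ together with the upper bound on $q$, the condition $\widetilde{s}<\sigma$ is built into the construction, and the bound $\widetilde{p}\le\frac{Nq}{N-(\sigma-\widetilde{s})q}$ reduces to $\widetilde{p}\le q$, which is already in hand (provided $\delta q<N$, again automatic for $\delta$ small). This yields a continuous embedding $W^{\sigma,q}(\Omega)\hookrightarrow W^{\widetilde{s},\widetilde{p}}(\Omega)$, and composing the compact map with the bounded one delivers the desired compact embedding $W^{s,p}(\Omega)\hookrightarrow W^{\widetilde{s},\widetilde{p}}(\Omega)$. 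The main technical obstacle is precisely this bookkeeping: showing that a single exponent $q$ simultaneously meets the constraints of \eqref{insiemeimmersionecompatta} relative to $(s,p)\to(\sigma,q)$ and of \eqref{insiemeimmersioneOmega} relative to $(\sigma,q)\to(\widetilde{s},\widetilde{p})$, which ultimately boils down to a continuity and monotonicity argument in $\sigma$.

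Finally, the statement along the curve $\gamma$ follows by applying the compact embedding just proved to each consecutive pair $(s_{\theta_1},p_{\theta_1})\to(s_{\theta_2},p_{\theta_2})$ with $0<\theta_1<\theta_2\le 1$, after verifying, as in the analogous step in Theorem~\ref{teorema3}, that such pairs satisfy \eqref{insiemeimmersionecompatta2} between themselves.
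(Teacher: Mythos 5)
Your argument is correct, but it runs in the opposite direction from the paper's proof and is somewhat more elaborate. Both proofs handle the new region $\widetilde p<\frac{sp}{sp-(p-1)\widetilde s}$ by factoring through an intermediate space, but you go compact-then-continuous while the paper goes continuous-then-compact. You construct an auxiliary pair $(\sigma,q)$ with \emph{both} indices perturbed, use Theorem~\ref{teorema3} for the compact leg $W^{s,p}(\Omega)\hookrightarrow W^{\sigma,q}(\Omega)$ and Theorem~\ref{teorema2} for the continuous leg $W^{\sigma,q}(\Omega)\hookrightarrow W^{\widetilde s,\widetilde p}(\Omega)$; this forces you to locate $q$ by a continuity/monotonicity argument in $\sigma$, and to verify $\sigma q<N$, $\sigma\ne q$, and $q>\widetilde p$ (all of which you do correctly, noting that the $p=1$ case never reaches the new region). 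The paper instead keeps $\widetilde p$ fixed and halves only the $s$-gap: it checks that $(\frac{s+\widetilde s}{2},\widetilde p)$ satisfies~\eqref{insiemeimmersioneOmega}, applies Theorem~\ref{teorema2} for a continuous embedding $W^{s,p}(\Omega)\hookrightarrow W^{\frac{s+\widetilde s}{2},\widetilde p}(\Omega)$, and then uses the horizontal compact embedding of Lemma~\ref{orizzontalecompatto} (a special case of Theorem~\ref{teorema3} with the same exponent) to descend to $W^{\widetilde s,\widetilde p}(\Omega)$. The paper's intermediate pair is explicit and introduces no new exponent $q$ nor a small-$\delta$ limiting argument, so its bookkeeping is leaner; your route works equally well since a compact embedding composed with a bounded one is compact in either order. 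For the curve statement along $\gamma$, you finish as the paper does, by checking that consecutive points satisfy the pairwise constraint; the paper delegates this to Lemma~\ref{896364389bcnxmnzfejEERRTC00}.
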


The sets of points satisfying~\eqref{insiemeimmersionecompatta}
and~\eqref{insiemeimmersionecompatta2} are depicted in
Figure~\ref{blu}.

\subsection{The case $sp=N$}

In this case, if~$\Omega=\R^N$ we have the following continuous embedding:

\begin{theorem}\label{teorema1sp=N}
Let~$s\in (0,1]$ and~$p\in [1, +\infty)$ be such that~$sp=N$ and~$s\ne p$. Let~$\widetilde{s}$ and~$\widetilde{p}$ satisfy
\begin{equation}\label{insiemeimmersioneRNsp=N}
{\mbox{either}}\quad 
\begin{cases}
0< \widetilde{s}\le s,\\
p \le\widetilde{p}\le \dfrac{N}{\widetilde{s}},
\end{cases}\quad \mbox{ or } \quad
\begin{cases}
\widetilde{s}=0,\\
p\le \widetilde{p}< +\infty.
\end{cases}
\end{equation}

Then, there exists a positive constant~$C=C(N,s,p,\widetilde{s},\widetilde{p})$ such that, for any~$u\in W^{s,p}(\R^N)$,
\begin{equation}\label{inequality1sp=N}
\|u\|_{W^{\widetilde{s},\widetilde{p}}(\R^N)}
\le C\|u\|_{W^{s,p}(\R^N)},
\end{equation}
namely, the space~$W^{s,p}(\R^N)$ is continuously embedded in~$W^{\widetilde{s},\widetilde{p}}(\R^N)$.

Moreover, the curve~$\gamma$ defined in~\eqref{curva1}
is such that, if~$0\le \theta_1 \le \theta_2 \le 1$,
then the space $W^{s_{\theta_1},p_{\theta_1}}(\R^N)$
is continuously embedded in~$W^{s_{\theta_2},p_{\theta_2}}(\R^N)$.

In addition, if~$N=1$, then the space $W^{1,1}(\R^N)$ is also continuously embedded in~$L^\infty(\R^N)$.
\end{theorem}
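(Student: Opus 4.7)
The proof follows the scheme of Theorem~\ref{teorema1}: one first establishes a ``vertical'' embedding into Lebesgue spaces, then invokes the Brezis--Mironescu Gagliardo--Nirenberg interpolation of~\cite{MR3813967} to reach every intermediate fractional target, and finally propagates the argument along the curve~$\gamma$. As a base step I would prove the critical Sobolev-type embedding
\begin{equation*}
W^{s,p}(\R^N)\hookrightarrow L^{q}(\R^N)\qquad\text{for every }q\in[p,+\infty),
\end{equation*}
which handles the second branch of~\eqref{insiemeimmersioneRNsp=N} (the case $\widetilde{s}=0$). For this, fix $q\in[p,+\infty)$ and pick $s'\in(0,s)$ close enough to $s$ that $s'p<N$ and $q\leq Np/(N-s'p)$; the elementary inclusion $W^{s,p}(\R^N)\hookrightarrow W^{s',p}(\R^N)$, obtained by splitting the Gagliardo double integral at $|x-y|=1$ and controlling the far-field part by $\|u\|_{L^p}^p$, combined with Theorem~\ref{teorema1} applied to the parameters $(s',p)$ with target $(0,q)$, yields the $L^q$ estimate.

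For a point $(\widetilde{s},\widetilde{p})$ in the strict interior of the first branch of~\eqref{insiemeimmersioneRNsp=N}, i.e.~$0<\widetilde{s}<s$ and $p\leq\widetilde{p}<N/\widetilde{s}$, I would invoke Brezis--Mironescu with endpoints $(s,p)$ and $(0,q)$: setting $\theta=\widetilde{s}/s\in(0,1)$ and choosing $q\in[p,+\infty)$ via $\tfrac{1}{\widetilde{p}}=\tfrac{\theta}{p}+\tfrac{1-\theta}{q}$, a direct calculation using $sp=N$ shows that such a finite $q\geq p$ exists precisely for $\widetilde{p}\in[p,N/\widetilde{s})$, and together with the base step this gives
\begin{equation*}
[u]_{W^{\widetilde{s},\widetilde{p}}(\R^N)}\leq C\,[u]_{W^{s,p}(\R^N)}^{\theta}\,\|u\|_{L^q(\R^N)}^{1-\theta}\leq C'\,\|u\|_{W^{s,p}(\R^N)}.
\end{equation*}
The most delicate case, which I expect to be the principal obstacle, is the critical endpoint $\widetilde{p}=N/\widetilde{s}$: the formal limit $q\to+\infty$ above is not admissible because $W^{s,p}(\R^N)\not\hookrightarrow L^\infty(\R^N)$ when $N\geq 2$. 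Here I would instead interpolate \emph{along} the critical hyperbola itself, exploiting the identity $s_\theta\,p_\theta\equiv N$ that holds along the curve~\eqref{curva1} when $sp=\widetilde{s}\widetilde{p}=N$: selecting an intermediate critical pair $(s^*,p^*)$ with $0<s^*<\widetilde{s}$ and $s^*p^*=N$, and applying Brezis--Mironescu to the endpoints $(s,p)$ and $(s^*,p^*)$, lands on $(\widetilde{s},\widetilde{p})$ for a suitable~$\theta$. The estimate at the auxiliary pair $(s^*,p^*)$ is in turn obtained by approximation: take $\widetilde{p}_n\nearrow N/s^*$ in the strict-interior estimate above and pass to the limit via Fatou's lemma in the Gagliardo double integral, using the continuity of the normalising constant $c_{N,s^*,\widetilde{p}_n}$.

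The curve property then follows by reapplying the previous arguments with $(s,p)$ and $(\widetilde{s},\widetilde{p})$ replaced by $(s_{\theta_1},p_{\theta_1})$ and $(s_{\theta_2},p_{\theta_2})$ respectively: along $\gamma$ one has $s_{\theta_1}p_{\theta_1}\leq N$, so either the current theorem applies (when $s_{\theta_1}p_{\theta_1}=N$) or Theorem~\ref{teorema1} applies (when $s_{\theta_1}p_{\theta_1}<N$). Finally, the auxiliary embedding $W^{1,1}(\R)\hookrightarrow L^\infty(\R)$ when $N=1$ is classical and follows at once from the fundamental theorem of calculus applied to $u\in C^1_c(\R)$, extended by the density of $C^1_c(\R)$ in $W^{1,1}(\R)$.
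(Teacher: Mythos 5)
Your argument for the subcritical part of~\eqref{insiemeimmersioneRNsp=N} (namely the branch $\widetilde s=0$ with $\widetilde p<\infty$, and the interior points $0<\widetilde s<s$, $p\le\widetilde p<N/\widetilde s$) is sound and essentially parallel to the paper's: you derive $W^{s,p}(\R^N)\hookrightarrow L^q(\R^N)$ for $q\in[p,\infty)$ by passing through $W^{s',p}$ with $s'p<N$ and then invoking Theorem~\ref{teorema1}, whereas the paper cites this directly from~\cite[Theorem~6.9]{MR2944369}; and you compute the interpolation parameters $\theta,q$ explicitly, whereas the paper packages this into Lemma~\ref{curvaq1sp=N}. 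Both routes give the same estimates.

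The genuine gap is at the critical endpoint $\widetilde p=N/\widetilde s$, i.e.\ $\widetilde s\widetilde p=N$. Your approximation-plus-Fatou argument cannot work. The strict-interior bound at $(s^*,\widetilde p_n)$ has the form
\[
\|u\|_{W^{s^*,\widetilde p_n}(\R^N)}\le C_{\mathrm{BM}}(\theta,q_n)\,\bigl(C_{\mathrm{Sob}}(q_n)\bigr)^{1-\theta}\,\|u\|_{W^{s,p}(\R^N)},
\]
with $\theta=s^*/s$ fixed and $q_n\to+\infty$ as $\widetilde p_n\nearrow N/s^*$. But when $sp=N$ and $N\ge 2$, the constant $C_{\mathrm{Sob}}(q)$ in $W^{s,p}(\R^N)\hookrightarrow L^q(\R^N)$ \emph{must} blow up as $q\to\infty$: if it were bounded, then for any $u\in W^{s,p}\setminus L^\infty$ the left-hand side $\|u\|_{L^{q_n}}$ would tend to $+\infty$ while the right-hand side stays finite, a contradiction (and indeed there is no embedding into $L^\infty$ in this regime, by~\cite{MR3990737}). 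Hence $\liminf_n C_n=+\infty$, and the Fatou inequality on the Gagliardo integrand yields nothing. Furthermore, the preliminary step of ``interpolating along the hyperbola'' between $(s,p)$ and an auxiliary critical pair $(s^*,p^*)$ is circular: Brezis--Mironescu gives $\|u\|_{W^{\widetilde s,\widetilde p}}\le C\|u\|_{W^{s^*,p^*}}^{\theta}\|u\|_{W^{s,p}}^{1-\theta}$, so you would need the bound $\|u\|_{W^{s^*,p^*}}\le C'\|u\|_{W^{s,p}}$, which is exactly an instance of the claim you set out to prove (another critical point on $\widetilde s\widetilde p=N$). The paper does not attempt to derive the critical-hyperbola case from interpolation; it cites it directly from~\cite[formula~(1.6) in Theorem~B]{MR3990737}, which is a nontrivial theorem of Brezis--Mironescu and cannot be recovered by this approximation scheme.

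A minor point: in the interior estimate you wrote a bound on the Gagliardo seminorm $[u]_{W^{\widetilde s,\widetilde p}}$; the version of Theorem~\ref{THMbrezismironuescu} stated in the paper controls the full norm $\|u\|_{W^{\widetilde s,\widetilde p}}$, which is what is actually needed and is harmless here. The curve part and the $N=1$ statement are fine and match the paper.
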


We point out that, in this case, we can also provide a lower bound for the norm in~\eqref{inequality1sp=N}. Indeed,
by recalling the definition of the~$BMO$ norm in~\cite[Section~5]{MR2240172}, we have that the following result holds true:

\begin{corollary}\label{corollaryBMO}
Let~$s\in (0,1]$ and~$p\in [1, +\infty)$ be such that~$sp=N$ and~$s\ne p$.

Then, for any~$\widetilde{s}$ and~$\widetilde{p}$ satisfying~$\widetilde s\, \widetilde p =N$, there exist two positive constants~$C_1=C_1(N, \widetilde s, \widetilde p)$ and~$C_2=C_2(N, s, p, \widetilde s, \widetilde p)$ such that, for any~$u\in W^{s,p}(\R^N)$,
\begin{equation}\label{inequalityBMO}
\|u\|_{BMO}\le C_1 \|u\|_{W^{\widetilde{s},\widetilde{p}}(\R^N)}\le C_2\|u\|_{W^{s,p}(\R^N)}.
\end{equation}
\end{corollary}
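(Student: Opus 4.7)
The statement splits naturally into two separate bounds, which I would treat independently. The second inequality $\|u\|_{W^{\widetilde s, \widetilde p}(\R^N)} \le C_2 \|u\|_{W^{s,p}(\R^N)}$ is an immediate consequence of Theorem~\ref{teorema1sp=N}: since we are on the critical hyperbola $\widetilde s \widetilde p = N = sp$, the pair $(\widetilde s, \widetilde p)$ satisfies the admissible condition~\eqref{insiemeimmersioneRNsp=N} (with $\widetilde p = N/\widetilde s \ge p$ corresponding precisely to $\widetilde s \le s$), and Theorem~\ref{teorema1sp=N} then yields the continuous embedding $W^{s,p}(\R^N) \hookrightarrow W^{\widetilde s, \widetilde p}(\R^N)$ with constant $C_2 = C_2(N,s,p,\widetilde s, \widetilde p)$.

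For the first inequality $\|u\|_{BMO} \le C_1 \|u\|_{W^{\widetilde s, \widetilde p}(\R^N)}$, I would prove directly the critical embedding $W^{\widetilde s, \widetilde p}(\R^N) \hookrightarrow BMO(\R^N)$ by a localization and Hölder argument. Fix an arbitrary ball $B_r \subset \R^N$ and write $u(x) - u_{B_r} = |B_r|^{-1}\int_{B_r}(u(x)-u(y))\,dy$ to obtain
$$
\frac{1}{|B_r|}\int_{B_r}\bigl|u(x) - u_{B_r}\bigr|\,dx \;\le\; \frac{1}{|B_r|^2} \iint_{B_r \times B_r}\bigl|u(x)-u(y)\bigr|\,dx\,dy.
$$
Then apply Hölder with the split $|u(x)-u(y)| = \bigl(|u(x)-u(y)|/|x-y|^{(N+\widetilde s \widetilde p)/\widetilde p}\bigr)\cdot |x-y|^{(N+\widetilde s \widetilde p)/\widetilde p}$ and conjugate exponents $\widetilde p, \widetilde p'$. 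The first factor contributes the Gagliardo seminorm $[u]_{W^{\widetilde s, \widetilde p}(B_r)} \le [u]_{W^{\widetilde s, \widetilde p}(\R^N)}$, while the second reduces to $\iint_{B_r\times B_r} |x-y|^{2N/(\widetilde p-1)}\,dx\,dy$, computable by the scaling $x=rx'$, $y=ry'$.

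The crucial observation — and essentially the only substantive step — is that because of the critical relation $\widetilde s \widetilde p = N$, the scaling works out so that the power of $r$ on the right-hand side exactly cancels the $|B_r|^2 \sim r^{2N}$ factor in the denominator. Concretely, a short exponent check gives that the second Hölder factor contributes $r^{2N}$, leaving an estimate of the dimensionless form
$$
\frac{1}{|B_r|}\int_{B_r}\bigl|u - u_{B_r}\bigr|\,dx \;\le\; C\,[u]_{W^{\widetilde s, \widetilde p}(\R^N)},
$$
with $C=C(N,\widetilde s,\widetilde p)$ independent of the ball. Taking the supremum over balls yields $\|u\|_{BMO} \le C_1 \|u\|_{W^{\widetilde s, \widetilde p}(\R^N)}$. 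The degenerate case $\widetilde p = 1$ forces $\widetilde s = N = 1$ and is handled separately by the $W^{1,1}(\R) \hookrightarrow L^\infty(\R) \subset BMO(\R)$ statement already furnished by Theorem~\ref{teorema1sp=N}. I do not anticipate a genuine obstacle beyond this dimensional check; the only delicate point is making sure the integrability $|x-y|^{2N/(\widetilde p - 1)} \in L^1(B_r \times B_r)$ is automatic (it is, since the exponent is nonnegative) and that the $r$-cancellation is bookkept correctly.
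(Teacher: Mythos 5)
Your proof is correct and follows essentially the same route as the paper: both establish the first inequality by a direct H\"older estimate of the mean oscillation, exploiting that $N + \widetilde s\,\widetilde p = 2N$ makes the Gagliardo kernel appear with the right homogeneity, and both obtain the second inequality from Theorem~\ref{teorema1sp=N}. The only presentational difference is that the paper pulls out the factor $|y-z|^{2N/\widetilde p}\le(2r)^{2N/\widetilde p}$ before applying H\"older (so the conjugate factor is simply $|B_r\times B_r|^{1/\widetilde p'}$), whereas you keep it inside and compute $\iint_{B_r\times B_r}|x-y|^{2N/(\widetilde p-1)}\,dx\,dy$ by scaling — two bookkeepings of the same cancellation — and you additionally flag the degenerate corner $\widetilde p=1$ explicitly, which the paper leaves implicit.
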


In the case in which~$\Omega\subset \R^N$ is an open and bounded
domain with Lipschitz boundary, our main result reads as follows:

\begin{theorem}\label{teorema2sp=N}
Let~$s\in (0,1]$ and~$p\in [1, +\infty)$ be such that~$sp=N$ and~$s\ne p$.
Let~$\Omega\subset \R^N$ be an open and bounded domain with Lipschitz boundary.
Let~$\widetilde{s}$ and~$\widetilde{p}$ satisfy\footnote{See also footnote~\ref{footnotetrivial}.}
\begin{equation}\label{insiemeimmersioneOmegasp=N}
{\mbox{either}}\quad 
\begin{cases}
0< \widetilde{s}< s,\\
1 \le\widetilde{p}\le \dfrac{N}{\widetilde{s}},
\end{cases}\quad \mbox{ or } \quad
\begin{cases}
\widetilde{s}=0,\\
1\le \widetilde{p}< +\infty,
\end{cases}
\end{equation}

Then, there exists a positive constant~$C=C(N,s,p,\widetilde{s},\widetilde{p},\Omega)$ such that, for any~$u\in W^{s,p}(\Omega)$,
\begin{equation}\label{inequality2sp=N}
\|u\|_{W^{\widetilde{s},\widetilde{p}}(\Omega)}\le C\|u\|_{W^{s,p}(\Omega)},
\end{equation}
namely, the space~$W^{s,p}(\Omega)$ is continuously embedded in~$W^{\widetilde{s},\widetilde{p}}(\Omega)$.

Moreover, the curve~$\gamma$ defined in~\eqref{curva1} is such that, if~$0\le \theta_1 \le \theta_2 \le 1$, 
then the space $W^{s_{\theta_1},p_{\theta_1}}(\Omega)$ is continuously embedded in~$W^{s_{\theta_2},p_{\theta_2}}(\Omega)$.

In addition, if~$N=1$, then the space $W^{1, 1}(\Omega)$ is also continuously embedded in~$L^\infty(\Omega)$.
\end{theorem}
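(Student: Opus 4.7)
The plan is to reduce the statement to the already-available global result on $\R^N$ (Theorem~\ref{teorema1sp=N}) by means of a bounded extension operator for Lipschitz domains, and then to cover the additional range $1\le\widetilde p<p$ (not present in \eqref{insiemeimmersioneRNsp=N}) by a direct H\"older manipulation of the Gagliardo double integral, which exploits the boundedness of $\Omega$.

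In the first regime $\widetilde p\ge p$, since $\partial\Omega$ is Lipschitz, one has a bounded linear extension $E\colon W^{s,p}(\Omega)\to W^{s,p}(\R^N)$ whose output is supported in a fixed neighborhood of $\Omega$; applying Theorem~\ref{teorema1sp=N} to $Eu$ (the pair $(\widetilde s,\widetilde p)$ verifying \eqref{insiemeimmersioneOmegasp=N} also verifies \eqref{insiemeimmersioneRNsp=N} in this subrange) and then restricting to $\Omega$ yields the desired estimate. In the second regime $1\le\widetilde p<p$ with $0<\widetilde s<s$, the boundedness of $\Omega$ gives $\|u\|_{L^{\widetilde p}(\Omega)}\le C\|u\|_{L^p(\Omega)}$, so only the Gagliardo seminorm needs attention. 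I would write
\[
\iint_{\Omega\times\Omega}\frac{|u(x)-u(y)|^{\widetilde p}}{|x-y|^{N+\widetilde s\widetilde p}}\,dx\,dy=\iint_{\Omega\times\Omega}\left(\frac{|u(x)-u(y)|^p}{|x-y|^{N+sp}}\right)^{\!\widetilde p/p}|x-y|^{\beta}\,dx\,dy,
\]
with $\beta=(N+sp)\widetilde p/p-(N+\widetilde s\widetilde p)$, and apply H\"older's inequality with conjugate exponents $p/\widetilde p$ and $p/(p-\widetilde p)$. A short computation shows that the exponent in the auxiliary integral satisfies
\[
\frac{\beta p}{p-\widetilde p}+N=\frac{p\widetilde p(s-\widetilde s)}{p-\widetilde p}>0,
\]
so $\iint_{\Omega\times\Omega}|x-y|^{\beta p/(p-\widetilde p)}\,dx\,dy<+\infty$ thanks to $\widetilde s<s$ and to the boundedness of $\Omega$. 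The subcase $\widetilde s=0$ with $\widetilde p<p$ is immediate from $L^p(\Omega)\hookrightarrow L^{\widetilde p}(\Omega)$.

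The monotonicity along $\gamma$ follows by applying the embedding just proved to each pair $(s_{\theta_1},p_{\theta_1})$, $(s_{\theta_2},p_{\theta_2})$ with $\theta_1\le\theta_2$: the map $\theta\mapsto s_\theta$ is affine decreasing and a direct verification shows that the constraint $s_\theta p_\theta\le N$ is preserved along $\gamma$ whenever $\widetilde s\,\widetilde p\le N$, so $(s_{\theta_2},p_{\theta_2})$ sits in the admissible set \eqref{insiemeimmersioneOmegasp=N} relative to $(s_{\theta_1},p_{\theta_1})$. For $N=1$, combining the extension argument with the last assertion of Theorem~\ref{teorema1sp=N} gives $W^{1,1}(\Omega)\hookrightarrow L^\infty(\Omega)$.

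The hard part is the low-$\widetilde p$ regime: one has to pin down the exponent $\beta$ precisely for the H\"older computation to produce a convergent auxiliary integral, and the argument uses the boundedness of $\Omega$ essentially (the same rearrangement would diverge on $\R^N\times\R^N$, consistently with \eqref{insiemeimmersioneRNsp=N} requiring $\widetilde p\ge p$). Some additional bookkeeping is needed to patch together the two regimes along the critical line $\widetilde s\widetilde p=N$ and at the endpoint $\widetilde s=0$, and to check that the monotonicity along $\gamma$ does not produce pairs that fall outside \eqref{insiemeimmersioneOmegasp=N}.
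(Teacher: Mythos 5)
Your proof is correct, and the $\widetilde p\ge p$ regime is handled by a genuinely different mechanism from the paper. The paper applies the Brezis--Mironescu interpolation inequality (Theorem~\ref{THMbrezismironuescu}) \emph{directly on $\Omega$}: it first locates $(\widetilde s,\widetilde p)$ on an interpolation segment joining $(0,q)$ to $(s,p)$ for some $q\in[p,+\infty)$ via Lemma~\ref{curvaq1sp=N}, then interpolates on $\Omega$ between $L^q(\Omega)$ and $W^{s,p}(\Omega)$, using the Trudinger-type embedding $W^{s,p}(\Omega)\hookrightarrow L^q(\Omega)$ on bounded Lipschitz domains (\cite[Theorem 6.10]{MR2944369}); the critical line $\widetilde s\,\widetilde p=N$ is dispatched separately by citing \cite[formula (1.6) in Theorem B]{MR3990737}. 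You instead compose a bounded extension operator $E\colon W^{s,p}(\Omega)\to W^{s,p}(\R^N)$ (available for bounded Lipschitz domains, e.g.\ \cite[Theorem 5.4]{MR2944369}) with the already-established global Theorem~\ref{teorema1sp=N} and then restrict back to $\Omega$. Both routes are legitimate, since $(\widetilde s,\widetilde p)\in$ \eqref{insiemeimmersioneOmegasp=N} with $\widetilde p\ge p$ indeed implies $(\widetilde s,\widetilde p)\in$ \eqref{insiemeimmersioneRNsp=N}, and the critical-line case is then inherited from what Theorem~\ref{teorema1sp=N} already covers. The trade-off: your route is more modular and avoids redoing the interpolation on $\Omega$, but it leans on the extension theorem as an extra ingredient that the paper's argument does not explicitly invoke at this stage; the paper's version is more self-contained in the sense of working entirely on $\Omega$ with the one interpolation tool it has already set up. Your low-$\widetilde p$ computation (the H\"older rearrangement of the Gagliardo kernel, with $\beta p/(p-\widetilde p)+N=p\widetilde p(s-\widetilde s)/(p-\widetilde p)>0$) is exactly the content of the paper's Lemma~\ref{immersionidisotto}, and the monotonicity along $\gamma$ and the $N=1$ appendix are handled in the same spirit — the paper packages the curve verification as Lemma~\ref{896364389bcnxmnzfejEERRTC00BIS}, while you assert a "direct verification"; that verification is exactly the inequality $1/p_{\theta_2}\ge s_{\theta_2}/N$, which does follow from $1/p=s/N$ and $1/\widetilde p\ge\widetilde s/N$, so no gap there.
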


\begin{figure}[h]
\begin{center}
\includegraphics[scale=.24]{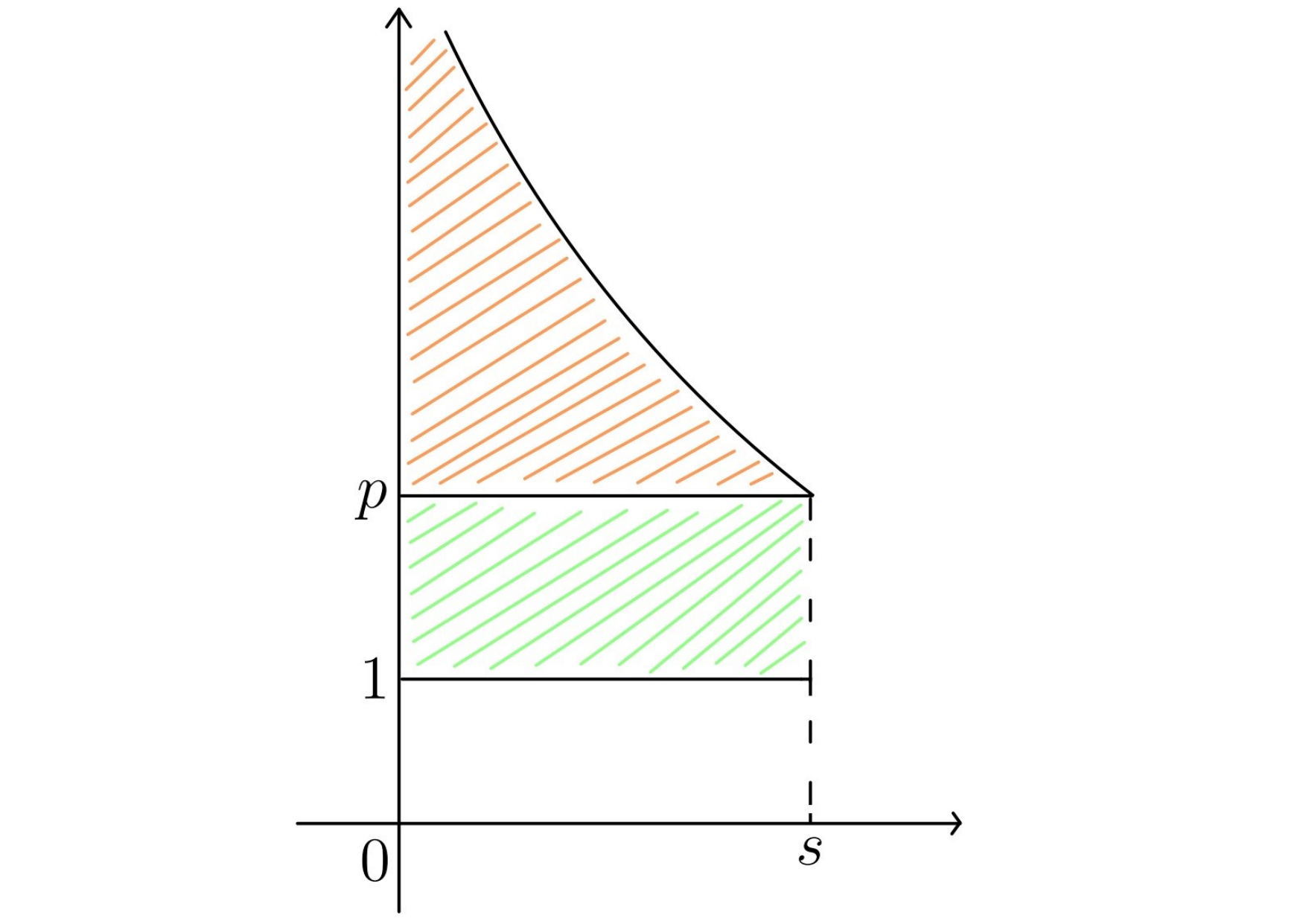}
\end{center}
\caption{In orange the set of points satisfying~\eqref{insiemeimmersioneRNsp=N}. In both orange and green the set of points satisfying~\eqref{insiemeimmersioneOmegasp=N}.
The set of point satisfying~\eqref{2insiemeimmersioneOmegasp=N} coincides with the set of points satisfying~\eqref{insiemeimmersioneOmegasp=N} except for the curve~$\widetilde s\, \widetilde p =N$.}
\label{sp=N}
\end{figure}

Moreover, the following compact embeddings hold true:

\begin{theorem}\label{teorema3sp=N}
Let~$s\in (0,1]$ and~$p\in [1, +\infty)$ be such that~$sp=N$ and~$s\ne p$.
Let~$\Omega\subset \R^N$ be an open and bounded domain with Lipschitz boundary.
Let~$\widetilde{s}$ and~$\widetilde{p}$ satisfy
\begin{equation}\label{2insiemeimmersioneOmegasp=N}
{\mbox{either}}\quad 
\begin{cases}
0< \widetilde{s}< s,\\
1 \le\widetilde{p}< \dfrac{N}{\widetilde{s}},
\end{cases}\quad \mbox{ or } \quad
\begin{cases}
\widetilde{s}=0,\\
1\le \widetilde{p}< +\infty.
\end{cases}
\end{equation}

Then, the space~$W^{s,p}(\Omega)$ is compactly embedded in~$W^{\widetilde{s},\widetilde{p}}(\Omega)$.

Moreover, the curve~$\gamma$ defined in~\eqref{curva1} is such that, if~$0< \theta_1 <\theta_2 \le 1$, 
then the space $W^{s_{\theta_1},p_{\theta_1}}(\Omega)$
is compactly embedded in~$W^{s_{\theta_2},p_{\theta_2}}(\Omega)$.
\end{theorem}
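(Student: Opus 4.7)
The plan is to factor the compact embedding through a strictly subcritical intermediate pair $(\bar s,\bar p)$, combining the critical continuous embedding of Theorem~\ref{teorema2sp=N} with the subcritical compact embedding of Corollary~\ref{corollariocompattezza}. This reduces the problem entirely to results already established, bypassing any direct analysis of compactness on the critical curve $sp=N$.

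Consider first the case $\widetilde s>0$. Since $\widetilde p<N/\widetilde s$, the open interval $(\widetilde s,\min\{s,N/\widetilde p\})$ is nonempty; pick $\bar s$ in it (perturbing slightly if needed so that $\bar s\ne\widetilde p$) and set $\bar p:=\widetilde p$. Then $0<\bar s<s$ and $\bar p\le N/\bar s$, so Theorem~\ref{teorema2sp=N} yields $W^{s,p}(\Omega)\hookrightarrow W^{\bar s,\bar p}(\Omega)$. Moreover $\bar s\bar p<N$, $\bar s\ne\bar p$, and the target $(\widetilde s,\widetilde p)$ satisfies~\eqref{insiemeimmersionecompatta2} relative to $(\bar s,\bar p)$: indeed $\widetilde p=\bar p<\frac{N\bar p}{N-(\bar s-\widetilde s)\bar p}$ reduces to $(\bar s-\widetilde s)\bar p>0$. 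Corollary~\ref{corollariocompattezza} then gives $W^{\bar s,\bar p}(\Omega)\hookrightarrow\hookrightarrow W^{\widetilde s,\widetilde p}(\Omega)$, and composition concludes. For $\widetilde s=0$, fix any $\bar s\in(0,s)$ with $\bar s\ne\widetilde p$; since $\frac{N\bar p}{N-\bar s\bar p}\to+\infty$ as $\bar p\nearrow N/\bar s$, we may choose $\bar p\in[1,N/\bar s)$ so large that $\widetilde p<\frac{N\bar p}{N-\bar s\bar p}$, and the same two-step scheme applies.

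For the curve statement, fix $0<\theta_1<\theta_2\le 1$. Using $s_\theta=\theta\widetilde s+(1-\theta)s$ and $1/p_\theta=\theta/\widetilde p+(1-\theta)/p$ together with $sp=N$, one computes
\[
\frac{1}{p_\theta}-\frac{s_\theta}{N}=\theta\!\left(\frac{1}{\widetilde p}-\frac{\widetilde s}{N}\right)=\theta\cdot\frac{N-\widetilde s\widetilde p}{N\widetilde p}.
\]
Since $\widetilde s\widetilde p<N$, this quantity is strictly positive and strictly increasing in $\theta$. In particular $s_{\theta_1}p_{\theta_1}<N$ (so the starting pair is strictly subcritical, with $s_{\theta_1}\ne p_{\theta_1}$ after a harmless perturbation), and
\[
\frac{1}{p_{\theta_2}}-\frac{s_{\theta_2}}{N}>\frac{1}{p_{\theta_1}}-\frac{s_{\theta_1}}{N},
\]
which is exactly the strict Sobolev bound $p_{\theta_2}<\frac{Np_{\theta_1}}{N-(s_{\theta_1}-s_{\theta_2})p_{\theta_1}}$ required by~\eqref{insiemeimmersionecompatta2}. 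The main statement of Corollary~\ref{corollariocompattezza}, applied to the pair $(s_{\theta_1},p_{\theta_1})$ with target $(s_{\theta_2},p_{\theta_2})$, then delivers the desired compact embedding.

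The main technical care needed lies in verifying that an admissible intermediate pair $(\bar s,\bar p)$ exists in all edge cases ($s=1$, $\widetilde p=1$, small $\widetilde s$, or the potential coincidence $\bar s=\bar p$): the strict inequalities in~\eqref{2insiemeimmersioneOmegasp=N} and the openness of the constraints leave enough room to achieve all conditions simultaneously, so this is not a genuine obstruction but merely a bookkeeping check.
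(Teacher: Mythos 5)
Your proof is correct and uses the same overall strategy as the paper's: factor the compact embedding as $W^{s,p}(\Omega)\hookrightarrow W^{\bar s,\bar p}(\Omega)\hookrightarrow\hookrightarrow W^{\widetilde s,\widetilde p}(\Omega)$, with the first arrow from the critical continuous embedding (Theorem~\ref{teorema2sp=N}) and the second from subcritical compactness. The paper always takes $\bar p=\widetilde p$ (choosing $\bar s=N/\widetilde p$ if $\widetilde p>p$, $\bar s=(s+\widetilde s)/2$ if $\widetilde p<p$, or using Lemma~\ref{orizzontalecompatto} directly if $\widetilde p=p$) and invokes Lemma~\ref{orizzontalecompatto} for the compact step; you invoke the stronger Corollary~\ref{corollariocompattezza}, which also works and permits a freer choice of intermediate pair. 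Where you genuinely diverge is the curve statement: you compute explicitly that $\tfrac{1}{p_\theta}-\tfrac{s_\theta}{N}=\theta\cdot\tfrac{N-\widetilde s\widetilde p}{N\widetilde p}>0$ for $\theta>0$, observe that $(s_{\theta_1},p_{\theta_1})$ is therefore strictly subcritical, and apply Corollary~\ref{corollariocompattezza} directly. The paper instead appeals to Lemma~\ref{896364389bcnxmnzfejEERRTC00BIS} and ``the first statement of Theorem~\ref{teorema3sp=N}'' at the base point $(s_{\theta_1},p_{\theta_1})$, which, as written, would require $s_{\theta_1}p_{\theta_1}=N$; your computation shows this is false for $\theta_1>0$, so the subcritical corollary is in fact the correct tool, and your treatment is the more careful one. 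Two small remarks: the ``harmless perturbation'' is unnecessary, since $\bar s<s\le 1\le \bar p$ forces $\bar s\ne\bar p$ automatically, and for the same reason $s_{\theta_1}<1<p_{\theta_1}$ (one has $p>1$ because $sp=N$, $s\le 1$ and $s\ne p$ exclude $p=1$), so the corresponding perturbation in the curve argument is also vacuous.
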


The set of point satisfying~\eqref{insiemeimmersioneRNsp=N},  \eqref{insiemeimmersioneOmegasp=N} and~\eqref{2insiemeimmersioneOmegasp=N} is illustrated in Figure~\ref{sp=N}.

\subsection{The case $sp>N$}
When~$\Omega=\R^N$, our main result reads as follows:

\begin{theorem}\label{teorema1sp>N}
Let~$s\in (0,1]$ and~$p\in [1, +\infty]$ be such that~$sp>N$. Let~$\widetilde{s}$ and~$\widetilde{p}$ satisfy\footnote{We allow~$\widetilde p=+\infty$ in~\eqref{insiemeimmersioneRNsp>N} meaning that~$W^{\widetilde{s}, \infty}(\R^N)= C^{0, \widetilde s}(\R^N)$ (see the comment after Remark~8.3 in~\cite{MR2944369}).}
\begin{equation}\label{insiemeimmersioneRNsp>N}
{\mbox{either }}\quad
\begin{cases}
0\le \widetilde s\le \dfrac{sp-N}{p},\\
p\le \widetilde p\le +\infty,
\end{cases}
\quad\mbox{ or } \quad \begin{cases}
 \dfrac{sp-N}{p}< \widetilde s\le s,\\
p\le \widetilde p\le \dfrac{Np}{N-(s-\widetilde s)p}.
\end{cases}
\end{equation}

Then, there exists a positive constant~$C=C(N,s,p,\widetilde{s},\widetilde{p})$ such that, for any~$u\in W^{s,p}(\R^N)$,
\begin{equation}\label{inequality1sp>N}
\|u\|_{W^{\widetilde{s},\widetilde{p}}(\R^N)}
\leq C\|u\|_{W^{s,p}(\R^N)},
\end{equation}
namely, the space~$W^{s,p}(\R^N)$ is continuously embedded in~$W^{\widetilde{s},\widetilde{p}}(\R^N)$.

Moreover, the curve~$\gamma$ defined in~\eqref{curva1}
is such that, if~$0\le \theta_1 \le \theta_2 \le 1$, 
then the space $W^{s_{\theta_1},p_{\theta_1}}(\R^N)$
is continuously embedded in~$W^{s_{\theta_2},p_{\theta_2}}(\R^N)$.
\end{theorem}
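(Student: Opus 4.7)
The plan is to mirror the strategy used for Theorem~\ref{teorema1}, the key additional ingredient being the Morrey-type endpoint embeddings available in the super-critical regime $sp>N$. I would first establish the two ``extremal'' embeddings underlying the admissible set~\eqref{insiemeimmersioneRNsp>N}: the classical Sobolev--Morrey bound $W^{s,p}(\R^N)\hookrightarrow L^\infty(\R^N)$ and the fractional H\"older embedding
\[
W^{s,p}(\R^N)\;\hookrightarrow\; C^{0,(sp-N)/p}(\R^N)=W^{(sp-N)/p,\infty}(\R^N),
\]
using the identification of $W^{\widetilde s,\infty}$ with a H\"older space as in the footnote. Both follow by classical oscillation estimates for the fractional Sobolev seminorm.

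Next, I would use the Brezis--Mironescu interpolation inequality from~\cite{MR3813967}, applied to a curve of the form~\eqref{curva1} joining $(s,p)$ at one end to the Morrey endpoint $((sp-N)/p,\infty)$ at the other. A direct computation shows that, in the $(\widetilde s,1/\widetilde p)$-plane, this curve sweeps out exactly the upper boundary $\widetilde p=Np/(N-(s-\widetilde s)p)$ of~\eqref{insiemeimmersioneRNsp>N} in the subregime $(sp-N)/p<\widetilde s\le s$. Interpolation then yields
\[
\|u\|_{W^{\widetilde s,\widetilde p}(\R^N)}\le C\,\|u\|_{W^{s,p}(\R^N)}^{1-\theta}\,\|u\|_{W^{(sp-N)/p,\infty}(\R^N)}^{\theta},
\]
and combined with the endpoint estimate proves~\eqref{inequality1sp>N} along this curve. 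For targets strictly below this boundary, I would use either the monotonicity of the Gagliardo seminorm in $\widetilde s$ together with H\"older's inequality in $L^{\widetilde p}$ and the $L^\infty$-bound, or a further application of the Brezis--Mironescu interpolation between $(s,p)$ and an already-established boundary point $(\widetilde s^*,\widetilde p^*)$. For the subregime $0\le\widetilde s\le(sp-N)/p$, the same argument is run with $(0,\infty)$ playing the role of the Morrey endpoint, which covers all targets up to $\widetilde p=+\infty$.

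The monotonicity along $\gamma$ follows from a general reparameterization principle: the restriction $\gamma|_{[\theta_1,\theta_2]}$ is, after rescaling the parameter, again a curve of type~\eqref{curva1} joining $\gamma(\theta_1)$ to $\gamma(\theta_2)$, and $\gamma(\theta_2)$ lies in the admissible region~\eqref{insiemeimmersioneRNsp>N} associated with the new starting point $\gamma(\theta_1)$. Applying the unconditional embedding already proved, with $(s,p)$ replaced by $(s_{\theta_1},p_{\theta_1})$, then yields $W^{s_{\theta_1},p_{\theta_1}}(\R^N)\hookrightarrow W^{s_{\theta_2},p_{\theta_2}}(\R^N)$.

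The main obstacle I foresee is the careful bookkeeping across the critical threshold $\widetilde s=(sp-N)/p$, where the upper bound on $\widetilde p$ transitions from the Sobolev curve to $+\infty$: one must verify that the Brezis--Mironescu interpolation applies with the H\"older endpoint $W^{(sp-N)/p,\infty}(\R^N)$ and that the resulting constant depends only on the indicated parameters. A secondary technical point is that, when $\widetilde p=+\infty$, the target seminorm must be interpreted as a H\"older seminorm, so the interpolation bound has to be invoked in the form valid for $L^\infty$-based Sobolev spaces, and the constant must remain controlled as $\widetilde p\to+\infty$.
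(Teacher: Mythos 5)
Your overall architecture is the same as the paper's: extract the Morrey endpoint $W^{s,p}(\R^N)\hookrightarrow C^{0,(sp-N)/p}(\R^N)=W^{(sp-N)/p,\infty}(\R^N)$, use Brezis--Mironescu interpolation with this endpoint to walk along the Sobolev curve $\widetilde p=Np/(N-(s-\widetilde s)p)$, and then descend in $\widetilde s$ at fixed $\widetilde p$. The paper packages this into two small lemmas (Lemma~\ref{lemmaspr1} for the first step and Lemma~\ref{lemmaspr2} for the second, the latter being Brezis--Mironescu between $L^{\widetilde p}$ and $W^{\overline s,\widetilde p}$), and the monotonicity along $\gamma$ is handled exactly as you describe, via the reparameterization principle (Lemma~\ref{896364389bcnxmnzfejEERRTC00TER}).

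However, the handling of the subregime $0\le\widetilde s\le(sp-N)/p$ as you describe it does not go through. You propose running ``the same argument with $(0,\infty)$ playing the role of the Morrey endpoint''. The Brezis--Mironescu curve from $(s,p)$ to $(0,\infty)$ is
\[
s_\theta=(1-\theta)s,\qquad p_\theta=\frac{p}{1-\theta},\qquad\text{i.e.}\quad s_\theta\,p_\theta=sp,
\]
a hyperbola, which is not the upper boundary of the rectangle $\{0\le\widetilde s\le(sp-N)/p,\ p\le\widetilde p\le+\infty\}$. At a fixed $\widetilde p\in[p,+\infty)$ this curve passes through $s_\theta=sp/\widetilde p$, and when $\widetilde p>sp^2/(sp-N)$ one has $sp/\widetilde p<(sp-N)/p$, so there exist admissible $\widetilde s\in[0,(sp-N)/p]$ with $\widetilde s>s_\theta$. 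Concretely, with $N=2$, $s=1$, $p=4$ (so $(sp-N)/p=1/2$) take $\widetilde s=1/2$, $\widetilde p=10$: your interpolation gives $W^{1,4}\hookrightarrow W^{2/5,10}$, and you cannot then pass to $W^{1/2,10}$ since the smoothness index must decrease, not increase. The fix is to forget the $(0,\infty)$ endpoint entirely: for every finite $\widetilde p\in[p,+\infty)$ and any $\widetilde s$, set $\overline s:=s-N(1/p-1/\widetilde p)\in((sp-N)/p,s]$; then $(\overline s,\widetilde p)$ lies on the Sobolev curve obtained by interpolating with the Morrey endpoint $((sp-N)/p,\infty)$, and one checks $\overline s\ge\widetilde s$ in both subregimes (this is exactly the inequality~\eqref{poiuytre7843887654eewfgcvyu23} in the paper), after which the descent $W^{\overline s,\widetilde p}\hookrightarrow W^{\widetilde s,\widetilde p}$ follows from Brezis--Mironescu between $L^{\widetilde p}$ and $W^{\overline s,\widetilde p}$. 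The case $\widetilde p=+\infty$, $\widetilde s\le(sp-N)/p$ is then treated separately via the Morrey embedding and the trivial inclusion of H\"older spaces. A secondary caution: on $\R^N$ the ``monotonicity of the Gagliardo seminorm in $\widetilde s$'' is not a pointwise inequality between integrands (the long-range part goes the wrong way); it requires either the short/long-range split together with an $L^{\widetilde p}$ bound, or, as the paper does, an interpolation with the $L^{\widetilde p}$ endpoint.
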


\begin{figure}[h]
\begin{center}
\includegraphics[scale=.28]{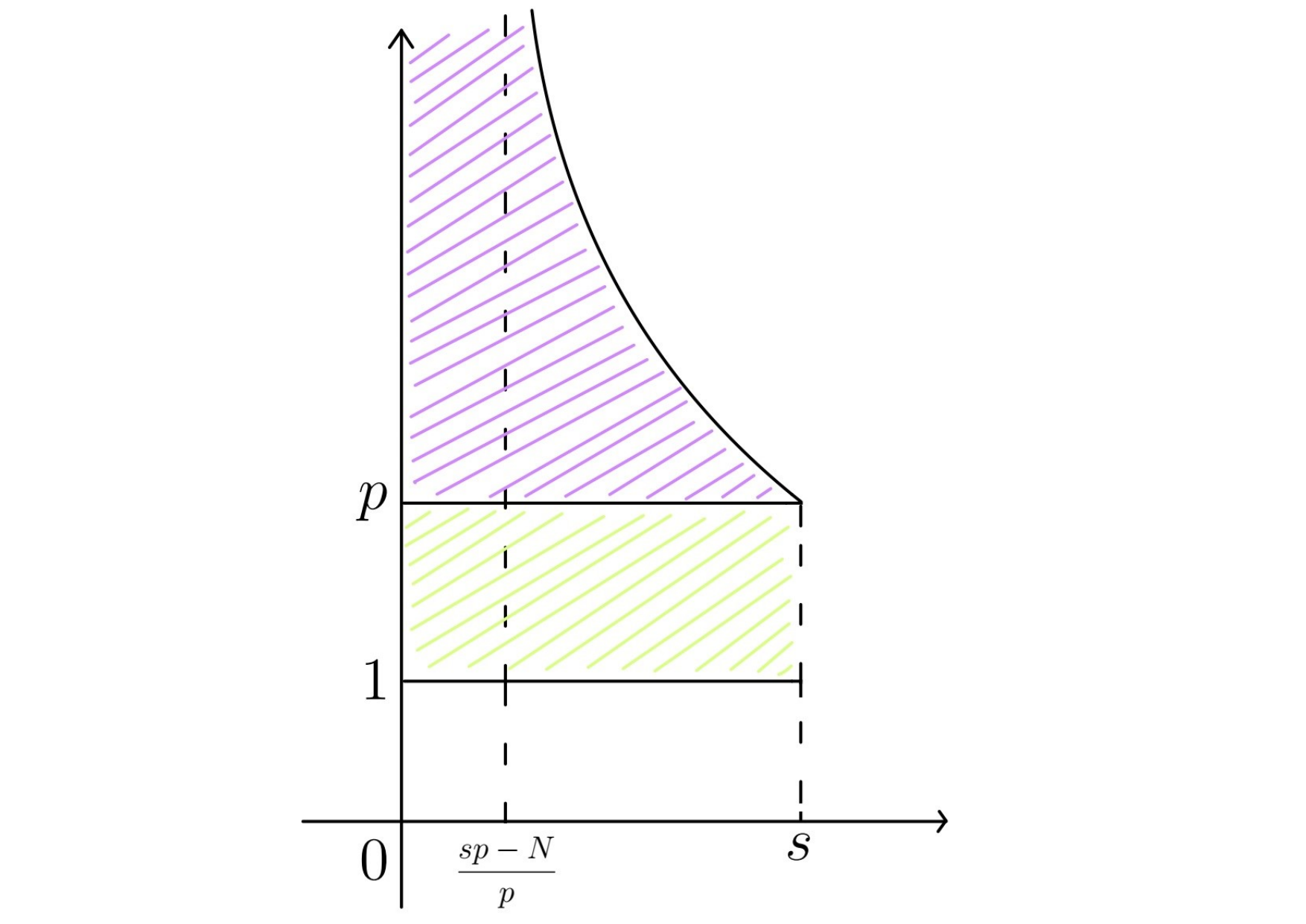}
\end{center}
\caption{In purple the set of points satisfying~\eqref{insiemeimmersioneRNsp>N}. In both purple and yellow the set of points satisfying~\eqref{insiemeimmersioneOmegasp>N}. The set of points satisfying~\eqref{2insiemeimmersioneOmegasp>N} coincides with the set of points satisfying~\eqref{insiemeimmersioneOmegasp>N} except for the curve~$\widetilde p= \dfrac{Np}{N-(s-\widetilde s)p}$.}
\label{sp>N}
\end{figure}

When~$\Omega\subset \R^N$ is an open and bounded domain with Lipschitz boundary, we have the following:

\begin{theorem}\label{teorema2sp>N}
Let~$s\in (0,1]$ and~$p\in [1, +\infty]$ be such that~$sp>N$.
Let~$\Omega\subset \R^N$ be an open and bounded domain with Lipschitz boundary.
Let~$\widetilde{s}$ and~$\widetilde{p}$ satisfy\footnote{See also footnote~\ref{footnotetrivial}.}
\begin{equation}\label{insiemeimmersioneOmegasp>N}
{\mbox{either }}\quad
\begin{cases}
0\le \widetilde s\le \dfrac{sp-N}{p},\\
1\le \widetilde p\le +\infty,
\end{cases}
\quad\mbox{ or } \quad \begin{cases}
 \dfrac{sp-N}{p}< \widetilde s< s,\\
1\le \widetilde p\le \dfrac{Np}{N-(s-\widetilde s)p}.
\end{cases}
\end{equation}

Then, there exists a positive constant~$C=C(N,s,p,\widetilde{s},\widetilde{p}, \Omega)$ such that, for any~$u\in W^{s,p}(\Omega)$,
\begin{equation}\label{2inequality1sp>N}
\|u\|_{W^{\widetilde{s},\widetilde{p}}(\Omega)}
\leq C\|u\|_{W^{s,p}(\Omega)},
\end{equation}
namely, the space~$W^{s,p}(\Omega)$ is continuously embedded in~$W^{\widetilde{s},\widetilde{p}}(\Omega)$.

Moreover, the curve~$\gamma$ defined in~\eqref{curva1}
is such that, if~$0\le \theta_1 \le \theta_2 \le 1$, 
then the space $W^{s_{\theta_1},p_{\theta_1}}(\Omega)$
is continuously embedded in~$W^{s_{\theta_2},p_{\theta_2}}(\Omega)$.
\end{theorem}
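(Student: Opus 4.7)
My plan is to reduce the embedding on the bounded Lipschitz domain $\Omega$ to its $\R^N$ counterpart, Theorem~\ref{teorema1sp>N}, via the classical extension-restriction argument, and then to cover the part of~\eqref{insiemeimmersioneOmegasp>N} with $\widetilde p<p$ by a Hölder descent on the Gagliardo seminorm that exploits $|\Omega|<\infty$. The starting point is a bounded linear extension operator $E\colon W^{s,p}(\Omega)\to W^{s,p}(\R^N)$ with $(Eu)|_\Omega=u$: for $s=1$ this is Calderón/Stein, for $s\in(0,1)$ one can use the construction in~\cite{MR2944369}, and for $s=1$, $p=+\infty$ the McShane--Whitney Lipschitz extension suffices.

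When $(\widetilde s,\widetilde p)$ satisfies~\eqref{insiemeimmersioneOmegasp>N} and additionally $\widetilde p\ge p$, the pair also lies in the $\R^N$-admissible set~\eqref{insiemeimmersioneRNsp>N}, so Theorem~\ref{teorema1sp>N} applied to $Eu$ followed by restriction to $\Omega$ yields
\[
\|u\|_{W^{\widetilde s,\widetilde p}(\Omega)}
\le\|Eu\|_{W^{\widetilde s,\widetilde p}(\R^N)}
\le C\|Eu\|_{W^{s,p}(\R^N)}
\le C\|u\|_{W^{s,p}(\Omega)}.
\]
To handle the remaining range $1\le\widetilde p<p$, I would factor the embedding through the intermediate pair $(\widetilde s,p)$: this pair still lies in~\eqref{insiemeimmersioneRNsp>N}, since the condition $p\le Np/(N-(s-\widetilde s)p)$ is trivially fulfilled for $\widetilde s\le s$, so the previous step delivers $u\in W^{\widetilde s,p}(\Omega)$ with continuous dependence. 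A Hölder inequality with conjugate exponents $p/\widetilde p$ and $p/(p-\widetilde p)$ applied to the Gagliardo integral on $\Omega\times\Omega$ then gives
\[
\iint_{\Omega\times\Omega}\frac{|u(x)-u(y)|^{\widetilde p}}{|x-y|^{N+\widetilde s\widetilde p}}\,dx\,dy
\le|\Omega|^{2(p-\widetilde p)/p}\left(\iint_{\Omega\times\Omega}\frac{|u(x)-u(y)|^{p}}{|x-y|^{N+\widetilde s p}}\,dx\,dy\right)^{\widetilde p/p},
\]
which, combined with $\|u\|_{L^{\widetilde p}(\Omega)}\le|\Omega|^{1/\widetilde p-1/p}\|u\|_{L^p(\Omega)}$, completes the proof of~\eqref{2inequality1sp>N}.

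The monotonicity along $\gamma$ follows by applying, for any $0\le\theta_1\le\theta_2\le1$, the appropriate continuous embedding theorem (Theorem~\ref{teorema2}, Theorem~\ref{teorema2sp=N}, or the main part of the present Theorem~\ref{teorema2sp>N}, depending on whether $s_{\theta_1}p_{\theta_1}$ is less than, equal to, or greater than $N$) to pass from $(s_{\theta_1},p_{\theta_1})$ to $(s_{\theta_2},p_{\theta_2})$. Since $s_\theta$ and $1/p_\theta$ are both affine in $\theta$, the slope $(1/p_{\theta_1}-1/p_{\theta_2})/(s_{\theta_1}-s_{\theta_2})$ is constant along the curve and equals $(1/p-1/\widetilde p)/(s-\widetilde s)$, which by the standing assumption $\widetilde p\le Np/(N-(s-\widetilde s)p)$ is $\le1/N$; this is precisely the admissibility of $(s_{\theta_2},p_{\theta_2})$ relative to $(s_{\theta_1},p_{\theta_1})$.

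The principal obstacle I anticipate is the borderline subcase $\widetilde p<p$ with $\widetilde s$ close to $(sp-N)/p$: a direct Hölder descent from $(s,p)$ to $(\widetilde s,\widetilde p)$, simultaneously lowering both exponents, would produce a logarithmically divergent factor of the form $\iint|x-y|^{-N}\,dx\,dy$ on $\Omega\times\Omega$. Routing the descent through $(\widetilde s,p)$, so that the fractional exponent is fixed before Hölder is applied, is what circumvents this borderline divergence, at the cost of a constant depending on $|\Omega|$.
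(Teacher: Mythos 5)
There is a genuine gap in the H\"older step for the range $1\le\widetilde p<p$, and your own anticipation of the ``borderline divergence'' is exactly backwards. With the split you propose, namely
\[
\frac{|u(x)-u(y)|^{\widetilde p}}{|x-y|^{N+\widetilde s\widetilde p}}
=\left(\frac{|u(x)-u(y)|^{p}}{|x-y|^{N+\widetilde s p}}\right)^{\widetilde p/p}\cdot |x-y|^{-N(p-\widetilde p)/p},
\]
H\"older with exponents $p/\widetilde p$ and $p/(p-\widetilde p)$ produces the factor
\[
\left(\;\iint_{\Omega\times\Omega}\frac{dx\,dy}{|x-y|^{N}}\right)^{(p-\widetilde p)/p},
\]
and this double integral is \emph{divergent} (the inner integral behaves like $\int_0 r^{-1}\,dr$). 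Your displayed constant $|\Omega|^{2(p-\widetilde p)/p}$ would only appear if the residual kernel were $1$, which would require $p\le\widetilde p$. So routing through $(\widetilde s,p)$ and then fixing $\widetilde s$ while lowering the integrability exponent is precisely what creates the exponent-$N$ kernel; it does not circumvent it.

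The fix is the opposite of what you describe: one must descend \emph{directly} from $(s,p)$ to $(\widetilde s,\widetilde p)$, using the \emph{strict} gap $s-\widetilde s>0$ in the scaling exponent, as in the paper's Lemma~\ref{immersionidisotto}. Splitting
\[
\frac{|u(x)-u(y)|^{\widetilde p}}{|x-y|^{N+\widetilde s\widetilde p}}
=\frac{|u(x)-u(y)|^{\widetilde p}}{|x-y|^{(N+sp)\widetilde p/p}}\cdot
\frac{1}{|x-y|^{N(p-\widetilde p)/p-\widetilde p(s-\widetilde s)}}
\]
and applying the same H\"older inequality, the residual kernel has exponent
\[
N-\frac{p\widetilde p(s-\widetilde s)}{p-\widetilde p}<N,
\]
which is integrable over $\Omega\times\Omega$ precisely because $\widetilde s<s$; note that both alternatives of~\eqref{insiemeimmersioneOmegasp>N} guarantee $\widetilde s<s$, since $(sp-N)/p<s$. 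This is the core content of Lemma~\ref{immersionidisotto}, which the paper invokes at this point instead of the intermediate stop at $(\widetilde s,p)$.

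On the rest: your extension-restriction reduction for $\widetilde p\ge p$ is a legitimate alternative route, but it brings in the machinery of a bounded extension operator $W^{s,p}(\Omega)\to W^{s,p}(\R^N)$ for Lipschitz domains, whereas the paper deliberately stays on $\Omega$, proving Theorems~\ref{teorema1sp>N} and~\ref{teorema2sp>N} simultaneously by applying Theorem~\ref{THMbrezismironuescu} directly on~$\Omega$ through Lemmata~\ref{lemmaspr1} and~\ref{lemmaspr2} (interpolating between $W^{s,p}(\Omega)$ and $C^{0,(sp-N)/p}(\Omega)$, then between $W^{\overline s,\widetilde p}(\Omega)$ and $L^{\widetilde p}(\Omega)$). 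Your argument for the monotonicity along~$\gamma$ via the constancy of the slope $(1/p_{\theta_1}-1/p_{\theta_2})/(s_{\theta_1}-s_{\theta_2})$ captures the essential computation, but it should be stated as a verification that $(s_{\theta_2},p_{\theta_2})$ lies in the admissible set relative to $(s_{\theta_1},p_{\theta_1})$ for the appropriate regime; the paper isolates this as Lemma~\ref{896364389bcnxmnzfejEERRTC00TER}, and something equivalent must be proved before one is allowed to invoke the first part of the theorem with the shifted parameters.
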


The following compact embedding holds true:

\begin{theorem}\label{teorema3sp>N}
Let~$s\in (0,1]$ and~$p\in [1, +\infty]$ be such that~$sp>N$.
Let~$\Omega\subset \R^N$ be an open and bounded domain with Lipschitz boundary. Let~$\widetilde{s}$ and~$\widetilde{p}$ satisfy
\begin{equation}\label{2insiemeimmersioneOmegasp>N}
{\mbox{either }}\quad
\begin{cases}
0\le \widetilde s\le \dfrac{sp-N}{p},\\
1\le \widetilde p\le +\infty,
\end{cases}
\quad\mbox{ or } \quad \begin{cases}
 \dfrac{sp-N}{p}< \widetilde s<s,\\
1\le \widetilde p< \dfrac{Np}{N-(s-\widetilde s)p}.
\end{cases}
\end{equation}

Then, the space~$W^{s,p}(\Omega)$ is compactly embedded in~$W^{\widetilde{s},\widetilde{p}}(\Omega)$.

Moreover, the curve~$\gamma$ defined in~\eqref{curva1}
is such that, if~$0< \theta_1 < \theta_2 \le 1$, 
then the space $W^{s_{\theta_1},p_{\theta_1}}(\Omega)$
is compactly embedded in~$W^{s_{\theta_2},p_{\theta_2}}(\Omega)$.
\end{theorem}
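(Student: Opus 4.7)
The plan is to mirror the strategy used for the compactness statements in the ranges $sp<N$ and $sp=N$ (Theorem~\ref{teorema3} and Theorem~\ref{teorema3sp=N}): couple the continuous embedding of Theorem~\ref{teorema2sp>N} with a classical compactness result at one endpoint, and then use the Brezis--Mironescu interpolation inequality of~\cite{MR3813967} to transport compactness to the target pair $(\widetilde s,\widetilde p)$.

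The classical ingredient is automatic here. Since $sp>N$ and $\Omega$ is a bounded Lipschitz domain, the Morrey-type embedding combined with the Arzel\`a--Ascoli theorem yields $W^{s,p}(\Omega)\hookrightarrow\hookrightarrow C(\overline{\Omega})$, and therefore $W^{s,p}(\Omega)\hookrightarrow\hookrightarrow L^{q}(\Omega)$ for every $q\in[1,+\infty]$. This already settles the case $\widetilde s=0$ in~\eqref{2insiemeimmersioneOmegasp>N}. For $\widetilde s>0$ I would pick a bounded sequence $\{u_n\}\subset W^{s,p}(\Omega)$, extract a subsequence convergent in $L^{q}(\Omega)$ for every finite $q$, and exploit the strict inequality on $\widetilde p$ (in the second alternative) or the slack $\widetilde s<(sp-N)/p$ (in the first alternative) to choose an auxiliary pair $(s_*,p_*)$ with $\widetilde s<s_*<s$ still inside the continuous embedding region of Theorem~\ref{teorema2sp>N}. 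A convenient choice when $\widetilde p$ is finite is $p_*=\widetilde p$, so that the curve~\eqref{curva1} joining $(0,\widetilde p)$ and $(s_*,\widetilde p)$ passes through $(\widetilde s,\widetilde p)$ at $\theta=\widetilde s/s_*\in(0,1)$; applied to $u_n-u_m$, the Brezis--Mironescu interpolation inequality then produces
\begin{equation*}
\|u_n-u_m\|_{W^{\widetilde s,\widetilde p}(\Omega)}\le C\,\|u_n-u_m\|_{L^{\widetilde p}(\Omega)}^{1-\theta}\,\|u_n-u_m\|_{W^{s_*,\widetilde p}(\Omega)}^{\theta},
\end{equation*}
whose right-hand side vanishes because the first factor does and the second remains bounded. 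Hence $\{u_n\}$ is Cauchy in $W^{\widetilde s,\widetilde p}(\Omega)$, which gives the compact embedding. The ``moreover'' statement about the curve $\gamma$ is then obtained by translating the strict inequality $\theta_1<\theta_2$ into the required strict inequality for the target pair, and applying to $(s_{\theta_1},p_{\theta_1})\to(s_{\theta_2},p_{\theta_2})$ whichever of Theorem~\ref{teorema3}, Theorem~\ref{teorema3sp=N}, or the statement just proved matches the sign of $s_{\theta_1}p_{\theta_1}-N$.

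The main difficulty I expect is the admissibility check for the auxiliary data $(s_*,p_*)$ and $\theta$ in the Brezis--Mironescu inequality, particularly at the boundary value $\widetilde s=(sp-N)/p$ and at the endpoint $\widetilde p=+\infty$ of the first alternative of~\eqref{2insiemeimmersioneOmegasp>N}. The former forces $s_*$ to be chosen slightly above $(sp-N)/p$, so that $\frac{Np}{N-(s-s_*)p}$ becomes finite and still dominates $\widetilde p$. The latter has to be read, following the convention of the footnote to Theorem~\ref{teorema1sp>N}, as $W^{\widetilde s,\infty}(\Omega)=C^{0,\widetilde s}(\overline{\Omega})$, and is handled directly via the classical compactness of $W^{s,p}(\Omega)$ into H\"older spaces of exponent strictly less than $s-N/p$, by picking any such exponent in the interval $(\widetilde s,s-N/p)$.
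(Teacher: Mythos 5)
Your proposal is correct and takes essentially the same route as the paper: a continuous embedding $W^{s,p}(\Omega)\hookrightarrow W^{s_*,\widetilde p}(\Omega)$ from Theorem~\ref{teorema2sp>N}, then Brezis--Mironescu interpolation against a compactly-embedded endpoint --- precisely the mechanism the paper packages as Lemma~\ref{orizzontalecompatto}, which you inline, substituting the stronger Morrey/Arzel\`a--Ascoli compactness $W^{s,p}(\Omega)\hookrightarrow\hookrightarrow L^{\widetilde p}(\Omega)$ (available since $sp>N$) for the Rellich--Kondrachov ingredient used inside that lemma. Your caution at the corner $\widetilde p=+\infty$, $\widetilde s=(sp-N)/p$ is well founded: that pair lies on the critical scaling curve $\widetilde s-N/\widetilde p=s-N/p$ where Proposition~\ref{Thm10optimal} shows compactness actually fails, and the paper's own proof of this theorem quietly confines itself to $\widetilde p\in(p,+\infty)$ or $\widetilde p<p$, so the exclusion of that endpoint from~\eqref{2insiemeimmersioneOmegasp>N} that your argument implicitly forces is the correct reading of the statement.
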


The set of point satisfying~\eqref{insiemeimmersioneRNsp>N}, \eqref{insiemeimmersioneOmegasp>N} and~\eqref{2insiemeimmersioneOmegasp>N} is illustrated in Figure~\ref{sp>N}.

\subsection{Organization of the paper}
The paper is organized as follows. In Section~\ref{optimality} we discuss the assumptions in Theorems~\ref{teorema1}, \ref{teorema2}, \ref{teorema1sp=N}, \ref{teorema2sp=N}, \ref{teorema3sp=N}, \ref{teorema1sp>N}, \ref{teorema2sp>N} and~\ref{teorema3sp>N} and Corollary~\ref{corollariocompattezza}, showing that they are optimal. In Section~\ref{sectionauxiliary} we provide some auxiliary results needed for the proofs of the continuous and compact embeddings. Finally,
in Section~\ref{sectionproofs} we prove our embedding results by addressing separately the cases~$sp<N$, $sp=N$ and~$sp>N$.

\section{Optimality of the embeddings}\label{optimality}

In this section we discuss the optimality of the results stated in this paper. To this end, we will present some preliminary results which show that, with appropriate choices of~$\widetilde s$ and~$\widetilde p$, the space $W^{s,p}(\Omega)$ is not continuously embedded in $W^{\widetilde s,\widetilde p}(\Omega)$, being~$\Omega$ as in~\eqref{Omega}.

Throughout the rest of the section,
we recall that we use the notations $W^{{s}, \infty}(\Omega)=C^{0,{s}}(\Omega)$ and $W^{0,\infty}(\Omega)=L^\infty(\Omega)$.

\subsection{Some scaling properties}
To start with, we provide some useful scaling properties:

\begin{lemma}\label{lemmino0}
Let~$u:\R^N\to\R$ be any measurable function and~$\gamma$, $\beta\in\R$. For any~$\varepsilon\in(0,1)$, let~$v_{\varepsilon, \gamma, \beta}(x):= \varepsilon^\gamma u(\varepsilon^\beta x)$.

Then, for any~$s\in (0,1]$ and~$p\in[1,+\infty)$, 
\begin{equation}\label{lemmino01}
\|v_{\varepsilon, \gamma,\beta}\|_{W^{s, p}(\R^N)} = 
\varepsilon^{\gamma-\frac{\beta N}{p}+\beta s}
\left(\varepsilon^{-\beta sp}\|u\|^p_{L^p(\R^N)} 
+ [u]^p_{W^{s, p}(\R^N)} \right)^{\frac1p}.
\end{equation}

Furthermore, if~$p=+\infty$, for any~$s\in (0,1)$, 
\begin{equation}\label{lemmino01bis}
\|v_{\varepsilon, \gamma,\beta}\|_{W^{s, p}(\R^N)}=
\|v_{\varepsilon, \gamma,\beta}\|_{C^{0, s}(\R^N)}=
\varepsilon^\gamma \|u\|_{L^\infty(\R^N)}+
\varepsilon^{\gamma+\beta s}
\sup_{{x,y\in \R^N}\atop{x\neq y}}\frac{|u(x)-u(y)|}{|x-y|^s}.
\end{equation}
\end{lemma}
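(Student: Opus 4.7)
The plan is to establish both identities by a direct change of variables. For \eqref{lemmino01}, I would compute the $L^p$ norm and the seminorm of $v_{\varepsilon,\gamma,\beta}$ separately, and then combine them via the definition~\eqref{normaWsp}. The substitution $y=\varepsilon^{\beta}x$ in the integral defining $\|v_{\varepsilon,\gamma,\beta}\|_{L^p(\R^N)}^p$ pulls out a factor $\varepsilon^{\gamma p-\beta N}$, giving $\|v_{\varepsilon,\gamma,\beta}\|_{L^p(\R^N)}=\varepsilon^{\gamma-\beta N/p}\|u\|_{L^p(\R^N)}$.

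For the Gagliardo seminorm with $s\in(0,1)$, I would apply the double substitution $x'=\varepsilon^{\beta}x$, $y'=\varepsilon^{\beta}y$. The two Jacobians contribute $\varepsilon^{-2\beta N}$, while $|x-y|^{N+sp}=\varepsilon^{-\beta(N+sp)}|x'-y'|^{N+sp}$ contributes $\varepsilon^{\beta(N+sp)}$ to the numerator; together with the $\varepsilon^{\gamma p}$ coming from $v_{\varepsilon,\gamma,\beta}=\varepsilon^\gamma u(\varepsilon^\beta\cdot)$ this yields $[v_{\varepsilon,\gamma,\beta}]_{W^{s,p}(\R^N)}^p=\varepsilon^{\gamma p+\beta sp-\beta N}[u]_{W^{s,p}(\R^N)}^p$. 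The case $s=1$ produces the very same exponent, since $\nabla v_{\varepsilon,\gamma,\beta}(x)=\varepsilon^{\gamma+\beta}\nabla u(\varepsilon^\beta x)$ and a single change of variables in the $L^p$ integral of the gradient gives the factor $\varepsilon^{(\gamma+\beta)p-\beta N}=\varepsilon^{\gamma p+\beta sp-\beta N}$. Substituting into $\|v_{\varepsilon,\gamma,\beta}\|_{W^{s,p}(\R^N)}^p=[v_{\varepsilon,\gamma,\beta}]_{W^{s,p}(\R^N)}^p+\|v_{\varepsilon,\gamma,\beta}\|_{L^p(\R^N)}^p$ and factoring $\varepsilon^{(\gamma+\beta s-\beta N/p)p}$ out, the $L^p$ contribution acquires the extra weight $\varepsilon^{-\beta sp}$, and taking $p$-th roots gives exactly \eqref{lemmino01}.

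For \eqref{lemmino01bis}, the $L^\infty$ part is immediate, $\|v_{\varepsilon,\gamma,\beta}\|_{L^\infty(\R^N)}=\varepsilon^{\gamma}\|u\|_{L^\infty(\R^N)}$. For the H\"older seminorm, the substitution $x'=\varepsilon^{\beta}x$, $y'=\varepsilon^{\beta}y$ turns $|x-y|^{-s}$ into $\varepsilon^{\beta s}|x'-y'|^{-s}$, producing the prefactor $\varepsilon^{\gamma+\beta s}$. Summing the two pieces in agreement with the definition of the $C^{0,s}$ norm (recall $W^{s,\infty}(\R^N)=C^{0,s}(\R^N)$) gives the stated identity.

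There is no genuine obstacle: the lemma is a mechanical scaling computation. The only minor bookkeeping is to treat separately the cases $s\in(0,1)$ (Gagliardo integral) and $s=1$ (gradient), and $p<+\infty$ (integral norms) versus $p=+\infty$ (supremum norms), checking in each case that the resulting exponents of $\varepsilon$ agree with the formulas in the statement.
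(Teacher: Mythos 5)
Your proof is correct and takes essentially the same route as the paper: a direct change of variables for each piece of the norm ($L^p$ part, Gagliardo or gradient seminorm, $L^\infty$ part, H\"older seminorm) followed by factoring out the common power of $\varepsilon$. The one small thing you do beyond the paper's displayed computations is to treat the case $s=1$ explicitly via $\nabla v_{\varepsilon,\gamma,\beta}=\varepsilon^{\gamma+\beta}\nabla u(\varepsilon^\beta\cdot)$, which is a welcome bit of completeness but not a different argument.
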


\begin{proof}
We first prove~\eqref{lemmino01}. A suitable change of variables leads to
\[
\|v_{\varepsilon, \gamma,\beta}\|^p_{L^p(\R^N)}= \varepsilon^{\gamma p}\int_{\R^N} 
| u(\varepsilon^\beta x)|^p\, dx = \varepsilon^{\gamma p -\beta N}
\|u\|^p_{L^p(\R^N)}
\]
and
\begin{equation}\label{staruno}
[v_{\varepsilon, \gamma,\beta}]^p_{W^{s, p}(\R^N)}
= \varepsilon^{\gamma p} 
\iint_{\R^{2N}} \frac{|u(\varepsilon^\beta x) - u(\varepsilon^\beta y)|^p}{|x-y|^{N+sp}}\, dx \,dy = \varepsilon^{\gamma p -\beta N +\beta sp} [u]^p_{W^{s, p}(\R^N)}.
\end{equation}
Recalling the definition in~\eqref{normaWsp}, we get~\eqref{lemmino01}, as desired.

In order to prove~\eqref{lemmino01bis}, we observe that
\begin{equation}\label{stardue}
\|v_{\varepsilon, \gamma,\beta}\|_{L^\infty(\R^N)}
=\varepsilon^\gamma \|u\|_{L^\infty(\R^N)}
\end{equation}
and 
\begin{equation}\label{startre}\begin{split}
\sup_{{x,y\in \R^N}\atop{x\neq y}}
\frac{|v_{\varepsilon, \gamma,\beta}(x)-v_{\varepsilon, \gamma,\beta}(y)|}{|x-y|^s}
=\;&\varepsilon^{\gamma+\beta s}
\sup_{{x,y\in \R^N}\atop{x\neq y}}\frac{|u(\varepsilon^\beta x)-u(\varepsilon^\beta y)|}{
|\varepsilon^\beta x-\varepsilon^\beta y|^s}
\\= \;&\varepsilon^{\gamma+\beta s}
\sup_{{x,y\in \R^N}\atop{x\neq y}}
\frac{|u(x)-u(y)|}{|x-y|^s},
\end{split}\end{equation}
as desired.
\end{proof}

\begin{lemma}\label{lemmino0Omega}
Let~$\Omega\subset\R^N$ be an open and bounded domain with Lipschitz boundary. Let~$x_0\in \Omega$ and~$R>0$ be such that~$B_{R}(x_0)\subset \Omega$. Let~$\gamma\in\R$.

Let~$u:\R^N\to \R$ be any measurable function such that~$u=0$ in~$\R^N\setminus B_R(x_0)$. 

For any~$\varepsilon\in (0,1)$, let
\[
v_{\varepsilon, \gamma}(x):= \varepsilon^\gamma u\left(x_0+\frac{x}{\varepsilon}\right).
\]

Then, for any~$s\in (0,1]$ and~$p\in[1,+\infty)$, 
\begin{equation}\label{agg00}
\varepsilon^{\gamma p + N - sp} [u]^p_{W^{s, p}(B_R(x_0))}\le
[v_{\varepsilon, \gamma}]^p_{W^{s, p}(\Omega)}
\le \varepsilon^{\gamma p + N - sp} [u]^p_{W^{s, p}(\R^N)}
\end{equation}
and
\begin{equation}\label{lemmino01Omega}
\begin{aligned}
\varepsilon^{\gamma+\frac{N}{p}-s} &\left(\varepsilon^{sp}\|u\|^p_{L^p(\Omega)} + [u]^p_{W^{s, p}(B_R(x_0))} \right)^{\frac1p}
\leq \|v_{\varepsilon, \gamma}\|_{W^{s, p}(\Omega)}  \\
&\leq \varepsilon^{\gamma+\frac{N}{p}-s} \left(\varepsilon^{sp}\|u\|^p_{L^p(\Omega)} + [u]^p_{W^{s, p}(\R^N)} \right)^{\frac1p}.
\end{aligned}
\end{equation}

Furthermore, if~$p=+\infty$, for any~$s\in (0,1)$,
\begin{equation}\label{agg11}
\epsilon^{\gamma-s}
\sup_{{x,y\in B_R(x_0)}\atop{x\neq y}}\frac{\left|u(x)-u(y)\right|}{|x-y|^s}
\le
\sup_{{x,y\in \Omega}\atop{x\ne y}}\frac{|v_{\varepsilon, \gamma}(x)-v_{\varepsilon, \gamma}(y)|}{|x-y|^s}\le
\epsilon^{\gamma-s}
\sup_{{x,y\in \R^N}\atop{x\neq y}}\frac{\left|u(x)-u(y)\right|}{|x-y|^s}
\end{equation}
and
\begin{equation}\label{lemmino01bisOmega}
\begin{aligned}
&\varepsilon^\gamma \|u\|_{L^\infty(\Omega)}+\varepsilon^{\gamma-s}\sup_{\substack{x,y\in B_R \\x\ne y}}\frac{|u(x)-u(y)|}{|x-y|^s}\\
&\qquad\le\|v_{\varepsilon, \gamma}\|_{W^{s, p}(\Omega)}= \|v_{\varepsilon, \gamma}\|_{C^{0, s}(\Omega)}\\
&\qquad\le \varepsilon^\gamma \|u\|_{L^\infty(\Omega)}+\varepsilon^{\gamma-s}\sup_{\substack{x,y\in \R^N \\x\ne y}}\frac{|u(x)-u(y)|}{|x-y|^s}.
\end{aligned}
\end{equation}
\end{lemma}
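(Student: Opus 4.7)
The plan is to mimic the proof of Lemma~\ref{lemmino0}: perform the change of variables that inverts the definition of $v_{\varepsilon,\gamma}$, then keep careful track of the Jacobian and the powers of $\varepsilon$. The new ingredient, compared to Lemma~\ref{lemmino0}, is that the norms are taken on the bounded set $\Omega$ rather than on $\R^N$, which I would handle by combining the change of variables with the hypothesis that $u$ vanishes outside $B_R(x_0)\subset\Omega$.

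After the substitution, the variables of integration range over the image $T_\varepsilon$ of $\Omega$ under the rescaling. The key observation, exploiting $\varepsilon\in(0,1)$ together with $B_R(x_0)\subset\Omega$, is the double inclusion $B_R(x_0)\subseteq T_\varepsilon\subseteq\R^N$; this follows because the rescaling shrinks points of $B_R(x_0)$ into $B_{\varepsilon R}(x_0)\subset B_R(x_0)\subset\Omega$.

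For the $L^p$ piece, the substitution yields $\|v_{\varepsilon,\gamma}\|_{L^p(\Omega)}^p = \varepsilon^{\gamma p + N}\int_{T_\varepsilon}|u|^p$, which, because $u$ is supported in $B_R(x_0)\subseteq T_\varepsilon$, collapses to the exact equality $\varepsilon^{\gamma p + N}\|u\|_{L^p(\Omega)}^p$. For the Gagliardo seminorm, the substitution produces the prefactor $\varepsilon^{\gamma p + N - sp}$ multiplying $\iint_{T_\varepsilon\times T_\varepsilon}|u(\xi)-u(\eta)|^p/|\xi-\eta|^{N+sp}\,d\xi\,d\eta$. Using the elementary monotonicity principle---if $A\subseteq B$ and $u\equiv 0$ outside $A$, then $[u]^p_{W^{s,p}(A)}\le [u]^p_{W^{s,p}(B)}$, since the extra contribution over $B\setminus A$ is non-negative---applied first with $A=B_R(x_0),\ B=T_\varepsilon$ and then with $A=T_\varepsilon,\ B=\R^N$, I sandwich the above double integral between $[u]^p_{W^{s,p}(B_R(x_0))}$ and $[u]^p_{W^{s,p}(\R^N)}$, giving \eqref{agg00}. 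Adding the $L^p$ contribution and taking $p$-th roots then delivers \eqref{lemmino01Omega}.

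The case $p=+\infty$, $s\in(0,1)$ is entirely analogous: by the support condition $\|v_{\varepsilon,\gamma}\|_{L^\infty(\Omega)}=\varepsilon^\gamma \|u\|_{L^\infty(\Omega)}$, while the H\"older seminorm rescales by $\varepsilon^{\gamma-s}$ and is sandwiched between its values on $B_R(x_0)$ and on $\R^N$ by the same monotonicity of suprema. This yields \eqref{agg11} and, summing, \eqref{lemmino01bisOmega}. I do not expect any substantive obstacle: the only point needing verification is the inclusion $B_R(x_0)\subseteq T_\varepsilon$, which is immediate from $\varepsilon<1$ and $B_R(x_0)\subset\Omega$; everything else is bookkeeping parallel to Lemma~\ref{lemmino0}.
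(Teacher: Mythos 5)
Your proposal is essentially the same argument as the paper's. Both rescale the Gagliardo (resp.\ H\"older) seminorm by the change of variables that undoes the dilation, and then squeeze the resulting double integral over the rescaled copy $T_\varepsilon$ of $\Omega$ between its values over $B_R$ and over $\R^N$ by monotonicity of the double integral in its domain; the paper merely packages the two bounds asymmetrically (the lower one by restricting the integral to $B_{\varepsilon R}\times B_{\varepsilon R}$ and changing variables there, the upper one by comparing with $\R^N$ and invoking Lemma~\ref{lemmino0}), but the underlying computation is the same.

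One small slip needs fixing: the inverse of $x\mapsto x_0+x/\varepsilon$ is $\xi\mapsto\varepsilon(\xi-x_0)$, which maps $B_R(x_0)$ onto $B_{\varepsilon R}(0)$, \emph{not} onto $B_{\varepsilon R}(x_0)$ as you wrote. Thus the inclusion $B_R(x_0)\subseteq T_\varepsilon$ requires $B_{\varepsilon R}(0)\subset\Omega$, which does not follow directly from $B_R(x_0)\subset\Omega$; you need the preliminary normalisation $x_0=0$ (up to a translation), which is precisely the paper's opening step. Once that is inserted, the double inclusion $B_R\subseteq T_\varepsilon\subseteq\R^N$ holds and the rest of your bookkeeping goes through without further changes.
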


\begin{proof}
Up to a translation, we can suppose that~$x_0=0$. With this, we have that
\[
\mbox{supp}(v_{\varepsilon, \gamma})
\subseteq B_{\varepsilon R}\subset B_R\subset \Omega.
\]
Therefore,
\begin{equation}\label{zse4rdfrtgyujhio1}
\begin{split}&
\|v_{\varepsilon, \gamma}\|_{L^{ p}(\Omega)}^p=\int_{B_{\varepsilon R}}|v_{\varepsilon, \gamma}(x)|^p\,dx=\epsilon^{\gamma p}\int_{B_{\varepsilon R}}
\left|u\left(\frac{x}{\varepsilon}\right)\right|^p\,dx  \\
&\qquad\qquad=\epsilon^{\gamma p+N}\int_{B_{R}}|u(y)|^p\,dx=\epsilon^{\gamma p+N}\|u\|_{L^p(\Omega)}^p.
\end{split}
\end{equation}
Moreover,
\begin{equation}\label{zse4rdfrtgyujhio2}
\begin{aligned}
[v_{\varepsilon, \gamma}]^p_{W^{s, p}(\Omega)}
&=\iint_{\Omega\times\Omega}\frac{|v_{\varepsilon, \gamma}(x)-v_{\varepsilon, \gamma}(y)|^p}{|x-y|^{N+sp}}\, dx \,dy \\
&=\iint_{B_{\varepsilon R}\times B_{\varepsilon R}}
\frac{|v_{\varepsilon, \gamma}(x)-v_{\varepsilon, \gamma}(y)|^p}{|x-y|^{N+sp}}\, dx \,dy 
+2\iint_{B_{\varepsilon R}\times(\Omega\setminus B_{\varepsilon R})}\frac{|v_{\varepsilon, \gamma}(x)|^p}{|x-y|^{N+sp}}\, dx \,dy  \\
&\ge\varepsilon^{\gamma p} \iint_{B_{\varepsilon R}\times B_{\varepsilon R}}
\frac{\left|u\left(\frac{x}\varepsilon\right) - u\left(\frac{y}\varepsilon\right)\right|^p}{|x-y|^{N+sp}}\, dx \,dy \\
&=
\varepsilon^{\gamma p + N - sp} [u]^p_{W^{s, p}(B_R)}.
\end{aligned}
\end{equation}
Also, by~\eqref{staruno}, used here with~$\beta:=-1$, 
we get
\begin{equation*}
[v_{\varepsilon, \gamma}]^p_{W^{s, p}(\Omega)}
\leq [v_{\varepsilon, \gamma}]^p_{W^{s, p}(\R^N)}
=\varepsilon^{\gamma p + N - sp} [u]^p_{W^{s, p}(\R^N)}.
\end{equation*}
Combining this with~\eqref{zse4rdfrtgyujhio2} we obtain~\eqref{agg00}.

Thus, \eqref{lemmino01Omega}
follows from~\eqref{agg00} and~\eqref{zse4rdfrtgyujhio1}.

Furthermore,
\begin{equation}\label{sup1}
\begin{split}
&\sup_{{x,y\in \Omega}\atop{x\ne y}}\frac{|v_{\varepsilon, \gamma}(x)-v_{\varepsilon, \gamma}(y)|}{|x-y|^s}
=\epsilon^\gamma
\sup_{{x,y\in \Omega}\atop{x\neq y}}\frac{\left|u\left(\frac{x}\varepsilon\right)-u\left(\frac{y}\varepsilon\right)\right|}{|x-y|^s}\ge\epsilon^\gamma
\sup_{{x,y\in B_{\varepsilon R}}\atop{x\neq y}}\frac{\left|u\left(\frac{x}\varepsilon\right)-u\left(\frac{y}\varepsilon\right)\right|}{|x-y|^s}\\
&\qquad\qquad= \epsilon^{\gamma-s}
\sup_{{x,y\in B_R}\atop{x\neq y}}\frac{\left|u(x)-u(y)\right|}{|x-y|^s}.
\end{split}
\end{equation}
Also, from~\eqref{startre} with~$\beta:=-1$, we get
\begin{equation*}
\sup_{{x,y\in \Omega}\atop{x\neq y}}\frac{|v_{\varepsilon, \gamma}(x)-v_{\varepsilon, \gamma}(y)|}{|x-y|^s}\le
\sup_{{x,y\in\R^N}\atop{x\neq y}}\frac{|v_{\varepsilon,\gamma}(x)-v_{\varepsilon,\gamma}(y)|}{|x-y|^s}
=\varepsilon^{\gamma-s}\sup_{\substack{x,y\in\R^N \\x\neq y}}\frac{|u(x)-u(y)|}{|x-y|^s}.
\end{equation*}
{F}rom this and~\eqref{sup1}, we obtain~\eqref{agg11}.

Also, \eqref{lemmino01bisOmega} follows from~\eqref{stardue}
and~\eqref{agg11}.
\end{proof}

\subsection{Towards the optimality statements}

With this preliminary work, we can now establish some optimality results.

\begin{lemma}\label{lemmino1}
Let~$s\in (0,1]$ and~$p\in[1,+\infty)$. Let~$\widetilde{s}$ and~$\widetilde{p}$ satisfy
\begin{equation}\label{parentesilemma1}
\begin{cases}
0\le \widetilde{s}\le 1,\\
1\le\widetilde{p}<p.
\end{cases}
\end{equation}

Then, the space~$W^{s,p}(\R^N)$ is not continuously embedded in $W^{\widetilde s,\widetilde p}(\R^N)$.
\end{lemma}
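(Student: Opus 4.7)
My plan is to exploit the failure of the Lebesgue-space inclusion $L^p(\R^N)\hookrightarrow L^{\widetilde p}(\R^N)$ when $\widetilde p<p$ on an unbounded domain, transferring it to the Sobolev level by means of the scaling formula in Lemma~\ref{lemmino0}. The whole argument rests on a one-parameter rescaling of a single bump function; no cross terms in the Gagliardo seminorm need to be analysed.

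First I would fix an arbitrary nonzero $u\in C_c^\infty(\R^N)$, so that $u$ lies in all the spaces at play, and define the dilation $v_\varepsilon(x):=u(\varepsilon x)$, for $\varepsilon\in(0,1)$. This corresponds to the choice $\gamma=0$, $\beta=1$ in Lemma~\ref{lemmino0}, which, via~\eqref{lemmino01}, yields the exact identities
\[
\|v_\varepsilon\|_{W^{s,p}(\R^N)}
= \varepsilon^{-N/p}\Bigl(\|u\|_{L^p(\R^N)}^p+\varepsilon^{sp}[u]_{W^{s,p}(\R^N)}^p\Bigr)^{1/p}
\]
and the analogous identity for $W^{\widetilde s,\widetilde p}(\R^N)$ (with $\widetilde s=0$ handled directly since $W^{0,\widetilde p}=L^{\widetilde p}$).

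Next I would upper-bound the larger-exponent norm and lower-bound the smaller-exponent one. Since $\varepsilon\in(0,1)$ and $sp>0$, we have $\varepsilon^{sp}\leq 1$, so
\[
\|v_\varepsilon\|_{W^{s,p}(\R^N)} \le \varepsilon^{-N/p}\,\|u\|_{W^{s,p}(\R^N)}.
\]
On the other hand, dropping the Gagliardo seminorm,
\[
\|v_\varepsilon\|_{W^{\widetilde s,\widetilde p}(\R^N)}
\ge \|v_\varepsilon\|_{L^{\widetilde p}(\R^N)}
= \varepsilon^{-N/\widetilde p}\,\|u\|_{L^{\widetilde p}(\R^N)}.
\]

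Finally I would take the ratio and let $\varepsilon\searrow 0$. Using the two bounds above,
\[
\frac{\|v_\varepsilon\|_{W^{\widetilde s,\widetilde p}(\R^N)}}{\|v_\varepsilon\|_{W^{s,p}(\R^N)}}
\ge \varepsilon^{\,N/p-N/\widetilde p}\,\frac{\|u\|_{L^{\widetilde p}(\R^N)}}{\|u\|_{W^{s,p}(\R^N)}},
\]
and since $\widetilde p<p$ forces $N/p-N/\widetilde p<0$, this ratio tends to $+\infty$ as $\varepsilon\searrow 0$. Hence no finite constant $C$ can satisfy $\|v\|_{W^{\widetilde s,\widetilde p}(\R^N)}\le C\|v\|_{W^{s,p}(\R^N)}$ for all $v\in W^{s,p}(\R^N)$, which is precisely the failure of the continuous embedding. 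The argument does not involve any genuine obstacle: the value of $\widetilde s\in[0,1]$ plays no role whatsoever, because the obstruction already lives at the purely Lebesgue level, and the Sobolev norms only enter through the trivial upper bound and the trivial lower bound by $\|\cdot\|_{L^{\widetilde p}}$.
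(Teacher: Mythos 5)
Your proof is correct and takes essentially the same route as the paper: both rest on the scaling identity of Lemma~\ref{lemmino0} for the dilated family $u(\varepsilon\,\cdot)$ and exploit the mismatch $N/p-N/\widetilde p<0$ to make the norm ratio blow up as $\varepsilon\searrow 0$. The only differences are cosmetic: you pick $\gamma=0$ and compare a ratio of norms, whereas the paper picks $\gamma=N/p$ (so that the $W^{s,p}$-norm of $v_\varepsilon$ stays bounded) and derives a contradiction directly from the assumed inequality; these are interchangeable normalizations of the same argument.
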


\begin{proof}
Suppose by contradiction that there exists~$C>0$ such that, for all~$u\in W^{s,p}(\R^N)$,
\begin{equation}\label{f43oytghvdsklhgerio5y4}
\|u\|_{W^{\widetilde s,\widetilde p}(\R^N)}\le C \|u\|_{W^{s,p}(\R^N)}.
\end{equation}

Let~$u\in W^{s,p}(\R^N)\setminus\{0\}$ and, for any~$\varepsilon\in(0,1)$, define the function
\[
v_\varepsilon(x):=\varepsilon^{\frac{N}{p}}u(\varepsilon x).
\]
By~\eqref{lemmino01} (used here with~$\gamma:=N/p$ and~$\beta:=1$), we get that
\[
\|v_\varepsilon\|_{W^{s,p}(\R^N)}=\left(\|u\|_{L^p(\R^N)}^p+\varepsilon^{sp}\,[u]_{W^{s,p}(\R^N)}^p \right)^\frac{1}{p}
\]
and
\[
\|v_\varepsilon\|_{W^{\widetilde s,\widetilde p}(\R^N)}=
\varepsilon^{-N\left(\frac{1}{\widetilde p}-\frac{1}{p}\right)}
\left(\|u\|_{L^{\widetilde p}(\R^N)}^{\widetilde p}+\varepsilon^{\widetilde s \widetilde p}\,[u]_{W^{\widetilde s,\widetilde p}(\R^N)}^{\widetilde p} \right)^\frac{1}{\widetilde p}.
\]

Thus, plugging~$v_\varepsilon$ into~\eqref{f43oytghvdsklhgerio5y4}, we obtain that
\[
\varepsilon^{-N\left(\frac{1}{\widetilde p}-\frac{1}{p}\right)}
\left(\|u\|_{L^{\widetilde p}(\R^N)}^{\widetilde p}+\varepsilon^{\widetilde s \widetilde p}\,[u]_{W^{\widetilde s,\widetilde p}(\R^N)}^{\widetilde p} \right)^\frac{1}{\widetilde p}\le C \left(\|u\|_{L^p(\R^N)}^p+\varepsilon^{sp}\,[u]_{W^{s,p}(\R^N)}^p \right)^\frac{1}{p}.
\]
Taking the limit as~$\varepsilon\searrow0$ and recalling that~$\widetilde p<p$, we obtain the desired contradiction.
\end{proof}

We now address separately the cases~$sp<N$, $sp=N$ and~$sp>N$.  When~$sp<N$ and~$\Omega$ is as in~\eqref{Omega}, we have the following:

\begin{lemma}\label{lemmino2}
Let~$s\in (0,1]$ and~$p\in[1,+\infty)$ be such that~$sp<N$. Let~$\Omega$ be as in~\eqref{Omega} and assume that~$\widetilde s$ and~$\widetilde p$ satisfy
\begin{equation}\label{parentesilemma2}
\mbox{either}\quad
\begin{cases}
s<\widetilde s \le 1,\\
p\le\widetilde p\le +\infty,
\end{cases}\quad\mbox{ or }\quad \begin{cases}
0\le\widetilde s \le s,\\
\frac{Np}{N-(s-\widetilde s)p}<\widetilde p\le +\infty.
\end{cases}
\end{equation}

Then, the space $W^{s,p}(\Omega)$ is not continuously embedded in $W^{\widetilde s,\widetilde p}(\Omega)$.
\end{lemma}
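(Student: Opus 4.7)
The plan is to argue by contradiction via a concentrating rescaling. Suppose there exists $C>0$ such that $\|u\|_{W^{\widetilde s,\widetilde p}(\Omega)}\le C\|u\|_{W^{s,p}(\Omega)}$ for every $u\in W^{s,p}(\Omega)$. Fix $x_0\in\Omega$ and $R>0$ with $B_R(x_0)\subset\Omega$, and pick a nontrivial $u\in C_c^\infty(B_R(x_0))$; such a $u$ lies in every Sobolev or H\"older space under consideration and has strictly positive seminorms. For $\varepsilon\in(0,1)$ I would introduce the rescaled family
\[
v_\varepsilon(x):=\varepsilon^{s-N/p}\,u\!\left(x_0+\frac{x}{\varepsilon}\right),
\]
which is supported in $B_{\varepsilon R}(x_0)\subset\Omega$. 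The prefactor exponent $\gamma:=s-N/p$ is the unique value making the homogeneous $(s,p)$-seminorm invariant under the dilation.

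Applying Lemma~\ref{lemmino0Omega} (respectively Lemma~\ref{lemmino0} with $\beta=-1$ when $\Omega=\R^N$) with this choice of $\gamma$, the upper bound in~\eqref{lemmino01Omega} gives
\[
\|v_\varepsilon\|_{W^{s,p}(\Omega)}\le\bigl(\varepsilon^{sp}\|u\|_{L^p(\Omega)}^p+[u]_{W^{s,p}(\R^N)}^p\bigr)^{1/p},
\]
which stays bounded as $\varepsilon\searrow 0$. Conversely, the lower bounds in~\eqref{agg00}, \eqref{agg11}, and~\eqref{lemmino01bisOmega}, together with the identity $\|v_\varepsilon\|_{L^\infty(\Omega)}=\varepsilon^{s-N/p}\|u\|_{L^\infty(\Omega)}$ in the case $\widetilde s=0$, $\widetilde p=+\infty$, yield, with the convention $1/\widetilde p=0$ when $\widetilde p=+\infty$, the estimate
\[
\|v_\varepsilon\|_{W^{\widetilde s,\widetilde p}(\Omega)}\ge c\,\varepsilon^{\alpha},\qquad \alpha:=(s-\widetilde s)-N\!\left(\tfrac{1}{p}-\tfrac{1}{\widetilde p}\right),
\]
for some $c>0$ depending only on $u$, $N$, $\widetilde s$, $\widetilde p$.

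It then suffices to verify that $\alpha<0$, since then letting $\varepsilon\searrow 0$ forces $\|v_\varepsilon\|_{W^{\widetilde s,\widetilde p}(\Omega)}\to+\infty$ while $\|v_\varepsilon\|_{W^{s,p}(\Omega)}$ stays bounded, contradicting the assumed embedding. In the first subcase of~\eqref{parentesilemma2} one has $s-\widetilde s<0$ and $1/p-1/\widetilde p\ge 0$, so $\alpha<0$ at once. In the second subcase, multiplying the strict inequality $\widetilde p>Np/(N-(s-\widetilde s)p)$ through by the positive quantity $N-(s-\widetilde s)p$ (positive because $(s-\widetilde s)p\le sp<N$) and then by $1/(p\widetilde p)$ gives precisely $N(1/p-1/\widetilde p)>s-\widetilde s$, i.e.\ $\alpha<0$. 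The main step requiring care is this algebraic equivalence between the critical-exponent condition and the sign of the scaling exponent; once that is in place, everything else is routine bookkeeping using the estimates already established in Lemmas~\ref{lemmino0} and~\ref{lemmino0Omega}.
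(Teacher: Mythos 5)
Your proposal is correct and follows essentially the same path as the paper: the same concentrating rescaling $v_\varepsilon(x)=\varepsilon^{s-N/p}u(x_0+x/\varepsilon)$ (i.e.\ $\gamma=-(N-sp)/p$, $\beta=-1$), the same invocation of Lemmas~\ref{lemmino0} and~\ref{lemmino0Omega}, and the same observation that the scaling exponent $-N/p+N/\widetilde p+s-\widetilde s$ is negative under~\eqref{parentesilemma2}, which forces a blow-up as $\varepsilon\searrow0$. The only cosmetic difference is that you package the exponent into a single $\alpha$ and treat the four subcases ($\widetilde p$ finite or infinite, $\widetilde s$ zero or positive) uniformly, whereas the paper handles $\Omega=\R^N$ with exact equalities and bounded $\Omega$ with one-sided bounds, and separates the $\widetilde p=+\infty$ case explicitly.
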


\begin{proof}
We first deal with the case~$\Omega=\R^N$.
Also, we first consider the case~$\widetilde p = +\infty$.
Suppose by contradiction that there exists~$C>0$ such that, for all~$u\in W^{s,p}(\R^N)$,
\begin{equation}\label{f43oytghvdsklhgerio5y42sdfgh}
\|u\|_{C^{0,\widetilde{s}}(\R^N)}\le C \|u\|_{W^{s,p}(\R^N)}.
\end{equation}
Let~$u\in W^{s,p}(\R^N)\setminus\{0\}$ and, 
for any~$\varepsilon\in(0,1)$, define the function
\begin{equation}\label{zxdfghjk-}
v_\varepsilon(x):=\varepsilon^{-\frac{N-sp}{p}}u\left(\frac{x}{\varepsilon}\right).
\end{equation}
Then, by~\eqref{lemmino01} and~\eqref{lemmino01bis} (used
here with~$\gamma:=-(N-sp)/p$ and~$\beta:=-1$), 
we have that
\begin{equation}\label{xsdrtgvbnhjiol}
\|v_\varepsilon\|_{W^{s,p}(\R^N)}=\left(\varepsilon^{sp}\,\|u\|_{L^p(\R^N)}^p+ [u]_{W^{s,p}(\R^N)}^p \right)^\frac{1}{p}
\end{equation}
and
\[
\|v_{\varepsilon}\|_{C^{0, \widetilde s}(\R^N)}=
\varepsilon^{-\frac{N-sp}p} \|u\|_{L^\infty(\R^N)}+
\varepsilon^{-\frac{N-sp}p-\widetilde s}\sup_{\substack{x,y\in \R^N \\x\neq y}}\frac{|u(x)-u(y)|}{|x-y|^{\widetilde s}}.
\]
As a result, plugging~$v_{\varepsilon}$ into~\eqref{f43oytghvdsklhgerio5y42sdfgh},
we obtain that
\[
\varepsilon^{-\frac{N-sp}p}\left(\|u\|_{L^\infty(\R^N)}+
\varepsilon^{-\widetilde s}\sup_{\substack{x,y\in \R^N \\x\neq y}}\frac{|u(x)-u(y)|}{|x-y|^{\widetilde s}} \right)\leq C\left(\varepsilon^{sp}\,\|u\|_{L^p(\R^N)}^p+ [u]_{W^{s,p}(\R^N)}^p \right)^\frac{1}{p}.
\]
Thus, taking the limit as~$\varepsilon\searrow0$ we get a contradiction.

Now, we assume~$\widetilde p \ne +\infty$.
Suppose by contradiction that there exists~$C>0$ such that, for all~$u\in W^{s,p}(\R^N)$,
\begin{equation}\label{f43oytghvdsklhgerio5y42}
\|u\|_{W^{\widetilde s,\widetilde p}(\R^N)}\le C \|u\|_{W^{s,p}(\R^N)}.
\end{equation}

Let~$u\in W^{s,p}(\R^N)\setminus\{0\}$ and, 
for any~$\varepsilon\in(0,1)$, define the function~$v_\varepsilon$ as in~\eqref{zxdfghjk-}.
By~\eqref{lemmino01} (used here~$\gamma :=-(N-sp)/p$ and~$\beta:=-1$), we infer that
\[
\|v_\varepsilon\|_{W^{\widetilde s,\widetilde p}(\R^N)}=\varepsilon^{-\frac{N}{p} +\frac{N}{\widetilde p} + s-\widetilde s}
\left(\varepsilon^{\widetilde s \widetilde p}\,\|u\|_{L^{\widetilde p}(\R^N)}^{\widetilde p}+ [u]_{W^{\widetilde s,\widetilde p}(\R^N)}^{\widetilde p} \right)^\frac{1}{\widetilde p}.
\]
Thus, plugging~$v_\varepsilon$ into~\eqref{f43oytghvdsklhgerio5y42}
and recalling also~\eqref{xsdrtgvbnhjiol},
\[
\varepsilon^{-\frac{N}{p} +\frac{N}{\widetilde p} + s-\widetilde s}\left(\varepsilon^{\widetilde s \widetilde p}\,\|u\|_{L^{\widetilde p}(\R^N)}^{\widetilde p}+ [u]_{W^{\widetilde s,\widetilde p}(\R^N)}^{\widetilde p} \right)^\frac{1}{\widetilde p}\le C\left(\varepsilon^{sp}\,\|u\|_{L^p(\R^N)}^p+[u]_{W^{s,p}(\R^N)}^p \right)^\frac{1}{p}.
\]
We notice that, in both the cases in~\eqref{parentesilemma2},
\begin{equation}\label{esponentenegativo}
-\frac{N}{p} +\frac{N}{\widetilde p} + s-\widetilde s<0.
\end{equation}
Hence, taking the limit as~$\varepsilon\searrow0$, we obtain the desired contradiction.

Let now~$\Omega\subset\R^N$ be an open bounded domain with Lipschitz boundary.
Up to a translation, we can assume that~$0\in \Omega$. Let~$R>0$ be such that~$B_R\subset \Omega$. We also take~$u:\R^N\to \R$ measurable such that~$u=0$ in~$\R^N\setminus B_R$, $u$ does not vanish identically and~$[u]_{W^{s, p}(\R^N)}<+\infty$.
For any~$\varepsilon\in (0,1)$, we define
\[
v_{\varepsilon}(x):= \varepsilon^{-(N-sp)/p} u\left(\frac{x}{\varepsilon}\right).
\]
Suppose by contradiction that there exists~$C>0$ such that, for all~$u\in W^{s,p}(\Omega)$, 
\[
\|u\|_{W^{\widetilde s,\widetilde p}(\Omega)}\le C \|u\|_{W^{s,p}(\Omega)}.
\]

We exploit Lemma~\ref{lemmino0Omega} with~$\gamma:= -(N-sp)/p$. More precisely, if~$\widetilde p\ne +\infty$, we use~\eqref{lemmino01Omega} to find that
\[
\|v_\varepsilon\|_{W^{\widetilde s,\widetilde p}(\Omega)}\ge \varepsilon^{-\frac{N}{p} +\frac{N}{\widetilde p} + s-\widetilde s}\left(\varepsilon^{\widetilde s \widetilde p}\,\|u\|_{L^{\widetilde p}(\Omega)}^{\widetilde p}+ [u]_{W^{\widetilde s,\widetilde p}(B_R)}^{\widetilde p} \right)^\frac{1}{\widetilde p}
\]
and
\[
\|v_\varepsilon\|_{W^{\widetilde s,\widetilde p}(\Omega)}\le C \|v_\varepsilon\|_{W^{s,p}(\Omega)}\le C\left(\varepsilon^{sp}\,\|u\|_{L^p(\Omega)}^p+[u]_{W^{s,p}(\R^N)}^p \right)^\frac{1}{p}.
\]
The last two displays entail that
\[
\varepsilon^{-\frac{N}{p} +\frac{N}{\widetilde p} + s-\widetilde s}\left(\varepsilon^{\widetilde s \widetilde p}\,\|u\|_{L^{\widetilde p}(\Omega)}^{\widetilde p}+ [u]_{W^{\widetilde s,\widetilde p}(B_R)}^{\widetilde p} \right)^\frac{1}{\widetilde p}\le C\left(\varepsilon^{sp}\,\|u\|_{L^p(\Omega)}^p+[u]_{W^{s,p}(\R^N)}^p \right)^\frac{1}{p}.
\]
Recalling~\eqref{esponentenegativo} and taking the limit as~$\varepsilon\searrow 0$, we get the desired contradiction.

If instead~$\widetilde p = +\infty$, we exploit~\eqref{lemmino01Omega} and~\eqref{lemmino01bisOmega} and see that
\[
\begin{split}
&\varepsilon^{-\frac{N-sp}{p}} \|u\|_{L^\infty(\Omega)}+\varepsilon^{-\frac{N-sp}{p}-\widetilde s}\sup_{\substack{x,y\in B_R \\x\ne y}}\frac{|u(x)-u(y)|}{|x-y|^{\widetilde s}}\le \|v_\varepsilon\|_{C^{0, \widetilde s}(\Omega)}\\
&\qquad\quad\le C \|v_\varepsilon\|_{W^{s,p}(\Omega)}\le C\left(\varepsilon^{sp}\,\|u\|_{L^p(\Omega)}^p+[u]_{W^{s,p}(\R^N)}^p \right)^\frac{1}{p}.
\end{split}
\]
By taking the limit as~$\varepsilon\searrow 0$, we get the desired contradiction also in the case~$\widetilde p =+\infty$.
\end{proof}

We now consider the case~$sp=N$. In this case, the following result holds true:

\begin{lemma}\label{lemmino3}
Let~$s\in (0,1]$ and~$p\in[1,+\infty)$ be such that~$sp=N$. Let~$\Omega$ be as in~\eqref{Omega}. 

If~$N\geq 2$, assume that~$\widetilde s$ and~$\widetilde p$ satisfy\footnote{With a slight abuse of notation, when~$\widetilde s=0$ in
the second possibility in~\eqref{parentesilemma3}, we allow~$\widetilde p=+\infty$.}
\begin{equation}\label{parentesilemma3}
\mbox{either}\quad
\begin{cases}
s<\widetilde s \le 1,\\
p\le\widetilde p\le +\infty,
\end{cases}\quad\mbox{ or }\quad \begin{cases}
0\le\widetilde s \le s,\\
\dfrac{N}{\widetilde s}<\widetilde p\le +\infty.
\end{cases}
\end{equation}

If~$N=1$, assume that~$\widetilde s$ and~$\widetilde p$ satisfy
\begin{equation}\label{parentesilemma3BIS}
\mbox{either}\quad
\begin{cases}
s<\widetilde s \le 1,\\
p\le\widetilde p\le +\infty,
\end{cases}\quad\mbox{ or }\quad \begin{cases}
0< \widetilde s \le s,\\
\dfrac{N}{\widetilde s}<\widetilde p\le +\infty.
\end{cases}
\end{equation}

Then, the space $W^{s,p}(\Omega)$ is not continuously embedded in $W^{\widetilde s,\widetilde p}(\Omega)$.
\end{lemma}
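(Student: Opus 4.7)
The plan is to argue by contradiction, following the template of Lemma~\ref{lemmino2} and applying Lemmas~\ref{lemmino0} and~\ref{lemmino0Omega} with~$\gamma=0$ and~$\beta=-1$; since~$sp=N$, this is the natural scale-invariant choice, and the test family is~$v_\varepsilon(x):=u(x/\varepsilon)$. I fix~$u\in C_c^\infty(B_R)\setminus\{0\}$, where, when~$\Omega$ is bounded, a harmless translation ensures $B_R\subset\Omega$. Assuming for contradiction the existence of $C>0$ with $\|\cdot\|_{W^{\widetilde s,\widetilde p}(\Omega)}\le C\|\cdot\|_{W^{s,p}(\Omega)}$, I split according to the value of~$\widetilde p$.

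When $\widetilde p<+\infty$, the identities~\eqref{lemmino01} and~\eqref{lemmino01Omega} yield that $\|v_\varepsilon\|_{W^{s,p}}$ stays bounded as $\varepsilon\searrow 0$, while
\[
\|v_\varepsilon\|_{W^{\widetilde s,\widetilde p}}\;\ge\; \varepsilon^{N/\widetilde p-\widetilde s}\bigl(\varepsilon^{\widetilde s\widetilde p}\|u\|_{L^{\widetilde p}}^{\widetilde p}+[u]_{W^{\widetilde s,\widetilde p}(B_R)}^{\widetilde p}\bigr)^{1/\widetilde p}.
\]
A direct check shows that both alternatives in~\eqref{parentesilemma3} and~\eqref{parentesilemma3BIS} force $\widetilde s\widetilde p>N=sp$, hence $N/\widetilde p-\widetilde s<0$; choosing $u$ with $[u]_{W^{\widetilde s,\widetilde p}(B_R)}>0$ and letting $\varepsilon\searrow 0$ then contradicts the supposed bound. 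When $\widetilde p=+\infty$ and $\widetilde s>0$, the Hölder parts of~\eqref{lemmino01bis} and~\eqref{lemmino01bisOmega} give $\|v_\varepsilon\|_{C^{0,\widetilde s}}\ge \varepsilon^{-\widetilde s}[u]_{C^{0,\widetilde s}(B_R)}$, which blows up as $\varepsilon\searrow 0$ while $\|v_\varepsilon\|_{W^{s,p}}$ remains bounded; this disposes of the remaining subcases with $\widetilde p=+\infty$ and $\widetilde s>0$.

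The main obstacle is the residual case $\widetilde s=0$, $\widetilde p=+\infty$, which is allowed only for $N\ge 2$; this explains the asymmetry with~\eqref{parentesilemma3BIS}, since for $N=1$ one in fact has the embedding $W^{1,1}(\R)\hookrightarrow L^\infty(\R)$ recorded in Theorem~\ref{teorema1sp=N}. Here the scaling $v_\varepsilon$ is neutral, because $\|v_\varepsilon\|_{L^\infty}=\|u\|_{L^\infty}$ and $\|v_\varepsilon\|_{W^{s,p}}$ stays bounded, so the contradiction cannot come from rescaling but must be produced by an explicit unbounded element of $W^{s,p}$. My proposal is to take $u(x):=\eta(x)\log\log(A/|x|)$, with a smooth cutoff $\eta$ supported in a small ball around the origin and $A$ sufficiently large, and to verify by a direct Gagliardo estimate in polar coordinates, exploiting $N\ge 2$ to tame the double integral, that $u\in W^{s,p}(\Omega)$ whenever $sp=N$. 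Since $u$ is manifestly unbounded, this rules out the embedding into $L^\infty$; the same function, supported in a fixed bounded region, also serves when $\Omega=\R^N$. Executing this logarithmic computation is the technical heart of the proof; all other cases reduce to bookkeeping of the scaling exponents already worked out in Lemmas~\ref{lemmino0} and~\ref{lemmino0Omega}.
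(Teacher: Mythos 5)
Your argument matches the paper's proof in structure for all cases except the one you correctly identify as residual: the scaling argument with $v_\varepsilon(x)=u(x/\varepsilon)$ is precisely what the paper does (Lemmas~\ref{lemmino0} and~\ref{lemmino0Omega} with $\gamma=0$, $\beta=-1$), your exponent bookkeeping is right (both alternatives force $\widetilde s\,\widetilde p>N=sp$, hence $N/\widetilde p-\widetilde s<0$, while for $\widetilde p=+\infty$, $\widetilde s>0$ the H\"older seminorm scales like $\varepsilon^{-\widetilde s}$), and your observation about why $N=1$, $\widetilde s=0$, $\widetilde p=+\infty$ must be excluded is exactly the remark the authors make right after the statement.

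The divergence is in the case $\widetilde s=0$, $\widetilde p=+\infty$, $N\ge 2$. Here the paper does not construct anything: it simply cites~\cite[formulas~(1.9)--(1.10) in Theorem~B]{MR3990737} (with a pointer to Appendix~A of~\cite{MR4053239} for explicit examples) to conclude that $W^{s,p}(\Omega)$, $sp=N$, $p\in(1,+\infty)$, does not embed into $L^\infty(\Omega)$. You instead propose the explicit double-logarithm $u(x)=\eta(x)\log\log(A/|x|)$, which is indeed the classical counterexample and, if the membership $u\in W^{s,N/s}$ is verified, gives a self-contained argument that the paper's citation avoids. This is a defensible design choice, and the construction is correct (for $s=1$ it is textbook, and for $s\in(0,1)$ a dyadic-annulus estimate of the Gagliardo seminorm closes using $sp=N$ and $p>1$).

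The gap is that you declare the Gagliardo estimate ``the technical heart of the proof'' and then do not carry it out. For fractional $s$ this is genuinely nontrivial: one must handle separately the near-diagonal contribution (where $|x-y|\ll|x|$, using $|\nabla u|\sim 1/(|x|\log(A/|x|))$ together with $p(1-s)>0$), the comparable-scale contribution on dyadic annuli (giving a $\sum_k k^{-p}$ which converges only because $p>1$, i.e., only because $N\ge 2$), and the far-off-diagonal contribution; none of these is routine, and the convergence hinges precisely on $p=N/s>1$. Since this is exactly the delicate point the paper deliberately outsources to Brezis--Mironescu, leaving it as a promissory note leaves your proof incomplete at the single step where it departs from the paper. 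Either carry out the estimate in full, or replace the construction by the citation the paper uses (noting, as the paper does, that $sp=N$ with $N\ge2$ forces $p>1$, so the cited result applies).
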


We point out that the only difference between the cases~$N\ge2$
in~\eqref{parentesilemma3} and~$N=1$ in~\eqref{parentesilemma3BIS}
is that in the latter we do not allow~$\widetilde s=0$.
The reason for this is that when~$N=1$ and~$\widetilde s=0$,
from~\eqref{parentesilemma3BIS} we would have that~$\widetilde p=+\infty$, namely we look at the embedding of~$W^{s,p}(\Omega)$
into~$L^\infty(\Omega)$ when~$sp=1$.
In this case, when~$p\in(1,+\infty)$ we know that this embedding is false by~\cite[formulas~(1.9)--(1.10) in
Theorem~B]{MR3990737}, while when~$p=1$ 
there is a continuous embedding of~$W^{1,1}(\Omega)$ into~$L^\infty(\Omega)$, in light of~\cite[formula~(1.6) in
Theorem~B]{MR3990737}.

\begin{proof}[Proof of Lemma~\ref{lemmino3}]
We first consider the case~$\widetilde p = +\infty$.
We recall that the choice of~$\widetilde s=0$
can occur only in~\eqref{parentesilemma3} and leads to $W^{\widetilde s, \widetilde p}(\Omega)=W^{0, \infty}(\Omega)=L^\infty(\Omega)$.
In this case, we have that
if~$s\in (0,1]$ and~$p\in(1,+\infty)$ with~$sp=N$, then
\begin{equation*}
\mbox{if $N\ge 2$, then $W^{s, p}(\Omega)$ is not continuously embedded in $L^\infty(\Omega)$},
\end{equation*}
as established in~\cite[formulas~(1.9)--(1.10) in
Theorem~B]{MR3990737} (see also Appendix~A
of~\cite{MR4053239} for additional explicit examples).

Hence, from now on, we suppose that~$\widetilde s>0$.

We first address the case~$\Omega=\R^N$.
If~$\widetilde p =+\infty$, we suppose by contradiction that there exists~$C>0$ such that, for all~$u\in W^{s,p}(\R^N)$,
\begin{equation}\label{f43oytghvdsklhgerio5y43ihujok}
\|u\|_{C^{0, \widetilde s}(\R^N)}\le C \|u\|_{W^{s,p}(\R^N)}.
\end{equation}
For any~$\varepsilon\in (0, 1)$ we define the function
\begin{equation}\label{opjihbjkhv}
v_\varepsilon(x):=u\left(\frac{x}{\varepsilon}\right).
\end{equation}
By~\eqref{lemmino01} and~\eqref{lemmino01bis} (used here with~$\gamma:=0$ and~$\beta:=-1$), we have that
\begin{equation}\label{yguhijnokm}
\|v_\varepsilon\|_{W^{s,p}(\R^N)}=\left(\varepsilon^{sp}\,\|u\|_{L^p(\R^N)}^p+ [u]_{W^{s,p}(\R^N)}^p \right)^\frac{1}{p}
\end{equation}
and
\[
\|v_{\varepsilon, \gamma}\|_{C^{0, \widetilde s}(\R^N)}= \|u\|_{L^\infty(\R^N)}+
\varepsilon^{-\widetilde s}\sup_{\substack{x,y\in \R^N \\x\neq y}}\frac{|u(x)-u(y)|}{|x-y|^{\widetilde s}}.
\]
Thus, plugging~$v_\varepsilon$ into~\eqref{f43oytghvdsklhgerio5y43ihujok}, we get
\[
\|u\|_{L^\infty(\R^N)}+\varepsilon^{-\widetilde s}\sup_{\substack{x,y\in \R^N \\x\neq y}}\frac{|u(x)-u(y)|}{|x-y|^{\widetilde s}} \le C\left(\varepsilon^{sp}\,\|u\|_{L^p(\R^N)}^p+ [u]_{W^{s,p}(\R^N)}^p \right)^\frac{1}{p}.
\]
Since~$\widetilde s>0$, by taking the limit as~$\varepsilon\searrow0$, we get a contradiction with~\eqref{f43oytghvdsklhgerio5y43ihujok}.

We now assume that~$\widetilde p \ne +\infty$.
Suppose by contradiction that there exists~$C>0$ such that, for all~$u\in W^{s,p}(\R^N)$,
\begin{equation}\label{f43oytghvdsklhgerio5y43}
\|u\|_{W^{\widetilde s,\widetilde p}(\R^N)}\le C \|u\|_{W^{s,p}(\R^N)}.
\end{equation}

For any~$\varepsilon\in (0, 1)$ we consider the function~$v_\varepsilon$ as in~\eqref{opjihbjkhv}.
By~\eqref{lemmino01} (used here with~$\gamma:=0$ and~$\beta:=-1$),
we infer that~\eqref{yguhijnokm} holds and 
\[
\|v_\varepsilon\|_{W^{\widetilde s,\widetilde p}(\R^N)}=\varepsilon^{-\left(\widetilde s-\frac{N}{\widetilde p}\right)}\left(\varepsilon^{\widetilde s \widetilde p}\|u\|_{L^{\widetilde p}(\R^N)}^{\widetilde p}+ [u]_{W^{\widetilde s,\widetilde p}(\R^N)}^{\widetilde p} \right)^\frac{1}{\widetilde p}.
\]
Thus, plugging~$v_\varepsilon$ into~\eqref{f43oytghvdsklhgerio5y43}, we get
\begin{equation}\label{6t574r832gvhfjcdxbvmfncdploikujyhythn:fbrek}
\varepsilon^{-\left(\widetilde s-\frac{N}{\widetilde p}\right)}
\left(\varepsilon^{\widetilde s \widetilde p}\,\|u\|_{L^{\widetilde p}(\R^N)}^{\widetilde p}+ [u]_{W^{\widetilde s,\widetilde p}(\R^N)}^{\widetilde p} \right)^\frac{1}{\widetilde p}\le C
\left(\varepsilon^{sp}\,\|u\|_{L^p(\R^N)}^p+[u]_{W^{s,p}(\R^N)}^p \right)^\frac{1}{p}.
\end{equation}
Now, in both the cases in~\eqref{parentesilemma3} when~$N\ge2$ and in~\eqref{parentesilemma3BIS} when~$N=1$,
we have that
\begin{equation}\label{esponentepositivo}
\widetilde s-\frac{N}{\widetilde p}>0.
\end{equation}
Hence, taking the limit as~$\varepsilon\searrow0$
in~\eqref{6t574r832gvhfjcdxbvmfncdploikujyhythn:fbrek}, we obtain the desired contradiction.

Let now~$\Omega\subset\R^N$ be an open and bounded domain with Lipschitz boundary.
Up to a translation, we can assume that~$0\in \Omega$.
Let~$R>0$ be such that~$B_R\subset \Omega$ and take~$u:\R^N\to \R$ measurable such that~$u=0$ in~$\R^N\setminus B_R$, $u$ does not vanish identically and~$[u]_{W^{s, p}(\R^N)}<+\infty$.
For any~$\varepsilon\in (0,1)$, we define
\[
v_{\varepsilon}(x):= u\left(\frac{x}{\varepsilon}\right).
\]
Let~$\widetilde s >0$ and suppose by contradiction that there exists~$C>0$ such that, for all~$u\in W^{s,p}(\Omega)$, 
\[
\|u\|_{W^{\widetilde s,\widetilde p}(\Omega)}\le C \|u\|_{W^{s,p}(\Omega)}.
\]
We exploit Lemma~\ref{lemmino0Omega} with~$\gamma:= 0$.
In this way, if~$\widetilde p\ne +\infty$, using~\eqref{lemmino01Omega}
we have that
\[
\|v_\varepsilon\|_{W^{\widetilde s,\widetilde p}(\Omega)}\ge \varepsilon^{-\left(\widetilde s-\frac{N}{\widetilde p}\right)}
\left(\varepsilon^{\widetilde s \widetilde p}\,\|u\|_{L^{\widetilde p}(\Omega)}^{\widetilde p}+ [u]_{W^{\widetilde s,\widetilde p}(B_R)}^{\widetilde p} \right)^\frac{1}{\widetilde p}
\]
and
\[
\|v_\varepsilon\|_{W^{\widetilde s,\widetilde p}(\Omega)}\le C \|v_\varepsilon\|_{W^{s,p}(\Omega)}\le C\left(\varepsilon^{sp}\,\|u\|_{L^p(\Omega)}^p+[u]_{W^{s,p}(\R^N)}^p \right)^\frac{1}{p}.
\]
The last two displays show that
\[
\varepsilon^{-\left(\widetilde s-\frac{N}{\widetilde p}\right)}
\left(\varepsilon^{\widetilde s \widetilde p}\,\|u\|_{L^{\widetilde p}(\Omega)}^{\widetilde p}+ [u]_{W^{\widetilde s,\widetilde p}(B_R)}^{\widetilde p} \right)^\frac{1}{\widetilde p}\le C\left(\varepsilon^{sp}\,\|u\|_{L^p(\Omega)}^p+[u]_{W^{s,p}(\R^N)}^p \right)^\frac{1}{p}.
\]
Recalling~\eqref{esponentepositivo} and taking the limit as~$\varepsilon\searrow 0$, we get the desired contradiction.

If instead~$\widetilde p = +\infty$,
we use~\eqref{lemmino01bisOmega} and we find that
\[
\begin{split}
&\|u\|_{L^\infty(\Omega)}+\varepsilon^{-\widetilde s}\sup_{\substack{x,y\in B_R \\x\ne y}}\frac{|u(x)-u(y)|}{|x-y|^{\widetilde s}}\le \|v_\varepsilon\|_{C^{0, \widetilde s}(\Omega)}\\
&\qquad\le C \|v_\varepsilon\|_{W^{s,p}(\Omega)}\le C\left(\varepsilon^{sp}\,\|u\|_{L^p(\Omega)}^p+[u]_{W^{s,p}(\R^N)}^p \right)^\frac{1}{p}.
\end{split}
\]
Taking the limit as~$\varepsilon\searrow 0$, we get the desired contradiction.
\end{proof}

For~$sp>N$, we provide the following:

\begin{lemma}\label{lemmino4}
Let~$s\in (0,1]$ and~$p\in[1,+\infty]$ be such that~$sp>N$. Let~$\Omega$ be as in~\eqref{Omega} and assume that~$\widetilde s$ and~$\widetilde p$ satisfy
\begin{equation}\label{parentesilemma4}
\mbox{either}\quad
\begin{cases}
s<\widetilde s \le 1,\\
p\le\widetilde p\le +\infty,
\end{cases}\quad\mbox{ or }\quad \begin{cases}
\dfrac{sp-N}{p}<\widetilde s \le s,\\
\dfrac{Np}{N-(s-\widetilde s)p}<\widetilde p\le +\infty.
\end{cases}
\end{equation}

Then, the space $W^{s,p}(\Omega)$ is not continuously embedded in $W^{\widetilde s,\widetilde p}(\Omega)$.
\end{lemma}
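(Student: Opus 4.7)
The plan is to mirror the scaling arguments of Lemmas~\ref{lemmino2} and~\ref{lemmino3}, using the single unified choice of test functions
\[
v_\varepsilon(x):=\varepsilon^{(sp-N)/p}\,u\!\left(\frac{x}\varepsilon\right),\qquad \varepsilon\in(0,1),
\]
for a fixed $u\in W^{s,p}(\R^N)\setminus\{0\}$ (in the bounded case, supported in some ball $B_R\subset\Omega$, after translating so that $0\in\Omega$). This corresponds to applying Lemma~\ref{lemmino0} with $\gamma:=(sp-N)/p$ and $\beta:=-1$; when $p=+\infty$ one reads $(sp-N)/p$ as $s$ and uses~\eqref{lemmino01bis} in place of~\eqref{lemmino01}.

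Plugging these choices into~\eqref{lemmino01} makes the prefactor in $\|v_\varepsilon\|_{W^{s,p}(\R^N)}$ equal to $\varepsilon^0=1$, so
\[
\|v_\varepsilon\|_{W^{s,p}(\R^N)}=\bigl(\varepsilon^{sp}\|u\|_{L^p(\R^N)}^p+[u]_{W^{s,p}(\R^N)}^p\bigr)^{1/p}
\]
stays uniformly bounded as $\varepsilon\searrow 0$. The same lemma produces, for $\widetilde p<+\infty$, the prefactor $\varepsilon^{-N/p+N/\widetilde p+s-\widetilde s}$ in front of $\|v_\varepsilon\|_{W^{\widetilde s,\widetilde p}(\R^N)}$, and, for $\widetilde p=+\infty$, the prefactor $\varepsilon^{(sp-N)/p-\widetilde s}$ in front of the H\"older seminorm term. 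The core of the argument is then to verify that this leading exponent is strictly negative under~\eqref{parentesilemma4}: in case~1, the inequality $s<\widetilde s$ gives $s-\widetilde s<0$, while $\widetilde p\ge p$ gives $N/\widetilde p-N/p\le 0$, so the sum is strictly negative; in case~2, the condition $\widetilde s>(sp-N)/p$ ensures $N-(s-\widetilde s)p>0$, and the assumption $\widetilde p>\tfrac{Np}{N-(s-\widetilde s)p}$ rearranges exactly into $-N/p+N/\widetilde p+s-\widetilde s<0$ (respectively $(sp-N)/p-\widetilde s<0$ when $\widetilde p=+\infty$). Hence $\|v_\varepsilon\|_{W^{\widetilde s,\widetilde p}(\R^N)}\to+\infty$ as $\varepsilon\searrow 0$, contradicting any hypothetical bound of the form $\|v_\varepsilon\|_{W^{\widetilde s,\widetilde p}(\R^N)}\le C\|v_\varepsilon\|_{W^{s,p}(\R^N)}$.

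For the bounded-domain case, the argument goes through verbatim using Lemma~\ref{lemmino0Omega} in place of Lemma~\ref{lemmino0}: the lower bounds in~\eqref{lemmino01Omega} and~\eqref{lemmino01bisOmega} on $\|v_\varepsilon\|_{W^{\widetilde s,\widetilde p}(\Omega)}$, together with the matching upper bounds on $\|v_\varepsilon\|_{W^{s,p}(\Omega)}$ (for $u$ supported in $B_R$), give the same decisive power of $\varepsilon$ and the same contradiction in the limit $\varepsilon\searrow 0$. The corner case $p=+\infty$, in which~\eqref{parentesilemma4} forces $\widetilde p=+\infty$ and $s<\widetilde s$, is handled by the H\"older variant of the same scaling (with $\gamma=s$, $\beta=-1$, via~\eqref{lemmino01bis} or~\eqref{lemmino01bisOmega}) and reduces to the classical non-embedding $C^{0,s}\not\hookrightarrow C^{0,\widetilde s}$ for $\widetilde s>s$.

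The main point to watch is a purely bookkeeping one: confirming strict negativity of the leading exponent in every sub-case of~\eqref{parentesilemma4}, noting in particular that $(sp-N)/p<s$ (so that case~2 is nonempty) and that the inequality imposed on $\widetilde p$ in case~2 is precisely equivalent to the inequality we need on the exponent. No genuinely new difficulty arises compared with Lemmas~\ref{lemmino2} and~\ref{lemmino3}; the unifying identity $\gamma=(sp-N)/p=-(N-sp)/p$ interchanges the roles of ``positive'' and ``negative'' leading exponent across the three regimes $sp<N$, $sp=N$, $sp>N$.
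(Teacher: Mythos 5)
Your proposal is correct and coincides essentially with the paper's own proof: the paper also takes $v_\varepsilon(x)=\varepsilon^{s-N/p}\,u(x/\varepsilon)$ (i.e.\ $\gamma=(sp-N)/p$, $\beta=-1$), applies Lemma~\ref{lemmino0} (resp.\ Lemma~\ref{lemmino0Omega} for bounded $\Omega$), obtains the same prefactor $\varepsilon^{-N/p+N/\widetilde p+s-\widetilde s}$ (resp.\ $\varepsilon^{s-\widetilde s-N/p}$ when $\widetilde p=+\infty$), verifies its strict negativity exactly as you do, and concludes by letting $\varepsilon\searrow 0$. The only small difference is that you explicitly flag the degenerate sub-case $p=+\infty$ (where \eqref{parentesilemma4} forces $\widetilde p=+\infty$ and $\widetilde s>s$), which the paper's written argument leaves implicit since it invokes \eqref{lemmino01} for the $W^{s,p}$ side; your remark correctly handles that corner via \eqref{lemmino01bis}.
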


\begin{proof}
We first address the case~$\Omega=\R^N$.
If~$\widetilde p \ne +\infty$, we assume by contradiction that there exists~$C>0$ such that, for all~$u\in W^{s,p}(\R^N)$,
\begin{equation}\label{f43oytghvdsklhgerio5y44}
\|u\|_{W^{\widetilde s,\widetilde p}(\R^N)}\le C \|u\|_{W^{s,p}(\R^N)}.
\end{equation}

For any~$\varepsilon\in(0,1)$ we define the function
\begin{equation}\label{vvarepsilon4}
v_\varepsilon(x):=\varepsilon^{s-\frac{N}{p}} u\left(\frac{x}{\varepsilon}\right).
\end{equation}
By~\eqref{lemmino01} (used here with~$\gamma:=s-N/p$ and~$\beta:=-1$), we infer that
\[
\|v_\varepsilon\|_{W^{s,p}(\R^N)}=\left(\varepsilon^{sp}\,\|u\|_{L^p(\R^N)}^p+ [u]_{W^{s,p}(\R^N)}^p \right)^\frac{1}{p}
\]
and
\[
\|v_\varepsilon\|_{W^{\widetilde s,\widetilde p}(\R^N)}=\varepsilon^{-\frac{N}{p}+\frac{N}{\widetilde p}+s-\widetilde s}\left(\varepsilon^{\widetilde s \widetilde p}\|u\|_{L^{\widetilde p}(\R^N)}^{\widetilde p}+ [u]_{W^{\widetilde s,\widetilde p}(\R^N)}^{\widetilde p} \right)^\frac{1}{\widetilde p}.
\]
Thus, plugging $v_\varepsilon$ into~\eqref{f43oytghvdsklhgerio5y44}, we get
\[
\varepsilon^{-\frac{N}{p}+\frac{N}{\widetilde p}+s-\widetilde s}
\left(\varepsilon^{\widetilde s \widetilde p}\,\|u\|_{L^{\widetilde p}(\R^N)}^{\widetilde p}+ [u]_{W^{\widetilde s,\widetilde p}(\R^N)}^{\widetilde p} \right)^\frac{1}{\widetilde p}\le C
\left(\varepsilon^{sp}\,\|u\|_{L^p(\R^N)}^p+[u]_{W^{s,p}(\R^N)}^p \right)^\frac{1}{p}.
\]
We notice that, in both the cases in~\eqref{parentesilemma4},
\begin{equation}\label{esponentenegativo2}
-\frac{N}{p}+\frac{N}{\widetilde p}+s-\widetilde s<0.
\end{equation}
Hence, taking the limit as~$\varepsilon\searrow 0$, we obtain the desired contradiction.

We now address the case $\widetilde p=+\infty$.  Assume by contradiction that there exists~$C>0$ such that, for all~$u\in W^{s,p}(\R^N)$,
\begin{equation}\label{f43oytghvdsklhgerio5y45}
\|u\|_{C^{0, \widetilde s}(\R^N)}\le C \|u\|_{W^{s,p}(\R^N)}.
\end{equation}
For any $\varepsilon\in(0,1)$, we recall the definition in~\eqref{vvarepsilon4} and we exploit~\eqref{lemmino01bis}
(with~$\gamma:=s-N/p$ and~$\beta:=-1$)
to see that
\[
\|v_\varepsilon\|_{C^{0, \widetilde s}(\R^N)} =\varepsilon^{s-\frac{N}{p}} \|u\|_{L^\infty(\R^N)} + \varepsilon^{s-\widetilde s -\frac{N}{p}}  \sup_{\substack{x,y\in\R^N\\x\ne y}}\dfrac{|u(x) - u(y)|}{|x-y|^{\widetilde s}}.
\]
Thus, using also~\eqref{lemmino01} and plugging~$v_\varepsilon$ into~\eqref{f43oytghvdsklhgerio5y45}, we obtain that
\[
\varepsilon^{s-\frac{N}{p}} \|u\|_{L^\infty(\R^N)} + \varepsilon^{s-\widetilde s -\frac{N}{p}}  \sup_{\substack{x,y\in\R^N\\x\ne y}}\dfrac{|u(x) - u(y)|}{|x-y|^{\widetilde s}}\le C
\left(\varepsilon^{sp}\,\|u\|_{L^p(\R^N)}^p+[u]_{W^{s,p}(\R^N)}^p \right)^\frac{1}{p}.
\]
We notice that, in both the cases in~\eqref{parentesilemma4},
\begin{equation}\label{esponentenegativo3}
\widetilde s> s-\frac{N}{p}.
\end{equation}
Hence, taking the limit as~$\varepsilon\searrow 0$, we obtain the desired contradiction.

Now, let~$\Omega\subset\R^N$ be any open bounded domain with Lipschitz boundary.
Up to a translation, we can assume~$0\in \Omega$.
Let~$R>0$ be such that~$B_R\subset \Omega$ and take~$u:\R^N\to \R$ measurable such that~$u=0$ in $\R^N\setminus B_R$,
$u$ does not vanish identically and~$[u]_{W^{s,p}(\R^N)}<+\infty$.
For any~$\varepsilon\in (0,1)$, let
\[
v_{\varepsilon}(x):= \varepsilon^{s-\frac{N}{p}} u\left(\frac{x}{\varepsilon}\right).
\]
Suppose by contradiction that there exists~$C>0$ such that, for all~$u\in W^{s,p}(\Omega)$, 
\[
\|u\|_{W^{\widetilde s,\widetilde p}(\Omega)}\le C \|u\|_{W^{s,p}(\Omega)}.
\]
We exploit Lemma~\ref{lemmino0Omega} with~$\gamma:= s-\frac{N}{p}$
and~$\beta:=-1$. Hence, if~$\widetilde p\ne +\infty$,
\[
\|v_\varepsilon\|_{W^{\widetilde s,\widetilde p}(\Omega)}\ge \varepsilon^{-\frac{N}{p}+\frac{N}{\widetilde p}+s-\widetilde s}
\left(\varepsilon^{\widetilde s \widetilde p}\,\|u\|_{L^{\widetilde p}(\Omega)}^{\widetilde p}+ [u]_{W^{\widetilde s,\widetilde p}(B_R)}^{\widetilde p} \right)^\frac{1}{\widetilde p}
\]
and
\[
\|v_\varepsilon\|_{W^{\widetilde s,\widetilde p}(\Omega)}\le C \|v_\varepsilon\|_{W^{s,p}(\Omega)}\le C\left(\varepsilon^{sp}\,\|u\|_{L^p(\Omega)}^p+[u]_{W^{s,p}(\R^N)}^p \right)^\frac{1}{p}.
\]
The last two displays show that
\[
\varepsilon^{-\frac{N}{p}+\frac{N}{\widetilde p}+s-\widetilde s}
\left(\varepsilon^{\widetilde s \widetilde p}\,\|u\|_{L^{\widetilde p}(\Omega)}^{\widetilde p}+ [u]_{W^{\widetilde s,\widetilde p}(B_R)}^{\widetilde p} \right)^\frac{1}{\widetilde p}\le C\left(\varepsilon^{sp}\,\|u\|_{L^p(\Omega)}^p+[u]_{W^{s,p}(\R^N)}^p \right)^\frac{1}{p}.
\]
Recalling~\eqref{esponentenegativo2} and taking the limit as~$\varepsilon\searrow 0$, we get the desired contradiction.

If instead~$\widetilde p = +\infty$,
\[
\begin{split}
&\varepsilon^{s -\frac{N}{p}}\|u\|_{L^\infty(\Omega)}+\varepsilon^{s-\widetilde s -\frac{N}{p}}\sup_{\substack{x,y\in B_R \\x\ne y}}\frac{|u(x)-u(y)|}{|x-y|^{\widetilde s}}\le \|v_\varepsilon\|_{C^{0, \widetilde s}(\Omega)}\\
&\qquad\le C \|v_\varepsilon\|_{W^{s,p}(\Omega)}\le C\left(\varepsilon^{sp}\,\|u\|_{L^p(\Omega)}^p+[u]_{W^{s,p}(\R^N)}^p \right)^\frac{1}{p}.
\end{split}
\]
Recalling \eqref{esponentenegativo3} and taking the limit as $\varepsilon\searrow 0$, we get the desired contradiction in this case as well.
\end{proof}

\subsection{A useful compact embedding}
We now show that, when~$p\in [1, +\infty)$ and~$\widetilde s\in(0, s)$, the embedding of~$W^{s,p}(\Omega)$ in~$W^{\widetilde{s},p}(\Omega)$ is compact.
This result will be the core of the proof of Theorem~\ref{teorema3sp=N} but it will be also crucial in the proof of the optimality statements for Theorems~\ref{teorema2}, \ref{teorema2sp=N} and~\ref{teorema2sp>N}.

\begin{lemma}\label{orizzontalecompatto}
Let~$s\in (0,1]$ and~$p\in [1, +\infty)$. Let~$\Omega\subset \R^N$ be a bounded Lipschitz domain. Let~$\widetilde{s}\in[0,s)$.

Then, the space~$W^{s,p}(\Omega)$ is compactly embedded in~$W^{\widetilde{s},p}(\Omega)$.
\end{lemma}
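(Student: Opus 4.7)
The plan is to run a Fréchet--Kolmogorov style argument by exploiting the ``gap'' $s-\widetilde s>0$ to convert control of the Gagliardo seminorm at scale $s$ into control at the weaker scale $\widetilde s$, combined with the classical compact embedding $W^{s,p}(\Omega)\hookrightarrow L^p(\Omega)$ (which corresponds exactly to the case $\widetilde s=0$).

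Concretely, let $\{u_n\}\subset W^{s,p}(\Omega)$ be a sequence bounded in norm by some constant $M$. The classical Rellich--Kondrachov type embedding for Lipschitz domains (see, e.g., \cite{MR2944369}) gives a subsequence, still denoted $\{u_n\}$, converging in $L^p(\Omega)$ to some $u\in L^p(\Omega)$. This already settles the case $\widetilde s=0$. I will show that if $\widetilde s\in(0,s)$ and $s\in(0,1)$, then $\{u_n\}$ is in fact Cauchy in $W^{\widetilde s,p}(\Omega)$. Set $w_{n,m}:=u_n-u_m$ and, for $\delta\in(0,1)$, split the Gagliardo double integral for $[w_{n,m}]_{W^{\widetilde s,p}(\Omega)}^p$ according to whether $|x-y|<\delta$ or $|x-y|\ge\delta$; call these contributions $I_1(\delta)$ and $I_2(\delta)$ respectively (up to the normalizing constant $c_{N,\widetilde s,p}$).

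For the near-diagonal part $I_1(\delta)$, write
\[
\frac{|w_{n,m}(x)-w_{n,m}(y)|^p}{|x-y|^{N+\widetilde s p}}
=\frac{|w_{n,m}(x)-w_{n,m}(y)|^p}{|x-y|^{N+ sp}}\,|x-y|^{(s-\widetilde s)p},
\]
so, since $s>\widetilde s$,
\[
I_1(\delta)\le \delta^{(s-\widetilde s)p}\,\frac{1}{c_{N,s,p}}\,[w_{n,m}]_{W^{s,p}(\Omega)}^p\le C(M)\,\delta^{(s-\widetilde s)p}.
\]
For the far-diagonal part $I_2(\delta)$, use $|w_{n,m}(x)-w_{n,m}(y)|^p\le 2^{p-1}(|w_{n,m}(x)|^p+|w_{n,m}(y)|^p)$ and Fubini, together with the computation
\[
\int_{\{|z|\ge\delta\}}\frac{dz}{|z|^{N+\widetilde s p}}=\frac{\omega_{N-1}}{\widetilde s p}\,\delta^{-\widetilde s p},
\]
where $\omega_{N-1}$ denotes the $(N-1)$-dimensional measure of the unit sphere, which is finite precisely because $\widetilde s>0$. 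This yields
\[
I_2(\delta)\le C_N\,\delta^{-\widetilde s p}\,\|w_{n,m}\|_{L^p(\Omega)}^p.
\]

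Given $\epsilon>0$, I first choose $\delta$ so small that $I_1(\delta)<\epsilon/2$ (depending only on $M$ and the gap $s-\widetilde s$), and then, using $\|u_n-u_m\|_{L^p(\Omega)}\to 0$ from the Rellich--Kondrachov step, pick $n,m$ large so that $I_2(\delta)<\epsilon/2$. This proves that $\{u_n\}$ is Cauchy in $W^{\widetilde s,p}(\Omega)$ and hence convergent, establishing compactness when $s\in(0,1)$. The case $s=1$ is reduced to the previous one: one picks any $s'\in(\widetilde s,1)$ and uses the (classical) continuous embedding $W^{1,p}(\Omega)\hookrightarrow W^{s',p}(\Omega)$ together with the compact embedding $W^{s',p}(\Omega)\hookrightarrow W^{\widetilde s,p}(\Omega)$ just proved.

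The only genuinely delicate point is the integrability of $|z|^{-N-\widetilde s p}$ at infinity in $I_2(\delta)$: this is where the assumption $\widetilde s>0$ (handled implicitly together with $\widetilde s=0$ via the standard compact embedding into $L^p$) plays its role. Everything else is a clean application of the two-scale splitting together with the boundedness of $[u_n]_{W^{s,p}(\Omega)}$ and the $L^p$-convergence of the subsequence.
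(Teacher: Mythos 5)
Your proof is correct, but it takes a genuinely different route from the paper's. The paper proves this compact embedding by invoking the Brezis--Mironescu interpolation theorem (Theorem~\ref{THMbrezismironuescu}) with $s_1=0$, $s_2=s$, $p_1=p_2=p$, which yields $\|u\|_{W^{\widetilde s, p}(\Omega)} \le C\,\|u\|_{L^p(\Omega)}^{\widetilde\theta}\,\|u\|_{W^{s,p}(\Omega)}^{1-\widetilde\theta}$ with $\widetilde\theta := 1 - \widetilde s/s$, and then combines this with the Rellich--Kondrachov compact embedding of $W^{s,p}(\Omega)$ into $L^p(\Omega)$. You instead replace the black-box interpolation inequality with a direct two-scale decomposition of the Gagliardo kernel: the near-diagonal region $\{|x-y|<\delta\}$ is absorbed into the $W^{s,p}$-seminorm via the factor $|x-y|^{(s-\widetilde s)p}\le\delta^{(s-\widetilde s)p}$, and the far-diagonal region $\{|x-y|\ge\delta\}$ is absorbed into the $L^p$-norm via the integrability of $|z|^{-N-\widetilde s p}$ away from the origin. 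Optimizing your bound over $\delta$ in fact re-derives exactly the $p_1=p_2$ special case of the Brezis--Mironescu inequality, so the two proofs are close cousins: yours is more elementary and self-contained (it avoids interpolation machinery entirely), while the paper's is shorter because it already imports Theorem~\ref{THMbrezismironuescu} for the other main results. Your reduction of the endpoint case $s=1$ to some intermediate $s'\in(\widetilde s,1)$, via the classical continuous embedding $W^{1,p}(\Omega)\hookrightarrow W^{s',p}(\Omega)$ for bounded Lipschitz domains, is a clean way to avoid having to control $\nabla u$ directly in the near-diagonal estimate.
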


\begin{proof}
If~$\widetilde s=0$, then~$W^{s,p}(\Omega)$ is compactly embedded in~$L^{p}(\Omega)$, thanks to~\cite[Theorem~7.1]{MR2944369}.
This is the desired claim in this case. 

If~$\widetilde{s}\in(0,s)$, there exists~$\widetilde{\theta}\in (0,1)$ such that~$\widetilde{s}=(1-\widetilde{\theta})s$. Hence, we can apply~\cite[Theorem~1]{MR3813967} (recalled in the forthcoming Theorem~\ref{THMbrezismironuescu}) with~$s_1=0$, $s_2=s$ 
and~$p_1=p_2=p$ to see that there exists a constant~$C>0$ such that, for any~$u\in W^{s,p}(\Omega)$,
\begin{equation}\label{interpolazioneorizzontale}
\|u\|_{W^{\widetilde{s},p}(\Omega)}\leq C \|u\|_{L^p(\Omega)}^{\widetilde{\theta}}\|u\|_{W^{s,p}(\Omega)}^{1-\widetilde{\theta}}.
\end{equation}

Moreover, we know that the space~$W^{s,p}(\Omega)$ is compactly embedded in~$L^p(\Omega)$ (see e.g.~\cite[Theorem~7.1]{MR2944369}). Namely, for any sequence~$u_n$ in~$W^{s,p}(\Omega)$ which converges weakly to some~$u$ in~$W^{s,p}(\Omega)$, we have that~$u_n$ converges strongly to~$u$ in~$L^p(\Omega)$, as~$n\to+\infty$.

Therefore, from~\eqref{interpolazioneorizzontale} we infer that
\[
\|u_n-u\|_{W^{\widetilde{s},p}(\Omega)}\leq C \|u_n-u\|_{L^p(\Omega)}^{\widetilde{\theta}}\|u_n-u\|_{W^{s,p}(\Omega)}^{1-\widetilde{\theta}}.
\]
This implies that
\[
\lim_{n\to +\infty}\|u_n-u\|_{W^{\widetilde{s},p}(\Omega)}=0,
\]
which proves the desired compact embedding of~$W^{s,p}(\Omega)$ in~$W^{\widetilde{s},p}(\Omega)$.
\end{proof}

\subsection{Optimality of Theorems~\ref{teorema1} and~\ref{teorema2} and Corollary~\ref{corollariocompattezza}}

We are now in the position to consider the case~$sp<N$ and to show that Theorems~\ref{teorema1} and~\ref{teorema2} and Corollary~\ref{corollariocompattezza} are optimal.

\begin{proposition}\label{Thm1optimal}
Theorem~\ref{teorema1} is optimal, namely if~$\widetilde s$ and~$ \widetilde p$ do not satisfy~\eqref{insiemeimmersioneRN}, then the space $W^{s, p}(\R^N)$ is not continuously embedded in $W^{\widetilde s,  \widetilde p}(\R^N)$.
\end{proposition}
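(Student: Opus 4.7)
The plan is to carry out a case analysis according to how the pair $(\widetilde s, \widetilde p)$ fails to satisfy~\eqref{insiemeimmersioneRN}, and to quote either Lemma~\ref{lemmino1} or Lemma~\ref{lemmino2} in each case; no new scaling construction is needed, since the analytic content has already been isolated in those two lemmas.

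First I would observe that a violation of~\eqref{insiemeimmersioneRN} must fall into one of three mutually exhaustive regimes: either (A) $1\le \widetilde p<p$; or (B) $\widetilde p\ge p$ and $s<\widetilde s\le 1$; or (C) $\widetilde p\ge p$, $0\le \widetilde s\le s$, and $\widetilde p>\frac{Np}{N-(s-\widetilde s)p}$. Indeed, any admissible pair $(\widetilde s,\widetilde p)\in[0,1]\times[1,+\infty]$ that is not in the set described by~\eqref{insiemeimmersioneRN} either has $\widetilde p<p$ (case A), or has $\widetilde p\ge p$ together with $\widetilde s>s$ (case B), or has $\widetilde p\ge p$ and $0\le\widetilde s\le s$ with $\widetilde p$ exceeding the upper threshold (case C).

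Then I would conclude as follows. Case (A) is immediate from Lemma~\ref{lemmino1}, since the hypothesis~\eqref{parentesilemma1} imposes exactly $0\le\widetilde s\le 1$ and $1\le\widetilde p<p$. Case (B) matches the first alternative of~\eqref{parentesilemma2}, so Lemma~\ref{lemmino2} yields the failure of the embedding. Case (C) matches the second alternative of~\eqref{parentesilemma2}, so Lemma~\ref{lemmino2} applies again; note that under the standing assumption $sp<N$ and $\widetilde s\le s$ the threshold $\frac{Np}{N-(s-\widetilde s)p}$ is finite, so the extreme case $\widetilde p=+\infty$ is naturally subsumed in regime (C).

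The main obstacle is purely one of bookkeeping: verifying that the three regimes above genuinely exhaust every violation of~\eqref{insiemeimmersioneRN}, including the degenerate boundary situations (for instance, when $\widetilde s=s$ the admissible range for $\widetilde p$ collapses to the single value $p$, so any $\widetilde p\ne p$ falls into case (A) or case (C); and the constraint $\widetilde s\ge 0$ is built into the definition of~$W^{\widetilde s,\widetilde p}(\R^N)$). Once this check is made, the proposition follows at once.
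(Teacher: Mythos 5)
Your proposal is correct and matches the paper's own proof essentially verbatim: both partition the complement of~\eqref{insiemeimmersioneRN} into the subcase~$\widetilde p<p$ (handled by Lemma~\ref{lemmino1} via~\eqref{parentesilemma1}) and the two alternatives of~\eqref{parentesilemma2} (handled by Lemma~\ref{lemmino2} with~$\Omega=\R^N$). The bookkeeping check you flag is carried out implicitly in the paper in the same way, including the absorption of~$\widetilde p=+\infty$ into the upper-threshold case.
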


\begin{proof}
We notice that if~$\widetilde s$ and~$ \widetilde p$ do not satisfy~\eqref{insiemeimmersioneRN}, then one of the following holds:
\begin{equation*}
{\mbox{either }}\quad
\begin{cases}
0\le\widetilde{s}\le 1,\\
1\le\widetilde{p}<p,
\end{cases} \quad\mbox{ or }\quad\begin{cases}
s <\widetilde s \le 1,\\
p\le\widetilde p\le +\infty,
\end{cases}\quad\mbox{ or }\quad \begin{cases}
0\le\widetilde s \le s,\\
\frac{Np}{N-(s-\widetilde s)p}<\widetilde p\le +\infty,
\end{cases}
\end{equation*}
namely either~\eqref{parentesilemma1} or~\eqref{parentesilemma2} holds true.

Now, if~\eqref{parentesilemma1} holds, then the desired result follows from Lemma~\ref{lemmino1}, while 
if~\eqref{parentesilemma2} is verified, then
Lemma~\ref{lemmino2} in the case~$\Omega=\R^N$ yields the desired result.
\end{proof}

\begin{proposition}\label{Thm2optimal}
Theorem~\ref{teorema2} is optimal, namely if~$\widetilde s$ and~$ \widetilde p$ do not satisfy~\eqref{insiemeimmersioneOmega}, then the space $W^{s, p}(\Omega)$ is not continuously embedded in $W^{\widetilde s,  \widetilde p}(\Omega)$.
\end{proposition}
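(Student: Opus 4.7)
The plan is to parallel the proof of Proposition~\ref{Thm1optimal}, splitting the failure of \eqref{insiemeimmersioneOmega} into subcases that fit the framework of Lemma~\ref{lemmino2} for bounded Lipschitz domains. Concretely, I would first set aside the trivial cases recorded in the footnote to Theorem~\ref{teorema2} (namely $\widetilde s=s$ with $\widetilde p=p$, and $\widetilde s=s=1$ with $1\le\widetilde p\le p$), so that the negation of \eqref{insiemeimmersioneOmega} reduces to: either
\[
s<\widetilde s\le 1\quad\text{with}\quad p\le\widetilde p\le+\infty,
\]
or
\[
0\le\widetilde s\le s\quad\text{with}\quad \widetilde p>\frac{Np}{N-(s-\widetilde s)p}.
\]
These are precisely the two alternatives in \eqref{parentesilemma2}.

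Next, I would invoke Lemma~\ref{lemmino2} in its bounded-domain form in each of these two subcases. The rescaled concentration family $v_\varepsilon(x):=\varepsilon^{-(N-sp)/p}u(x/\varepsilon)$, built from a fixed $u$ supported in a ball $B_R\subset\Omega$ with finite $W^{s,p}(\R^N)$ seminorm, has $\|v_\varepsilon\|_{W^{s,p}(\Omega)}$ bounded while $\|v_\varepsilon\|_{W^{\widetilde s,\widetilde p}(\Omega)}$ diverges as $\varepsilon\searrow 0$, thanks to the strict negativity of the exponent $-N/p+N/\widetilde p+s-\widetilde s$ recorded in~\eqref{esponentenegativo}. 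Hypothesizing a continuous embedding $W^{s,p}(\Omega)\hookrightarrow W^{\widetilde s,\widetilde p}(\Omega)$ would therefore produce the required contradiction in both alternatives, exactly as in the proof of Proposition~\ref{Thm1optimal}, except that now all the scaling computations are performed on $\Omega$ via Lemma~\ref{lemmino0Omega} instead of on $\R^N$ via Lemma~\ref{lemmino0}.

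The main subtlety I anticipate is the marginal subregion $s<\widetilde s\le 1$ with $1\le\widetilde p<p$, which is not literally captured by \eqref{parentesilemma2}: when the integrability jump $N/\widetilde p-N/p$ exceeds the regularity gain $\widetilde s-s$, the scaling exponent above is no longer negative and the rescaling argument breaks down. To handle this corner, I would resort to a high-frequency oscillation test: fix $\phi\in C^\infty_c(\Omega)$ and set $v_n(x):=\phi(x)\cos(n x_1)$. A routine splitting at $|x-y|=1/n$ yields $\|v_n\|_{W^{s,p}(\Omega)}\le C_1 n^s$ and $[v_n]_{W^{\widetilde s,\widetilde p}(\Omega)}\ge c_2 n^{\widetilde s}$ for $n$ sufficiently large, so the strict inequality $\widetilde s>s$ forces $\|v_n\|_{W^{\widetilde s,\widetilde p}(\Omega)}/\|v_n\|_{W^{s,p}(\Omega)}\to+\infty$, ruling out any continuous embedding in this remaining regime. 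This exhausts every case and establishes the optimality of Theorem~\ref{teorema2}.
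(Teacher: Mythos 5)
Your decomposition of the negation of~\eqref{insiemeimmersioneOmega} is incomplete, and the missing piece turns out to be the hardest case. After discarding the footnote exceptions, the negation of~\eqref{insiemeimmersioneOmega} contains the region $\widetilde s=s<1$, $1\le\widetilde p<p$, which is covered neither by~\eqref{parentesilemma2} (the second alternative there reduces to $\widetilde p>p$ when $\widetilde s=s$) nor by your high-frequency correction. The scaling exponent $-N/p+N/\widetilde p+s-\widetilde s$ is \emph{positive} there (it equals $N(1/\widetilde p-1/p)>0$), so the dilation family from Lemma~\ref{lemmino0Omega} produces a sequence that actually goes to zero in $W^{\widetilde s,\widetilde p}(\Omega)$ and yields no contradiction; and your oscillation family $v_n=\phi\cos(nx_1)$ gives $\|v_n\|_{W^{\widetilde s,\widetilde p}}\approx n^{\widetilde s}=n^s\approx\|v_n\|_{W^{s,p}}$, so the ratio stays bounded. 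This is not an incidental omission: the failure of the embedding $W^{s,p}(\Omega)\hookrightarrow W^{s,\widetilde p}(\Omega)$ for $\widetilde p<p$, $s\in(0,1)$ on a bounded domain is a genuinely subtle result that cannot be obtained from either dilations or single-frequency oscillations. The paper handles it by invoking the Mironescu--Sickel non-embedding theorem (\cite[Theorem~1.1]{MR3357858}), which is essential here.

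On the remaining corner $s<\widetilde s\le 1$, $1\le\widetilde p<p$, your route genuinely diverges from the paper's. The paper argues by contradiction: assuming $W^{s,p}(\Omega)\hookrightarrow W^{\widetilde s,\widetilde p}(\Omega)$ and composing with the compact embedding $W^{\widetilde s,\widetilde p}(\Omega)\hookrightarrow W^{s,\widetilde p}(\Omega)$ from Lemma~\ref{orizzontalecompatto}, one obtains $W^{s,p}(\Omega)\hookrightarrow W^{s,\widetilde p}(\Omega)$ with $\widetilde p<p$, again contradicting \cite[Theorem~1.1]{MR3357858}. Your oscillation argument is a plausible and more elementary alternative for this sub-case, but it requires verifying the lower bound $[\phi\cos(n\cdot)]_{W^{\widetilde s,\widetilde p}(\Omega)}\gtrsim n^{\widetilde s}$ (not just the easier upper bound on the $W^{s,p}$ side), which you state without proof; that bound is true but is not a one-line observation for general $\widetilde p$ and $\widetilde s\in(0,1)$. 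In any case, once the diagonal case $\widetilde s=s$ must be brought in via Mironescu--Sickel, it is more economical to use the paper's compactness reduction so that the same citation disposes of the entire strip $\widetilde s\ge s$, $\widetilde p<p$ at once. As written, your proof does not establish the proposition.
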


\begin{proof}
We notice that, if~$\widetilde s$ and~$\widetilde p$ do not satisfy~\eqref{insiemeimmersioneOmega}, then one of the following holds:
\begin{equation}\label{trepossibilita2}
{\mbox{either }}\quad
\begin{cases}
s\le\widetilde{s}\le 1,\\
1\le\widetilde{p}<p,
\end{cases} \quad\mbox{ or }\quad\begin{cases}
s <\widetilde s \le 1,\\
p\le\widetilde p\le +\infty,
\end{cases}\quad\mbox{ or }\quad \begin{cases}
0\le\widetilde s \le s,\\
\frac{Np}{N-(s-\widetilde s)p}<\widetilde p\le +\infty.
\end{cases}
\end{equation}
We notice that the second and third cases in~\eqref{trepossibilita2} coincide with~\eqref{parentesilemma2}. Hence, if this is the case, the desired result follows by Lemma~\ref{lemmino2}.

If not, the first case in~\eqref{trepossibilita2} occurs. We distinguish two cases. If~$\widetilde s = s$, then the desired result can be deduced by~\cite[Theorem~1.1]{MR3357858}.

If instead~$\widetilde s \in( s,1]$, then we assume by contradiction that~$W^{s, p}(\Omega)$ is continuously embedded in~$W^{\widetilde s, \widetilde p}(\Omega)$.
Moreover, by Lemma~\ref{orizzontalecompatto}, we have that~$W^{\widetilde s, \widetilde p}(\Omega)$ is compactly embedded 
in~$W^{s, \widetilde p}(\Omega)$.
Accordingly, we obtain that~$W^{s, p}(\Omega)$ is continuously embedded in~$W^{s, \widetilde p}(\Omega)$,
which is in contradiction with~\cite[Theorem~1.1]{MR3357858}.
\end{proof}

\begin{proposition}\label{Thm3optimal}
Corollary~\ref{corollariocompattezza} is optimal,
namely if~$\widetilde s$ and~$ \widetilde p$ do not satisfy~\eqref{insiemeimmersionecompatta2}, then the space $W^{s, p}(\Omega)$ is not compactly embedded in $W^{\widetilde s,  \widetilde p}(\Omega)$.
\end{proposition}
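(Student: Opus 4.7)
The plan is to distinguish two regimes according to whether $(\widetilde s,\widetilde p)$ already violates the continuous-embedding condition~\eqref{insiemeimmersioneOmega}. If so, Proposition~\ref{Thm2optimal} guarantees that even the continuous embedding fails, so compactness is automatically excluded; this disposes of all the cases $\widetilde s>s$, the cases $\widetilde s = s$ with $\widetilde p\ne p$, and the cases $\widetilde s < s$ with $\widetilde p > \tfrac{Np}{N-(s-\widetilde s)p}$.

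The residual pairs satisfy~\eqref{insiemeimmersioneOmega} but not~\eqref{insiemeimmersionecompatta2}. Comparing the two sets of inequalities, these are exactly the critical-exponent situations $0\le\widetilde s<s$ with $\widetilde p=\tfrac{Np}{N-(s-\widetilde s)p}$, together with the trivial identity case $(\widetilde s,\widetilde p)=(s,p)$. For these I would exhibit a concentrating sequence: assuming up to translation that $0\in\Omega$, fixing $R>0$ with $B_R\subset\Omega$ and picking any nonzero $u\in C_c^\infty(B_{R/2})$, I would define, for $\varepsilon\in(0,1)$,
\[
v_\varepsilon(x):=\varepsilon^{\frac{sp-N}{p}}\,u\!\left(\tfrac{x}{\varepsilon}\right).
\]

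The core computation, carried out by invoking Lemma~\ref{lemmino0Omega} with $\gamma:=(sp-N)/p$, reduces to two scaling identities. First, $\gamma p + N - sp = 0$ traps $[v_\varepsilon]_{W^{s,p}(\Omega)}$ between positive multiples of $[u]_{W^{s,p}(B_R)}$ and $[u]_{W^{s,p}(\R^N)}$, while $\|v_\varepsilon\|_{L^p(\Omega)}=\varepsilon^{s}\|u\|_{L^p}\to 0$; hence $\{v_\varepsilon\}$ is bounded in $W^{s,p}(\Omega)$. Second, the critical relation $\widetilde p=\tfrac{Np}{N-(s-\widetilde s)p}$ is equivalent to $\gamma\widetilde p+N-\widetilde s\widetilde p=0$, which by Lemma~\ref{lemmino0Omega} produces a uniform positive lower bound $\|v_\varepsilon\|_{W^{\widetilde s,\widetilde p}(\Omega)}\ge c>0$ (contributed by the seminorm piece when $\widetilde s>0$, and by the scale-invariant $L^{\widetilde p}$-piece when $\widetilde s=0$); the same lower bound works in the identity case $\widetilde s=s$, $\widetilde p=p$.

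Finally, since the supports of $v_\varepsilon$ shrink to $\{0\}$, we have $v_\varepsilon\to 0$ almost everywhere in $\Omega$, so any subsequential limit of $\{v_\varepsilon\}$ in $W^{\widetilde s,\widetilde p}(\Omega)$ would, through the continuous embedding into $L^{\widetilde p}(\Omega)$, be forced to equal zero. That would entail $\|v_\varepsilon\|_{W^{\widetilde s,\widetilde p}(\Omega)}\to 0$, contradicting the uniform lower bound. The main obstacle is the clean verification of the two scaling identities at the critical exponent; once these are in hand, the concentration argument runs in the same spirit as in Lemmas~\ref{lemmino2}--\ref{lemmino4}.
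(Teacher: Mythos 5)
Your proposal is correct and follows essentially the same route as the paper's own proof: reduce to the non-continuous-embedding regime via Proposition~\ref{Thm2optimal}, then kill compactness on the remaining critical curve $\widetilde p=\frac{Np}{N-(s-\widetilde s)p}$ with the concentrating sequence $v_\varepsilon(x)=\varepsilon^{(sp-N)/p}u(x/\varepsilon)$, using Lemma~\ref{lemmino0Omega} to get boundedness in $W^{s,p}(\Omega)$ and a uniform lower bound in $W^{\widetilde s,\widetilde p}(\Omega)$, together with $v_\varepsilon\to 0$ a.e.\ to force any subsequential limit to vanish. The only cosmetic difference is that you flag the identity pair $(\widetilde s,\widetilde p)=(s,p)$ explicitly (the paper relegates it to a footnote), and you take $u\in C^\infty_c(B_{R/2})$ rather than $C^\infty_0(B_R)$, neither of which affects the argument.
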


\begin{proof}
In light of Proposition~\ref{Thm2optimal}, if~$\widetilde s$ and~$ \widetilde p$ do not satisfy~\eqref{insiemeimmersioneOmega}, then~$W^{s, p}(\Omega)$ is not compactly embedded in~$W^{\widetilde s, \widetilde p}(\Omega)$.
Thus, it only remains to show that the points which belong to the critical curve are such that the compact embedding stated in Corollary~\ref{corollariocompattezza} does not hold.

To this purpose, let~$s\in (0,1]$ and~$p\in [1, +\infty)$ be such that~$sp<N$. Let~$\widetilde s$ and~$\widetilde p$ satisfy
\begin{equation}\label{stildeptilde}
0\le \widetilde s \le s\qquad\mbox{ and }\qquad 
\frac{Np}{N-sp}=\frac{N\widetilde p}{N-\widetilde s\widetilde p.}
\end{equation}
Notice that the second condition in~\eqref{stildeptilde}
means that the fractional Sobolev exponents for the couples~$(s,p)$
and~$(\widetilde s,\widetilde p)$ coincide.

Up to translation, suppose that~$0\in\Omega$ and let~$R>0$ be such that~$B_R\subset\Omega$. Let~$u\in C^\infty_0(B_R)\setminus\{0\}$ and, for~$\varepsilon\in(0,1)$, let
\[
v_{\varepsilon}(x):= \varepsilon^{-\frac{N-sp}{p}} u\left(\frac{x}{\varepsilon}\right).
\]

We remark that, for all~$x\in B_R\setminus\{0\}$,
there exists~$\varepsilon_0\in(0,1)$, depending on~$x$,
such that if~$\varepsilon\in(0,\varepsilon_0]$
we have that~$|x|/\varepsilon>R$, and therefore~$v_{\varepsilon}(x)=0$.

This says that 
\begin{equation}\label{vepstozero}
{\mbox{$v_\varepsilon\to 0$ a.e. in~$\Omega$ as~$\varepsilon\searrow0$.}}
\end{equation}

Furthermore,
formulas~\eqref{agg00} and~\eqref{zse4rdfrtgyujhio1}
(used here with~$\gamma:=-(N-sp)/p$)
give that
\[
\|v_{\varepsilon}\|^p_{L^p(\Omega)} = \varepsilon^{sp} \|u\|^p_{L^p(\Omega)}\qquad
{\mbox{and}}\qquad
[v_{\varepsilon}]^p_{W^{s, p}(\Omega)} \le [u]^p_{W^{s, p}(\R^N)},
\]
which in turn imply that~$v_{\varepsilon}$ is bounded in~$W^{s, p}(\Omega)$. 

Thus, if the embedding of~$W^{s, p}(\Omega)$ into~$W^{\widetilde s, \widetilde p}(\Omega)$ were compact, up to a subsequence, $v_\varepsilon$ would converge to some~$v$ in~$W^{\widetilde s, \widetilde p}(\Omega)$, and therefore also pointwise in~$\Omega$.
In light of~\eqref{vepstozero}, we would then have that
\begin{equation}\label{4567328zxcvbnlkjhgfdsaqwertyuio098765432}
{\mbox{$v_\varepsilon$ converges to~$0$ in~$W^{\widetilde s, \widetilde p}(\Omega)$ as~$\varepsilon\searrow0$.}}
\end{equation}

On the other hand, formula~\eqref{agg00}, together
with the second assumption in~\eqref{stildeptilde}, gives that
\begin{equation*}
\|v_{\varepsilon}\|^{\widetilde p}_{L^{\widetilde p}(\Omega)}  = \varepsilon^{\widetilde p \widetilde s} \|u\|^{\widetilde p}_{L^{\widetilde p}(\Omega)}\qquad\mbox{ and }\qquad 
[v_{\varepsilon}]^{\widetilde p}_{W^{\widetilde s, \widetilde p}(\Omega)}
\ge [u]^{\widetilde p}_{W^{\widetilde s, \widetilde p}(B_R)} 
.
\end{equation*}
Hence,
\[
\lim_{\varepsilon\to 0}\left(\|v_{\varepsilon}\|^{\widetilde p}_{L^{\widetilde p}(\Omega)} + [v_{\varepsilon}]^{\widetilde p}_{W^{\widetilde s, \widetilde p}(\Omega)} \right)\ge
[u]^{\widetilde p}_{W^{\widetilde s, \widetilde p}(B_R)}>0.
\]
This is in contradiction with~\eqref{4567328zxcvbnlkjhgfdsaqwertyuio098765432},
and therefore the desired claim is established.
\end{proof}

\subsection{Optimality of Theorems~\ref{teorema1sp=N}, \ref{teorema2sp=N} and~\ref{teorema3sp=N}}

We now address the case~$sp=N$ and we show that Theorems~\ref{teorema1sp=N}, \ref{teorema2sp=N} and~\ref{teorema3sp=N} are optimal. 

\begin{proposition}\label{Thm5optimal}
Theorem~\ref{teorema1sp=N} is optimal, namely if~$\widetilde s$ and~$ \widetilde p$ do not satisfy~\eqref{insiemeimmersioneRNsp=N}, then the space $W^{s, p}(\R^N)$ is not continuously embedded in $W^{\widetilde s,  \widetilde p}(\R^N)$.
\end{proposition}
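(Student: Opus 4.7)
The plan is to split the complement of \eqref{insiemeimmersioneRNsp=N} into a small number of subregions and invoke the scaling-based non-embedding statements already proved (Lemmas \ref{lemmino1} and \ref{lemmino3}), together with, for one boundary case in dimension one, the negative result from \cite{MR3990737} recalled in the discussion immediately after the statement of Lemma \ref{lemmino3}. No new scaling construction is required; the work is essentially organizational.

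More precisely, I would first observe that any pair $(\widetilde s,\widetilde p)\in [0,1]\times[1,+\infty]$ failing \eqref{insiemeimmersioneRNsp=N} must lie in at least one of the following three regions: (a) $\widetilde p<p$; (b) $s<\widetilde s\le 1$ and $p\le \widetilde p\le +\infty$; (c) $0\le \widetilde s\le s$ and $\widetilde p>N/\widetilde s$, where in the last region the bound is interpreted as $\widetilde p=+\infty$ when $\widetilde s=0$. This is an elementary unpacking of the defining inequalities of \eqref{insiemeimmersioneRNsp=N}. Region (a) is precisely the hypothesis \eqref{parentesilemma1} of Lemma \ref{lemmino1}, and the union of regions (b) and (c) is precisely the hypothesis \eqref{parentesilemma3} of Lemma \ref{lemmino3} when $N\ge 2$ (respectively \eqref{parentesilemma3BIS} when $N=1$, restricted to $\widetilde s>0$). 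In each of these cases the non-embedding of $W^{s,p}(\R^N)$ into $W^{\widetilde s,\widetilde p}(\R^N)$ is an immediate consequence of the cited lemma.

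The only point not covered directly by the two lemmas is the boundary case $(\widetilde s,\widetilde p)=(0,+\infty)$ when $N=1$, which Lemma \ref{lemmino3} explicitly excludes. Here the hypotheses $sp=N=1$ and $s\ne p$ force $s\in(0,1)$ and $p=1/s>1$, so the failure of the continuous embedding $W^{s,p}(\R)\hookrightarrow L^\infty(\R)$ is exactly the content of \cite[formulas~(1.9)--(1.10) in Theorem~B]{MR3990737}, as already noted in the paragraph following Lemma \ref{lemmino3}. Once this isolated case is dispatched, every point in the complement of \eqref{insiemeimmersioneRNsp=N} has been treated, which establishes the optimality claim.

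The main difficulty in this proof is therefore not analytic but purely combinatorial: one has to verify carefully that the three regions (a)--(c) really do exhaust the complement of \eqref{insiemeimmersioneRNsp=N} inside $[0,1]\times[1,+\infty]$, and that the single edge point $(0,+\infty)$ in dimension one is correctly peeled off and handled by the quoted reference. I do not anticipate any substantive analytic obstacle beyond this bookkeeping.
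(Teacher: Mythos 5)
Your proposal is correct and follows essentially the same route as the paper: the same decomposition of the complement of \eqref{insiemeimmersioneRNsp=N} into the region \eqref{parentesilemma1} (handled by Lemma~\ref{lemmino1}) and the region \eqref{parentesilemma3}/\eqref{parentesilemma3BIS} (handled by Lemma~\ref{lemmino3}), with the isolated case $N=1$, $\widetilde s=0$, $\widetilde p=+\infty$ resolved by appealing to $s\ne p$ and the cited non-embedding from \cite{MR3990737}. The bookkeeping you describe matches the paper's \eqref{trepossibilita5} case analysis exactly.
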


\begin{proof}
Assume that~$\widetilde s$ and~$ \widetilde p$ do not satisfy~\eqref{insiemeimmersioneRNsp=N}. Then one of the following\footnote{With a slight abuse of notation, when~$\widetilde s=0$ in
the second possibility in~\eqref{trepossibilita5} and~\eqref{trepossibilita6}, we allow~$\widetilde p=+\infty$.} holds:
\begin{equation}\label{trepossibilita5}
{\mbox{either }}\quad
\begin{cases}
0\le \widetilde{s}\le 1,\\
1\le\widetilde{p}<p,
\end{cases} \quad\mbox{ or }\quad\begin{cases}
s <\widetilde s \le 1,\\
p\le\widetilde p\le +\infty,
\end{cases}\quad\mbox{ or }\quad \begin{cases}
0\le\widetilde s \le s,\\
\dfrac{N}{\widetilde s}<\widetilde p\le +\infty.
\end{cases}
\end{equation}
The first possibility in~\eqref{trepossibilita5} coincides
with~\eqref{parentesilemma1}, and therefore
the desired result follows from Lemma~\ref{lemmino1}.

The second and third possibilities in~\eqref{trepossibilita5}
give~\eqref{parentesilemma3} if~$N\ge2$, and so
Lemma~\ref{lemmino3} in the case~$\Omega=\R^N$ with~$N\ge2$ yields the desired result.

Also, we point out that the second and third possibilities in~\eqref{trepossibilita5} when~$\widetilde s\neq 0$
give~\eqref{parentesilemma3BIS} if~$N=1$. Thus, the desired result
when~$\Omega=\R$ follows from Lemma~\ref{lemmino3}.

In the case~$N=1$, $\widetilde s=0$ and~$\widetilde p=+\infty$,
we remark that one of the assumptions in Theorem~\ref{teorema1sp=N} is that~$s\neq p$, and therefore, in this case, we have that~$p\in(1,+\infty)$. This allows us
to exploit~\cite[formulas~(1.9)--(1.10) in
Theorem~B]{MR3990737} and obtain the desired conclusion.
\end{proof}

\begin{proposition}\label{Thm6optimal}
Theorem~\ref{teorema2sp=N} is optimal, namely if~$\widetilde s$ and~$ \widetilde p$ do not satisfy~\eqref{insiemeimmersioneOmegasp=N}, then the space $W^{s, p}(\Omega)$ is not continuously embedded in $W^{\widetilde s,  \widetilde p}(\Omega)$.
\end{proposition}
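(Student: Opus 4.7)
Plan: I follow the architecture of the proof of Proposition~\ref{Thm2optimal}, adapted to the critical regime $sp=N$. After discarding the trivial point $\widetilde s=s$, $\widetilde p=p$, the complement of~\eqref{insiemeimmersioneOmegasp=N} splits into three regimes mirroring~\eqref{trepossibilita5}, namely either $s\le\widetilde s\le 1$, $1\le\widetilde p<p$, or $s<\widetilde s\le 1$, $p\le\widetilde p\le +\infty$, or $0\le\widetilde s\le s$, $N/\widetilde s<\widetilde p\le +\infty$ (with the usual abuse at $\widetilde s=0$, $\widetilde p=+\infty$). Note that, unlike the $\R^N$ version~\eqref{trepossibilita5}, the horizontal subcritical strip $0<\widetilde s<s$, $1\le\widetilde p<p$ is entirely inside~\eqref{insiemeimmersioneOmegasp=N}, which is why in the first regime above one can take $\widetilde s$ only from $[s,1]$.

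The second and third regimes fit inside~\eqref{parentesilemma3} if $N\ge 2$, and inside~\eqref{parentesilemma3BIS} if $N=1$ (recalling that the latter excludes the point $\widetilde s=0$, $\widetilde p=+\infty$). In both cases I quote Lemma~\ref{lemmino3} in its $\Omega$-form to conclude the absence of a continuous embedding. The only point left over is $N=1$, $\widetilde s=0$, $\widetilde p=+\infty$, where the question becomes whether $W^{s,p}(\Omega)\hookrightarrow L^\infty(\Omega)$ with $sp=1$; since the assumption $s\ne p$ forces $p\in(1,+\infty)$, I appeal to~\cite[formulas~(1.9)--(1.10) in Theorem~B]{MR3990737}, transplanting the counterexample to $\Omega$ by the same ``compactly supported inside $B_R\subset\Omega$'' trick that Lemma~\ref{lemmino3} uses in the bounded setting.

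The first regime is the delicate one and I further split it on whether $\widetilde s=s$ or $\widetilde s>s$. If $\widetilde s=s$ and $1\le\widetilde p<p$, the failure $W^{s,p}(\Omega)\not\hookrightarrow W^{s,\widetilde p}(\Omega)$ is precisely~\cite[Theorem~1.1]{MR3357858}. If $s<\widetilde s\le 1$ and $1\le\widetilde p<p$, I argue by contradiction: assuming $W^{s,p}(\Omega)\hookrightarrow W^{\widetilde s,\widetilde p}(\Omega)$ continuously, Lemma~\ref{orizzontalecompatto} (applied with integrability exponent $\widetilde p$ and smoothness dropping from $\widetilde s$ down to $s$, which is allowed since $s<\widetilde s$) yields the compact, hence continuous, embedding $W^{\widetilde s,\widetilde p}(\Omega)\hookrightarrow W^{s,\widetilde p}(\Omega)$. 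Composing produces a continuous embedding $W^{s,p}(\Omega)\hookrightarrow W^{s,\widetilde p}(\Omega)$ with $\widetilde p<p$, which again contradicts~\cite[Theorem~1.1]{MR3357858}.

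The main obstacle is precisely this first regime: a straight scaling counterexample of Lemma~\ref{lemmino3}-type cannot refute the embedding here because, with $\widetilde p<p$, the powers of $\varepsilon$ produced by the dilations used throughout Section~\ref{optimality} come out with the wrong sign and nothing blows up as $\varepsilon\searrow 0$. The non-embedding genuinely relies on the horizontal optimality statement of~\cite{MR3357858}, which only applies at matching smoothness $\widetilde s=s$; the compactness Lemma~\ref{orizzontalecompatto} is what bridges the gap from $\widetilde s>s$ down to $\widetilde s=s$, mirroring exactly the reduction already employed in the proof of Proposition~\ref{Thm2optimal}.
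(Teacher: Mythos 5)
Your proposal is correct and follows essentially the same route as the paper's proof: the same three-way decomposition of the complement of~\eqref{insiemeimmersioneOmegasp=N} (coinciding with~\eqref{trepossibilita6}), the same invocation of Lemma~\ref{lemmino3} (in its $N\ge 2$ form via~\eqref{parentesilemma3} and $N=1$ form via~\eqref{parentesilemma3BIS}), the same separate treatment of the corner case $N=1$, $\widetilde s=0$, $\widetilde p=+\infty$ via~\cite[Theorem~B, (1.9)--(1.10)]{MR3990737}, and in the first regime the same split into $\widetilde s=s$ (direct appeal to~\cite{MR3357858}) versus $\widetilde s>s$ (reduction through Lemma~\ref{orizzontalecompatto} followed by~\cite{MR3357858}). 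Your added commentary on why scaling cannot handle the first regime and on why the $\Omega$-case strip is narrower than the $\R^N$-case one is a faithful explanation of the structure, not a deviation from it.
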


\begin{proof}
Suppose that~$\widetilde s$ and~$ \widetilde p$ do not satisfy~\eqref{insiemeimmersioneOmegasp=N}. 
Then one of the following holds:
\begin{equation}\label{trepossibilita6}
{\mbox{either }}\quad
\begin{cases}
s\le\widetilde{s}\le 1,\\
1\le\widetilde{p}<p,
\end{cases} \quad\mbox{ or }\quad\begin{cases}
s <\widetilde s \le 1,\\
p\le\widetilde p\le +\infty,
\end{cases}\quad\mbox{ or }\quad \begin{cases}
0\le\widetilde s \le s,\\
\dfrac{N}{\widetilde s}<\widetilde p\le +\infty,
\end{cases}
\end{equation}
If the first case in~\eqref{trepossibilita6} occurs, we
distinguish two cases. If~$\widetilde s = s$, then the desired result can be deduced by~\cite[Theorem~1.1]{MR3357858}.

If instead~$\widetilde s\in(s,1]$, we assume by contradiction that~$
W^{s, p}(\Omega)$ is continuously embedded in~$W^{\widetilde s, \widetilde p}(\Omega)$.
Also, by Lemma~\ref{orizzontalecompatto}, we have that~$W^{\widetilde s, \widetilde p}(\Omega)$ is compactly embedded in~$W^{s, \widetilde p}(\Omega)$.
As a result, we obtain that~$W^{s, p}(\Omega)$ is continuously embedded in $W^{s, \widetilde p}(\Omega)$,
which is in contradiction with~\cite[Theorem~1.1]{MR3357858}.

The second and third possibilities in~\eqref{trepossibilita6}
give either~\eqref{parentesilemma3} if~$N\ge2$
or~\eqref{parentesilemma3BIS} if~$N=1$ and~$\widetilde s\neq 0$, and therefore
Lemma~\ref{lemmino3} in these cases yields the desired result.

It remains to check the case~$N=1$ and~$\widetilde s=0$.
In this situation we have that~$\widetilde p=+\infty$. Also,
we observe that~$s\neq p$ by the assumptions
in the statement of Theorem~\ref{teorema2sp=N}.
Thus, in this case, we have that~$p\in(1,+\infty)$ and we can
exploit~\cite[formulas~(1.9)--(1.10) in
Theorem~B]{MR3990737} to finish the proof.
\end{proof}

\begin{proposition}\label{Thm7optimal}
Theorem~\ref{teorema3sp=N} is optimal, namely if~$\widetilde s$ and~$ \widetilde p$ do not satisfy~\eqref{2insiemeimmersioneOmegasp=N}, then the space $W^{s, p}(\Omega)$ is not compactly embedded in $W^{\widetilde s,  \widetilde p}(\Omega)$.
\end{proposition}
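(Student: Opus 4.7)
The plan follows the same scheme as Proposition~\ref{Thm3optimal}. First, by Proposition~\ref{Thm6optimal}, if $(\widetilde s,\widetilde p)$ fails the continuous embedding condition~\eqref{insiemeimmersioneOmegasp=N}, then $W^{s,p}(\Omega)$ is not even continuously embedded in $W^{\widetilde s,\widetilde p}(\Omega)$, let alone compactly. Comparing~\eqref{insiemeimmersioneOmegasp=N} with~\eqref{2insiemeimmersioneOmegasp=N}, the remaining case is the critical curve
\[
0<\widetilde s<s,\qquad \widetilde s\,\widetilde p=N,
\]
which lies in the former but not in the latter. On this curve I need to exhibit a sequence bounded in $W^{s,p}(\Omega)$ with no strongly convergent subsequence in $W^{\widetilde s,\widetilde p}(\Omega)$.

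The construction is a concentration sequence exploiting the fact that both scales become critical simultaneously, $sp=N=\widetilde s\,\widetilde p$. Up to translation, assume $0\in\Omega$ and choose $R>0$ with $B_R\subset\Omega$. Fix $u\in C^\infty_0(B_R)\setminus\{0\}$ and set
\[
v_\varepsilon(x):=u\!\left(\frac{x}{\varepsilon}\right),\qquad \varepsilon\in(0,1).
\]
Lemma~\ref{lemmino0Omega} applied with $\gamma=0$, combined with $sp=N$, gives
\[
\|v_\varepsilon\|_{L^p(\Omega)}^{p}=\varepsilon^{N}\|u\|_{L^p(\Omega)}^{p}
\qquad\text{and}\qquad
[v_\varepsilon]_{W^{s,p}(\Omega)}^{p}\le\varepsilon^{N-sp}[u]_{W^{s,p}(\R^N)}^{p}=[u]_{W^{s,p}(\R^N)}^{p},
\]
so $v_\varepsilon$ is bounded in $W^{s,p}(\Omega)$; moreover, since $\mathrm{supp}\,v_\varepsilon\subset B_{\varepsilon R}$, we have $v_\varepsilon\to 0$ almost everywhere in $\Omega$ as $\varepsilon\searrow 0$.

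Now suppose, towards a contradiction, that the embedding $W^{s,p}(\Omega)\hookrightarrow W^{\widetilde s,\widetilde p}(\Omega)$ were compact. Extracting a subsequence, $v_\varepsilon$ would converge strongly in $W^{\widetilde s,\widetilde p}(\Omega)$ to some limit, which by the pointwise convergence must be $0$. However, the lower bound in~\eqref{agg00} of Lemma~\ref{lemmino0Omega}, together with $\widetilde s\,\widetilde p=N$, yields
\[
[v_\varepsilon]_{W^{\widetilde s,\widetilde p}(\Omega)}^{\widetilde p}\ge \varepsilon^{N-\widetilde s\,\widetilde p}[u]_{W^{\widetilde s,\widetilde p}(B_R)}^{\widetilde p}=[u]_{W^{\widetilde s,\widetilde p}(B_R)}^{\widetilde p}>0,
\]
contradicting strong convergence to $0$. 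This rules out compactness on the critical curve and finishes the optimality statement. The obstacle, as in Proposition~\ref{Thm3optimal}, is conceptual rather than technical: precisely when both $sp=N$ and $\widetilde s\,\widetilde p=N$ the Gagliardo seminorms of $v_\varepsilon$ are scale-invariant, so concentration produces a bounded sequence without a strong limit. The positivity $[u]_{W^{\widetilde s,\widetilde p}(B_R)}>0$ is automatic for nonconstant $u\in C^\infty_0(B_R)$, and the rest is bookkeeping with Lemma~\ref{lemmino0Omega}.
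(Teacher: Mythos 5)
Your proof is correct and follows essentially the same path as the paper: reduce to the critical curve $\widetilde s\,\widetilde p=N$ via Proposition~\ref{Thm6optimal}, then run a concentration sequence $v_\varepsilon(x)=u(x/\varepsilon)$ and use Lemma~\ref{lemmino0Omega} to show boundedness in $W^{s,p}(\Omega)$, a.e.\ convergence to zero, and the scale-invariant lower bound $[v_\varepsilon]_{W^{\widetilde s,\widetilde p}(\Omega)}\ge[u]_{W^{\widetilde s,\widetilde p}(B_R)}>0$. The only cosmetic difference is that the paper allows $\widetilde s\in(0,s]$ on the curve (so the same construction also rules out compactness of the identity embedding at $\widetilde s=s$, $\widetilde p=p$), but this does not change the argument.
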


\begin{proof}
If~$\widetilde s$ and~$ \widetilde p$ do not satisfy~\eqref{insiemeimmersioneOmegasp=N}, then~$W^{s, p}(\Omega)$ is not compactly embedded in~$W^{\widetilde s, \widetilde p}(\Omega)$, thanks to Proposition~\ref{Thm6optimal}.
Hence, it only remains to show that the points which belong to the critical curve~$\widetilde s \widetilde p =N$ are such that the compact embedding in Theorem~\ref{Thm7optimal} does not hold.

To this purpose, let~$s\in (0,1]$ and~$p\in[1,+\infty)$ be such that~$sp=N$ and~$s\neq p$. Let~$\widetilde s$ and~$\widetilde p$ satisfy
\begin{equation}\label{2stildeptilde}
\widetilde s \in(0, s]\qquad {\mbox{and}}\qquad \widetilde s \widetilde p =N.
\end{equation}
Up to a translation, we suppose that~$0\in\Omega$ and we let~$R>0$
such that~$B_R\subset\Omega$. Let~$u\in C^\infty_0(B_R)\setminus\{0\}$
and, for any~$\varepsilon\in(0,1)$, let
\[
v_{\varepsilon}(x):= u\left(\frac{x}{\varepsilon}\right).
\]

For all~$x\in B_R\setminus\{0\}$ there exists~$\varepsilon_0\in(0,1)$ such that, for all~$\varepsilon\in(0,\varepsilon_0]$, we have that~$|x|/\varepsilon>R$ and thus, by construction, we have that~$v_{\varepsilon}(x)=0$.
As a result,
\begin{equation}\label{2vepstozero}
{\mbox{$v_\varepsilon\to 0$ a.e. in~$\Omega$ as~$\varepsilon\searrow0$.}}\end{equation}

In addition, we exploit formulas~\eqref{agg00} and~\eqref{zse4rdfrtgyujhio1} (used here with~$\gamma:=0$) to see that
\[
\|v_{\varepsilon}\|^p_{L^p(\Omega)} = \varepsilon^{sp} \|u\|^p_{L^p(\Omega)}
\qquad {\mbox{and}} \qquad 
[v_{\varepsilon}]^p_{W^{s, p}(\Omega)} \le  [u]^p_{W^{s, p}(\R^N)}.
\]
This gives that~$v_\varepsilon$ is a bounded sequence in~$W^{s, p}(\Omega)$.

Accordingly, if the embedding of~$W^{s, p}(\Omega)$ into~$W^{\widetilde s, \widetilde p}(\Omega)$ were compact, up to
a subsequence, we would have that~$v_\varepsilon$ converges
to some~$v$ in~$W^{\widetilde s, \widetilde p}(\Omega)$ as~$\varepsilon\searrow0$. This and~\eqref{2vepstozero} imply that
\begin{equation}\label{mnbvcxzasdfghjkoiuytre23456789mnbvcx}
{\mbox{$v_\varepsilon$ converges to~$0$ in~$W^{\widetilde s, \widetilde p}(\Omega)$ as~$\varepsilon\searrow0$.}}
\end{equation}

Furthermore, formula~\eqref{agg00} and
the second assumption in~\eqref{2stildeptilde} give that
\begin{equation*}
\|v_{\varepsilon}\|^{\widetilde p}_{L^{\widetilde p}(\Omega)}  = \varepsilon^{\widetilde p \widetilde s} \|u\|^{\widetilde p}_{L^{\widetilde p}(\Omega)}\qquad\mbox{ and }\qquad 
[v_{\varepsilon}]^{\widetilde p}_{W^{\widetilde s, \widetilde p}(\Omega)}
\ge [u]^{\widetilde p}_{W^{\widetilde s, \widetilde p}(B_R)}.
\end{equation*}
We thereby obtain that
\[
\lim_{\varepsilon\to 0}\left(\|v_{\varepsilon}\|^{\widetilde p}_{L^{\widetilde p}(\Omega)} + [v_{\varepsilon}]^{\widetilde p}_{W^{\widetilde s, \widetilde p}(\Omega)} \right)\ge
[u]^{\widetilde p}_{W^{\widetilde s, \widetilde p}(B_R)}>0,
\]
which is in contradiction with~\eqref{mnbvcxzasdfghjkoiuytre23456789mnbvcx},
and therefore the desired claim is established.
\end{proof}

\subsection{Optimality of Theorems~\ref{teorema1sp>N}, \ref{teorema2sp>N} and~\ref{teorema3sp>N}}

Finally, we discuss the optimality of the results stated in the case~$sp>N$.

\begin{proposition}\label{Thm8optimal}
Theorem~\ref{teorema1sp>N} is optimal, namely if~$\widetilde s$ and~$ \widetilde p$ do not satisfy~\eqref{insiemeimmersioneRNsp>N}, then the space $W^{s, p}(\R^N)$ is not continuously embedded in $W^{\widetilde s,  \widetilde p}(\R^N)$.
\end{proposition}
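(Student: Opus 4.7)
The plan is to reduce the optimality statement to the scaling-based non-embedding lemmas already established, namely Lemma~\ref{lemmino1} and Lemma~\ref{lemmino4}, by a careful case analysis of the region in the $(\widetilde s,\widetilde p)$-plane complementary to~\eqref{insiemeimmersioneRNsp>N}. Since the proofs for $\Omega=\R^N$ in the other three continuous-embedding optimality propositions (Propositions~\ref{Thm1optimal}, \ref{Thm5optimal}) follow exactly this template, no new non-embedding argument is expected to be needed here, and in particular the critical curve $\widetilde p = \frac{Np}{N-(s-\widetilde s)p}$ is already included in~\eqref{insiemeimmersioneRNsp>N} and requires no separate treatment (unlike the compact cases).

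First, I would enumerate the ways in which $(\widetilde s,\widetilde p)\in[0,1]\times[1,+\infty]$ can fail~\eqref{insiemeimmersioneRNsp>N}. A direct inspection shows that the complement splits into the three alternatives
\begin{equation*}
\text{either}\quad\begin{cases} 0\le \widetilde s\le 1,\\ 1\le \widetilde p<p,\end{cases}\quad\text{or}\quad\begin{cases} s<\widetilde s\le 1,\\ p\le \widetilde p\le +\infty,\end{cases}\quad\text{or}\quad\begin{cases} \dfrac{sp-N}{p}<\widetilde s\le s,\\ \dfrac{Np}{N-(s-\widetilde s)p}<\widetilde p\le +\infty.\end{cases}
\end{equation*}
The first alternative coincides precisely with the hypothesis~\eqref{parentesilemma1} of Lemma~\ref{lemmino1}, while the second and third alternatives together reconstruct exactly the two alternatives in~\eqref{parentesilemma4} of Lemma~\ref{lemmino4}.

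Therefore, the proof would conclude by invoking Lemma~\ref{lemmino1} when the first alternative holds, and Lemma~\ref{lemmino4} with the choice $\Omega=\R^N$ when either the second or third alternative holds; in each case the quoted lemma directly yields the failure of the continuous embedding $W^{s,p}(\R^N)\hookrightarrow W^{\widetilde s,\widetilde p}(\R^N)$, which is the claim of Proposition~\ref{Thm8optimal}.

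The only delicate point, and the step I expect to be the main bookkeeping obstacle, is verifying the case split above is exhaustive: one has to check that in the regime $\widetilde p\ge p$ and $\widetilde s\le s$, if $(\widetilde s,\widetilde p)$ lies outside~\eqref{insiemeimmersioneRNsp>N}, then necessarily $\widetilde s>\frac{sp-N}{p}$, so that the third alternative applies (and not some region disjoint from Lemma~\ref{lemmino4}); this holds because for $\widetilde s\le \frac{sp-N}{p}$ the condition~\eqref{insiemeimmersioneRNsp>N} places no finite upper bound on $\widetilde p$, hence any $\widetilde p\ge p$ is already admissible. Once this is confirmed, the proof is a one-line application of the two preceding lemmas.
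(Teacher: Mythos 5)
Your argument is correct and follows precisely the paper's own route: the complement of~\eqref{insiemeimmersioneRNsp>N} is split into the same three alternatives, with the first matching~\eqref{parentesilemma1} (Lemma~\ref{lemmino1}) and the remaining two matching~\eqref{parentesilemma4} (Lemma~\ref{lemmino4} for~$\Omega=\R^N$). The extra sanity check you include on the exhaustiveness of the decomposition when~$\widetilde p\ge p$ and~$\widetilde s\le s$ is not spelled out in the paper but is consistent with it.
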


\begin{proof}
Observe that, if~$\widetilde s$ and~$\widetilde p$ do not satisfy~\eqref{insiemeimmersioneRNsp>N}, then one of the following holds:
\begin{equation}\label{trepossibilita8}
{\mbox{either}}\quad
\begin{cases}
0\le\widetilde{s}\le 1,\\
1\le\widetilde{p}<p,
\end{cases} \quad\mbox{or}\quad\begin{cases}
s <\widetilde s \le 1,\\
p\le\widetilde p\le +\infty,
\end{cases}\quad\mbox{or}\quad \begin{cases}
\dfrac{sp-N}{p}<\widetilde s \le s,\\
\dfrac{Np}{N-(s-\widetilde s)p}<\widetilde p\le +\infty,
\end{cases}
\end{equation}
namely either~\eqref{parentesilemma1} or~\eqref{parentesilemma4} holds true.

Therefore, the desired result follows from Lemma~\ref{lemmino1} if~\eqref{parentesilemma1} holds true and from
Lemma~\ref{lemmino4} in the case~$\Omega=\R^N$ 
if~\eqref{parentesilemma4} is satisfied.
\end{proof}

\begin{proposition}\label{Thm9optimal}
Theorem~\ref{teorema2sp>N} is optimal, namely if~$\widetilde s$ and~$ \widetilde p$ do not satisfy~\eqref{insiemeimmersioneOmegasp>N}, then the space $W^{s, p}(\Omega)$ is not continuously embedded in $W^{\widetilde s,  \widetilde p}(\Omega)$.
\end{proposition}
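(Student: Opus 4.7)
The plan I would follow mirrors the strategy used for Propositions~\ref{Thm2optimal} and~\ref{Thm6optimal}. First I would observe that, if $\widetilde s$ and $\widetilde p$ fail to satisfy~\eqref{insiemeimmersioneOmegasp>N}, then (up to the trivial cases singled out in footnote~\ref{footnotetrivial}) the pair falls into one of the three alternatives
\begin{equation*}
\mbox{either}\quad
\begin{cases}
s\le\widetilde{s}\le 1,\\
1\le\widetilde{p}<p,
\end{cases}
\quad\mbox{or}\quad
\begin{cases}
s <\widetilde s \le 1,\\
p\le\widetilde p\le +\infty,
\end{cases}
\quad\mbox{or}\quad
\begin{cases}
\dfrac{sp-N}{p}<\widetilde s \le s,\\
\dfrac{Np}{N-(s-\widetilde s)p}<\widetilde p\le +\infty.
\end{cases}
\end{equation*}
The last two of these coincide with the hypotheses~\eqref{parentesilemma4} of Lemma~\ref{lemmino4}, so in those two cases I would directly invoke Lemma~\ref{lemmino4} in the bounded-domain case to rule out the continuous embedding.

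The first alternative is the more delicate one, and I would treat it as in the proofs of Propositions~\ref{Thm2optimal} and~\ref{Thm6optimal}, by splitting further according to whether $\widetilde s = s$ or $\widetilde s \in (s, 1]$. When $\widetilde s = s$, the statement is about improving integrability without changing the smoothness scale, and the non-embedding follows directly from~\cite[Theorem~1.1]{MR3357858}. When $\widetilde s \in (s, 1]$, I would argue by contradiction: assuming that $W^{s,p}(\Omega)$ is continuously embedded in $W^{\widetilde s,\widetilde p}(\Omega)$ and composing with the compact embedding of $W^{\widetilde s,\widetilde p}(\Omega)$ into $W^{s,\widetilde p}(\Omega)$ supplied by Lemma~\ref{orizzontalecompatto} (applied with smoothness exponents $s<\widetilde s$ and integrability $\widetilde p$), one would obtain a continuous embedding of $W^{s,p}(\Omega)$ into $W^{s,\widetilde p}(\Omega)$ with $\widetilde p < p$, in contradiction with~\cite[Theorem~1.1]{MR3357858}.

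The main obstacle I foresee is essentially bookkeeping rather than analytical: one must verify that the three alternatives displayed above truly exhaust the complement of~\eqref{insiemeimmersioneOmegasp>N} (modulo the trivialities) and, since the endpoint $p=+\infty$ is now allowed whenever $sp>N$, one needs to confirm that Lemma~\ref{lemmino4} and the cited results in~\cite{MR3357858} accommodate this endpoint without modification. The only step which requires a genuine argument, beyond quoting the available lemmas, is the compactness-composition trick in the subcase $\widetilde s\in(s,1]$, which however is word-for-word the one already employed in Propositions~\ref{Thm2optimal} and~\ref{Thm6optimal}.
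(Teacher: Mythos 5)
Your proposal is correct and follows essentially the same approach as the paper: the same three-way case split, the same appeal to Lemma~\ref{lemmino4} for the second and third alternatives, the same use of \cite[Theorem~1.1]{MR3357858} when $\widetilde s = s$, and the same compactness-composition argument via Lemma~\ref{orizzontalecompatto} when $\widetilde s\in(s,1]$. Your caution about the endpoint $p=+\infty$ is reasonable, but note that Lemma~\ref{orizzontalecompatto} is applied with integrability $\widetilde p < p \le +\infty$, so $\widetilde p$ is finite and the lemma's hypothesis $p\in[1,+\infty)$ is met; the remaining endpoint issues are handled (at least implicitly) by the statements of Lemma~\ref{lemmino4} and the cited result, exactly as the paper does.
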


\begin{proof}
Notice that, if~$\widetilde s$ and~$ \widetilde p$ do not satisfy~\eqref{insiemeimmersioneOmegasp>N}, then one of the following holds:
\begin{equation}\label{trepossibilita9}
{\mbox{either}}\quad
\begin{cases}
s\le\widetilde{s}\le 1,\\
1\le\widetilde{p}<p,
\end{cases} \quad\mbox{or}\quad\begin{cases}
s <\widetilde s \le 1,\\
p\le\widetilde p\le +\infty,
\end{cases}\quad\mbox{or}\quad \begin{cases}
\dfrac{sp-N}{p}<\widetilde s \le s,\\
\dfrac{Np}{N-(s-\widetilde s)p}<\widetilde p\le +\infty.
\end{cases}
\end{equation}

The second and third cases in~\eqref{trepossibilita9} coincide with~\eqref{parentesilemma4}. Hence, if this is the case, the desired result follows from Lemma~\ref{lemmino4}.

If not, the first case in~\eqref{trepossibilita9} occurs.
We now distinguish two cases. If~$\widetilde s = s$, then the desired result can be deduced by~\cite[Theorem~1.1]{MR3357858}.

If instead~$\widetilde s\in(s,1]$, we assume by contradiction that~$W^{s, p}(\Omega)$ is continuously embedded in~$W^{\widetilde s, \widetilde p}(\Omega)$.
Also, by Lemma~\ref{orizzontalecompatto}, we have that~$W^{\widetilde s, \widetilde p}(\Omega)$ is continuously embedded in~$W^{s, \widetilde p}(\Omega)$.
Thus, combining these two facts, we obtain that~$W^{s, p}(\Omega)$ is continuously embedded in~$W^{s, \widetilde p}(\Omega)$,
which is in contradiction with~\cite[Theorem~1.1]{MR3357858}.
\end{proof}

\begin{proposition}\label{Thm10optimal}
Theorem~\ref{teorema3sp>N} is optimal, namely if~$\widetilde s$ and~$ \widetilde p$ do not satisfy~\eqref{2insiemeimmersioneOmegasp>N}, then the space $W^{s, p}(\Omega)$ is not compactly embedded in $W^{\widetilde s,  \widetilde p}(\Omega)$.
\end{proposition}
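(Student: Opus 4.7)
The plan is to follow the concentration-at-a-point scheme already used to prove optimality in Propositions~\ref{Thm3optimal} and~\ref{Thm7optimal}. First, by Proposition~\ref{Thm9optimal}, if $\widetilde s$ and $\widetilde p$ do not satisfy~\eqref{insiemeimmersioneOmegasp>N} then $W^{s,p}(\Omega)$ is not even continuously embedded in $W^{\widetilde s,\widetilde p}(\Omega)$, so \emph{a fortiori} the compact embedding fails. Comparing~\eqref{insiemeimmersioneOmegasp>N} with~\eqref{2insiemeimmersioneOmegasp>N}, the only case that remains to be ruled out is the critical Sobolev curve
\[
\frac{sp-N}{p}<\widetilde s<s,\qquad \widetilde p=\frac{Np}{N-(s-\widetilde s)p}.
\]

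For this remaining case, up to a translation I would assume $0\in\Omega$, pick $R>0$ with $B_R\subset\Omega$, fix $u\in C^\infty_0(B_R)\setminus\{0\}$, and rescale
\[
v_\varepsilon(x):=\varepsilon^{-\frac{N-sp}{p}}\,u\!\left(\frac{x}{\varepsilon}\right),\qquad \varepsilon\in(0,1).
\]
Three things then need to be verified, and Lemma~\ref{lemmino0Omega} applied with $\gamma=-(N-sp)/p$ does all the bookkeeping. First, since $u$ is supported in $B_R$, for any fixed $x\in\Omega\setminus\{0\}$ we have $v_\varepsilon(x)=0$ for all sufficiently small $\varepsilon$, hence $v_\varepsilon\to 0$ almost everywhere in $\Omega$. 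Second, \eqref{zse4rdfrtgyujhio1} gives $\|v_\varepsilon\|_{L^p(\Omega)}^p=\varepsilon^{sp}\|u\|_{L^p(\Omega)}^p$, while the upper bound in~\eqref{agg00} gives $[v_\varepsilon]_{W^{s,p}(\Omega)}^p\le [u]_{W^{s,p}(\R^N)}^p$, so $(v_\varepsilon)$ is bounded in $W^{s,p}(\Omega)$. Third, the lower bound in~\eqref{agg00} applied with $(\widetilde s,\widetilde p)$ in place of $(s,p)$ yields
\[
[v_\varepsilon]_{W^{\widetilde s,\widetilde p}(\Omega)}^{\widetilde p}\ge \varepsilon^{\alpha}\,[u]_{W^{\widetilde s,\widetilde p}(B_R)}^{\widetilde p},\qquad \alpha:=-\frac{(N-sp)\widetilde p}{p}+N-\widetilde s\widetilde p,
\]
and a short algebraic manipulation using the defining identity $\widetilde p\,(N-(s-\widetilde s)p)=Np$ of the critical curve shows $\alpha=0$. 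Hence this Gagliardo seminorm is bounded below by a positive constant uniformly in $\varepsilon$.

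If $W^{s,p}(\Omega)$ were compactly embedded in $W^{\widetilde s,\widetilde p}(\Omega)$, then a subsequence of $(v_\varepsilon)$ would converge strongly in $W^{\widetilde s,\widetilde p}(\Omega)$ and, up to a further subsequence, pointwise a.e.\ in $\Omega$; the pointwise limit $0$ would then force the strong limit to vanish, contradicting the uniform positive lower bound on $[v_\varepsilon]_{W^{\widetilde s,\widetilde p}(\Omega)}^{\widetilde p}$. The only delicate point is the exponent arithmetic that makes the Gagliardo seminorm scaling invariant precisely on the critical curve; once that vanishing of $\alpha$ is checked, the argument is a direct transcription of the ones already carried out for Propositions~\ref{Thm3optimal} and~\ref{Thm7optimal}.
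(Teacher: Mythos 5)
Your proposal is correct and follows the paper's proof almost verbatim: reduce to the critical Sobolev curve via Proposition~\ref{Thm9optimal}, rescale with~$v_\varepsilon(x)=\varepsilon^{s-N/p}u(x/\varepsilon)$ (the same weight, since~$s-N/p=-(N-sp)/p$), observe a.e.~convergence to zero and boundedness in~$W^{s,p}(\Omega)$ from Lemma~\ref{lemmino0Omega}, and derive a contradiction from the scale-invariance of the~$W^{\widetilde s,\widetilde p}$ seminorm on the critical curve. The only presentational difference is that you spell out the vanishing of the exponent~$\alpha$ explicitly, whereas the paper encodes the critical-curve relation as~$\widetilde s-N/\widetilde p=s-N/p$ and reads off the same conclusion directly from~\eqref{agg00}; the two are equivalent.
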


\begin{proof}
In light of Proposition~\ref{Thm9optimal}, if~$\widetilde s$ and~$\widetilde p$ do not satisfy~\eqref{insiemeimmersioneOmegasp>N}, then~$W^{s, p}(\Omega)$ is not compactly embedded in~$W^{\widetilde s, \widetilde p}(\Omega)$.
Thus, it only remains to show that the points which belong to the curve
\[
\widetilde p=\frac{Np}{N-p(s-\widetilde s)}
\]
are such that the compact embedding stated in Theorem~\ref{teorema3sp>N} does not hold.

To this purpose, let~$s\in (0,1]$ and~$p\geq 1$ be such that~$sp>N$. Let~$\widetilde s$ and~$\widetilde p$ satisfy
\begin{equation}\label{3stildeptilde}
\widetilde s\in[0, s)\qquad{\mbox{and}}\qquad 
\widetilde s -\frac{N}{\widetilde p}=s-\frac{N}{p}.
\end{equation}

Without loss of generality, we suppose that~$0\in\Omega$ and we let~$R>0$ such that~$B_R\subset\Omega$.
Let also~$u\in C^\infty_0(B_R)\setminus\{0\}$ and,
for any~$\varepsilon\in(0,1)$,
\[
v_{\varepsilon}(x):= \varepsilon^{s-\frac{N}{p}} u\left(\frac{x}{\varepsilon}\right).
\]

We observe that for all~$x\in B_R\setminus\{0\}$ there exists~$\varepsilon_0\in(0,1)$ such that, for all~$\varepsilon\in(0,\varepsilon_0]$,
we have that~$|x|/\varepsilon>R$, and as a result~$v_\varepsilon(x)=$. This gives that 
\begin{equation}\label{2vepstozero333}
{\mbox{$v_\varepsilon\to 0$ a.e. in~$\Omega$ as~$\varepsilon\searrow0$.}}\end{equation}

Now, we use formulas~\eqref{agg00} and~\eqref{zse4rdfrtgyujhio1} (with~$\gamma:=s-N/p$) to see that
\[
\|v_{\varepsilon}\|^p_{L^p(\Omega)} = \varepsilon^{sp} \|u\|^p_{L^p(\Omega)}
\qquad {\mbox{and}} \qquad 
[v_{\varepsilon}]^p_{W^{s, p}(\Omega)} \le  [u]^p_{W^{s, p}(\R^N)}.
\]
As a consequence, $v_\varepsilon$ is a bounded sequence in~$W^{s, p}(\Omega)$.

Thus, if the embedding of~$W^{s,p}(\Omega)$
into~$W^{\widetilde s,\widetilde p}(\Omega)$ were compact,
we would have that~$v_\varepsilon$ converges, up to a subsequence,
to some function~$v$ in~$W^{\widetilde s,\widetilde p}(\Omega)$
as~$\varepsilon\searrow0$. {F}rom this and~\eqref{2vepstozero333} we would then have that
\begin{equation}\label{mnbvc98765432qwertyu3456789mnbvcx}
{\mbox{$v_\varepsilon$ converges to~$0$ in~$W^{\widetilde s, \widetilde p}(\Omega)$ as~$\varepsilon\searrow0$.}}
\end{equation}

Also, formula~\eqref{agg00} and
the second assumption in~\eqref{3stildeptilde} give that
\begin{equation*}
\|v_{\varepsilon}\|^{\widetilde p}_{L^{\widetilde p}(\Omega)}  = \varepsilon^{\widetilde p \widetilde s} \|u\|^{\widetilde p}_{L^{\widetilde p}(\Omega)}\qquad\mbox{ and }\qquad 
[v_{\varepsilon}]^{\widetilde p}_{W^{\widetilde s, \widetilde p}(\Omega)}
\ge [u]^{\widetilde p}_{W^{\widetilde s, \widetilde p}(B_R)}.
\end{equation*}
Consequently,
\[
\lim_{\varepsilon\to 0}\left(\|v_{\varepsilon}\|^{\widetilde p}_{L^{\widetilde p}(\Omega)} + [v_{\varepsilon}]^{\widetilde p}_{W^{\widetilde s, \widetilde p}(\Omega)} \right)\ge
[u]^{\widetilde p}_{W^{\widetilde s, \widetilde p}(B_R)}>0,
\]
which is in contradiction with~\eqref{mnbvc98765432qwertyu3456789mnbvcx},
thus proving the desired claim.
\end{proof}

\section{Auxiliary results}\label{sectionauxiliary}
In this section we provide some preliminary results that will be used to prove our main theorems.

\subsection{An interpolation result}
We start by recalling the following interpolation result due to Brezis and Mironescu (see~\cite[Theorem~1]{MR3813967}). We state it in a way which is suitable for our analysis:

\begin{theorem}\label{THMbrezismironuescu}
Let~$0\leq s_1\leq s_2\leq 1$ and~$p_1$, $p_2\in [1,+\infty]$
with~$s_2\ne p_2$.
Let~$\Omega$ be either~$\R^N$ or a bounded Lipschitz domain of~$\R^N$.

For every~$\theta\in [0,1]$, let 
\[
s_\theta:=\theta s_1 +(1-\theta)s_2 \qquad \mbox{and} \qquad
p_\theta:=\frac{p_1 \,p_2}{(1-\theta)p_1+\theta p_2}.
\]

Then, there exists a positive constant~$C=C(s_1,s_2,p_1,p_2,\theta, \Omega)$ such that, for any~$u\in W^{s_1,p_1}(\Omega)\cap W^{s_2,p_2}(\Omega)$,
\[
\|u\|_{W^{s_\theta,p_\theta}(\Omega)}
\leq C \|u\|_{W^{s_1,p_1}(\Omega)}^\theta
\|u\|_{W^{s_2,p_2}(\Omega)}^{1-\theta} .
\]
\end{theorem}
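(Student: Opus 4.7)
The plan is to reduce to the genuine interpolation range $\theta\in(0,1)$ (the endpoints being trivial identities) and to split the target norm $\|u\|_{W^{s_\theta,p_\theta}(\Omega)}$ into its $L^{p_\theta}$ contribution and its Gagliardo (or gradient) contribution, estimating each separately. For the $L^{p_\theta}$ piece, the definition of $p_\theta$ is equivalent to $1/p_\theta=\theta/p_1+(1-\theta)/p_2$, so the log-convexity of $L^q$ norms (i.e.\ a one-line application of H\"older) yields
\[
\|u\|_{L^{p_\theta}(\Omega)}\le \|u\|_{L^{p_1}(\Omega)}^{\theta}\|u\|_{L^{p_2}(\Omega)}^{1-\theta},
\]
which already has the target multiplicative form.

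For the Gagliardo seminorm contribution in the generic case $s_1,s_2\in(0,1)$, I would work directly on the integrand of $[u]_{W^{s_\theta,p_\theta}}^{p_\theta}$. Factor the numerator as $|u(x)-u(y)|^{\theta p_\theta}\cdot|u(x)-u(y)|^{(1-\theta)p_\theta}$ and split the weight $|x-y|^{-(N+s_\theta p_\theta)}$ as $|x-y|^{-\alpha}|x-y|^{-\beta}$ with $\alpha:=\theta p_\theta(N/p_1+s_1)$ and $\beta:=(1-\theta)p_\theta(N/p_2+s_2)$. Then H\"older's inequality with conjugate exponents $p_1/(\theta p_\theta)$ and $p_2/((1-\theta)p_\theta)$ (conjugate exactly because of the relation $1/p_\theta=\theta/p_1+(1-\theta)/p_2$) produces two double integrals which are precisely the Gagliardo integrals for $(s_1,p_1)$ and $(s_2,p_2)$; the algebraic identity $\alpha+\beta=N+s_\theta p_\theta$ needed for consistency follows immediately from the definitions of $s_\theta$ and $p_\theta$. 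Raising to the power $1/p_\theta$ gives $[u]_{W^{s_\theta,p_\theta}}\le [u]_{W^{s_1,p_1}}^{\theta}[u]_{W^{s_2,p_2}}^{1-\theta}$, which combines with the $L^{p_\theta}$ bound above to close the generic case.

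The main obstacle is that this integrand-level H\"older trick degenerates at the endpoints $s_1=0$ or $s_2=1$, where the $W^{s_i,p_i}$ norm is not given by a Gagliardo integral. For $s_1=0$ I would replace it by the following scale-splitting: fix $R>0$ and cut the double integral for $[u]_{W^{s_\theta,p_\theta}}^{p_\theta}$ at $|x-y|=R$. On $\{|x-y|<R\}$ write $|x-y|^{-(N+s_\theta p_\theta)}=|x-y|^{-(N+s_2 p_\theta)}|x-y|^{(s_2-s_\theta)p_\theta}$, which after a H\"older step in the Lebesgue variable is controlled by $R^{\theta s_2 p_\theta}\,[u]_{W^{s_2,p_2}}^{(1-\theta)p_\theta}\|u\|_{L^{p_1}}^{\theta p_\theta}$-type quantities. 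On $\{|x-y|\ge R\}$ the crude bound $|u(x)-u(y)|^{p_\theta}\le 2^{p_\theta-1}(|u(x)|^{p_\theta}+|u(y)|^{p_\theta})$ together with Fubini and the integrability of $|x-y|^{-(N+s_\theta p_\theta)}$ away from the diagonal produces a factor $R^{-s_\theta p_\theta}\|u\|_{L^{p_\theta}}^{p_\theta}$, after which $\|u\|_{L^{p_\theta}}$ is interpolated as in the first paragraph; optimizing $R$ recovers the claimed powers. The case $s_2=1$ is analogous, using $|u(x)-u(y)|\le |x-y|\,\sup_{z\in[x,y]}|\nabla u(z)|$ on short segments; this is where the assumption that $\Omega$ is either $\R^N$ or a bounded Lipschitz domain is used, to ensure that such segments lie in $\Omega$ and to invoke a standard extension operator when passing through the boundary. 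The genuinely delicate point in a Besov-free presentation is the coupled endpoint situation $s_1=0$ with $p_1\neq p_2$, since one must interpolate smoothness and integrability simultaneously rather than sequentially; I would handle it by performing the integrand-level H\"older in the Lebesgue exponents first, and only then optimizing the scale $R$ in the splitting above.
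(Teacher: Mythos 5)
The paper does not prove Theorem~\ref{THMbrezismironuescu}: it is stated as a citation of Brezis--Mironescu~\cite[Theorem~1]{MR3813967}, restated in the paper's notation and then used as a black box. So there is no ``paper's own proof'' to compare against; you are attempting to supply something the authors deliberately outsource.

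Your interior case is correct and is exactly the classical one-line argument: for $s_1,s_2\in(0,1)$ and $p_1,p_2<\infty$, the weight split $|x-y|^{-(N+s_\theta p_\theta)}=|x-y|^{-\theta p_\theta(N/p_1+s_1)}\,|x-y|^{-(1-\theta)p_\theta(N/p_2+s_2)}$ together with H\"older at exponents $p_1/(\theta p_\theta)$, $p_2/((1-\theta)p_\theta)$ gives $[u]_{W^{s_\theta,p_\theta}}\le [u]_{W^{s_1,p_1}}^\theta [u]_{W^{s_2,p_2}}^{1-\theta}$, and log-convexity of $L^q$ handles the lower-order term. No issue there.

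The gap is in the endpoint cases, which are where all the content of Brezis--Mironescu lives. When $s_1=0$ the H\"older split is rigid: the exponent on $|x-y|$ in the $p_1$-factor is forced to be $N+s_1p_1=N$, and the resulting integral $\iint_{|x-y|<R}|u(x)-u(y)|^{p_1}|x-y|^{-N}\,dx\,dy$ is (borderline) divergent; the crude bound $|u(x)-u(y)|\le|u(x)|+|u(y)|$ does not make it finite. Your scale-splitting paragraph extracts the slack $|x-y|^{(s_2-s_\theta)p_\theta}\le R^{\theta s_2 p_\theta}$ \emph{before} the H\"older, but once that is done the remaining near-diagonal integral is the one with weight $|x-y|^{-(N+s_2p_\theta)}$, and the same rigidity reappears: the $p_1$-factor again comes out with exponent exactly $N$. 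The sequential recipe you propose at the end (``H\"older in the Lebesgue exponents first, then optimize $R$'') does not escape this, because the $|x-y|^{-N}$ is produced by the H\"older step itself, and no subsequent choice of $R$ can undo it. (Scale-splitting does work when $p_1=p_2$, which is why it feels like it should generalize; but the coupled case is genuinely harder.) The $s_2=1$ endpoint has the analogous issue, and your replacement $|u(x)-u(y)|\le|x-y|\sup|\nabla u|$ is not available for $\nabla u\in L^{p_2}$ only --- one needs the integral representation along the segment followed by Minkowski/H\"older, and separately an extension argument near $\partial\Omega$.

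A further symptom that something is missing: the theorem carries the hypothesis $s_2\ne p_2$, which in the range $s_2\le1\le p_2$ means precisely $(s_2,p_2)\ne(1,1)$. This exclusion is there because the Gagliardo--Nirenberg interpolation \emph{genuinely fails} at $s_2=p_2=1$ (with $s_1<1$); this is one of the main points of~\cite{MR3813967}. A proof sketch that appears to proceed uniformly, with no step visibly breaking at $(s_2,p_2)=(1,1)$, cannot be complete. You should be able to point to the exact place where your argument uses $s_2\ne p_2$; at the moment there is no such place, which confirms the endpoint treatment is not yet a proof.
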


\subsection{A suitable curve of points}

Now, we show that any points~$\widetilde s$ and~$\widetilde p$ satisfying~\eqref{insiemeimmersioneRN} belong to a suitable curve.
For this, if~$sp<N$, we define the fractional Sobolev exponent
$$ p^*_s:=\frac{Np}{N-sp}.$$

\begin{lemma}\label{curvaq11}
Let~$s\in (0,1]$ and~$p\in [1, +\infty)$ be such that~$sp<N$. Assume that~$\widetilde{s}$ and~$\widetilde{p}$ satisfy~\eqref{insiemeimmersioneRN}. 

Then, there exists~$q\in [p,p^*_s]$ such that the curve~$\gamma_q:[0,1]\to \R^2$ defined as
\begin{equation}\label{curvalemma}
\gamma_q(\theta):=\left((1-\theta)s,\frac{pq}{q-\theta(q-p)} \right)
\end{equation}
contains the point~$(\widetilde{s},\widetilde{p})$, namely there exists~$\widetilde{\theta}\in [0,1]$ such that~$\gamma_q({\widetilde{\theta}})=(\widetilde{s},\widetilde{p})$.
\end{lemma}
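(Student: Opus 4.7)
The plan is to exploit the structural feature that the first coordinate of $\gamma_q(\theta)$, namely $(1-\theta)s$, does not depend on $q$. So I would first match the first coordinate: solving $(1-\widetilde\theta)s=\widetilde s$ gives the unique candidate $\widetilde\theta=(s-\widetilde s)/s$, which lies in $[0,1]$ thanks to the assumption $0\le\widetilde s\le s$ in \eqref{insiemeimmersioneRN}. With $\widetilde\theta$ now fixed, the problem reduces to finding a scalar $q\in[p,p^*_s]$ such that
\[
\frac{pq}{q-\widetilde\theta(q-p)}=\widetilde p.
\]

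I would treat the degenerate case $\widetilde\theta=0$ (equivalently $\widetilde s=s$) separately: the bound \eqref{insiemeimmersioneRN} forces $\widetilde p=p$ and the equation is automatically satisfied for every $q$, so I would simply pick $q=p$. When $\widetilde\theta\in(0,1]$, clearing denominators in the displayed equation yields the explicit formula
\[
q=\frac{p\,\widetilde p\,\widetilde\theta}{p-\widetilde p(1-\widetilde\theta)}.
\]
The three remaining tasks are then to verify that (i) the denominator is strictly positive, (ii) $q\ge p$, and (iii) $q\le p^*_s$.

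Task (i) amounts to showing $\widetilde p<\frac{ps}{\widetilde s}$ (for $\widetilde s>0$), which I would deduce from the hypothesis $\widetilde p\le\frac{Np}{N-(s-\widetilde s)p}$ together with the strict inequality $sp<N$: a direct cross-multiplication shows $\frac{Np}{N-(s-\widetilde s)p}\le\frac{ps}{\widetilde s}\iff sp\le N$, strict here. Task (ii) rearranges immediately to $\widetilde p\ge p$, which is assumed. Task (iii) is the most substantive computation: using the identity $\widetilde\theta\, sp=(s-\widetilde s)p$, the inequality $q\le p^*_s=\frac{Np}{N-sp}$ simplifies, after cross-multiplication, to $\widetilde p\,(N-\widetilde\theta sp)\le Np$, that is, exactly to the upper bound in \eqref{insiemeimmersioneRN}.

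I do not expect a real obstacle beyond careful bookkeeping of the boundary cases $\widetilde s\in\{0,s\}$, $\widetilde p=p$ and $\widetilde p=p^*_s$, and keeping track of the direction of the inequalities under cross-multiplication by quantities whose sign is controlled by $sp<N$. The whole argument is purely algebraic once the parametrization of $\gamma_q$ is inverted in the first coordinate.
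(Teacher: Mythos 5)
Your proposal is correct and takes essentially the same approach as the paper: inverting the parametrization in $\theta$ via the first coordinate, solving for $q$ (obtaining $q=\frac{p\widetilde p(s-\widetilde s)}{sp-\widetilde s\widetilde p}$, exactly the paper's choice), and then verifying the three bounds by cross-multiplication using the hypotheses and $sp<N$. The only cosmetic difference is that you need not treat $\widetilde p=p$ as a separate case, since your formula then directly returns $q=p$, whereas the paper singles out that case before verifying the strict inequality $q>p$.
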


\begin{proof}
Notice that if~$\widetilde s=s$, from~\eqref{insiemeimmersioneRN}
we see that~$\widetilde p=p$. In this case we then have
that~$\gamma_q(0)=(\widetilde{s},\widetilde{p})$
for every~$q\in [p,p^*_s]$. Thus, the couple~$(\widetilde s,\widetilde p)$
belongs to the curve~$\gamma_q$. 

Suppose instead that~$\widetilde s\in[0,s)$ and~$\widetilde{p}=p$. In this case, taking~$q:=p$, we see that~$\gamma_q(\theta)=((1-\theta)s,p)$.
Therefore, there exists~$\widetilde\theta\in(0,1]$ such that~$\gamma_q(\widetilde\theta)=(\widetilde s,\widetilde p)$, as desired.

Given these preliminary observations, from now on we suppose that~$\widetilde s\neq s$ and~$\widetilde p\neq p$.

Moreover, we remark that the condition~$\widetilde s\widetilde p=sp$ is incompatible with~\eqref{insiemeimmersioneRN},
since
\[
\widetilde{s}\widetilde{p}
\leq 
\frac{Np \widetilde s}{N-(s-\widetilde s)p} 
=
\frac{\widetilde{s}spp^*_s}{sp+(p^*_s-p)\widetilde{s}}
=\left( \frac{\widetilde{s}p^*_s}{\widetilde{s}p^*_s+p(s-\widetilde{s})}\right)sp<sp.
\]
Therefore, from now on,
we also assume that~$\widetilde s\widetilde p\neq sp$.

Now, we observe that the couple~$(\widetilde{s},\widetilde{p})$ belongs to the curve~$\gamma_q$ if and only if it is contained in the graph of the function~$f_q:[0,s]\to \R$ defined as
\[
f_q(t):=\dfrac{spq}{sp+(q-p)t}  .
\]

We set
\[
q:=\frac{p\widetilde{p}(s-\widetilde{s})}{sp-\widetilde{s}\widetilde{p}}
\]
and we have that
\[
\widetilde{p}=\dfrac{spq}{sp+(q-p)\widetilde{s}}=f_q(\widetilde{s}).
\]

Hence, it remains to check that
\begin{equation}\label{1qaz2wsx3edc4rfv5tgb}
q \in[p,p^*_s].\end{equation} 
For this, we notice that, in light of~\eqref{insiemeimmersioneRN},
\[
q> \frac{p^2(s-\widetilde{s})}{p(s-\widetilde{s})}=p
\]
and
\[
q\leq \dfrac{\dfrac{sp^2 p^*_s\, (s-\widetilde s)}{sp+(p^*_s-p)\widetilde{s}}}{sp-\dfrac{\widetilde{s}spp^*_s}{sp+(p^*_s-p)\widetilde{s}}}=\frac{p^*_s (s-\widetilde{s})}{1-\widetilde{s}}\le p^*_s.
\]
These considerations proves~\eqref{1qaz2wsx3edc4rfv5tgb}, as desired.
\end{proof}

We now check that if~$\widetilde s$ and~$\widetilde p$ satisfy~\eqref{insiemeimmersionecompatta}, they belong to the curve~$\gamma_q$, as stated in this result:

\begin{lemma}\label{curvaq13}
Let~$s\in (0,1]$ and~$p\in [1, +\infty)$ be such that~$sp<N$. Let~$\widetilde{s}$ and~$\widetilde{p}$ satisfy~\eqref{insiemeimmersionecompatta}. 

Then, there exists~$q\in [1,p^*_s)$ such that the curve~$\gamma_q:[0,1]\to \R^2$ defined in~\eqref{curvalemma} contains the point~$(\widetilde{s},\widetilde{p})$, namely there exists~$\widetilde{\theta}\in (0,1]$ such that~$\gamma_q({\widetilde{\theta}})=(\widetilde{s},\widetilde{p})$.
\end{lemma}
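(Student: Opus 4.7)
The plan is to mirror the proof of Lemma~\ref{curvaq11}: choose $q$ so that the curve $\gamma_q$ passes through $(\widetilde{s},\widetilde{p})$, then verify $q\in[1,p^*_s)$. Since $\gamma_q(\theta)$ has first coordinate $(1-\theta)s$ and the second coordinate, written as a function of $t=(1-\theta)s$, is $f_q(t)=spq/(sp+(q-p)t)$, solving $f_q(\widetilde{s})=\widetilde{p}$ for $q$ leads to the explicit choice
\[
q := \frac{p\widetilde{p}(s-\widetilde{s})}{sp-\widetilde{s}\widetilde{p}},
\]
paired with $\widetilde{\theta}:=1-\widetilde{s}/s$. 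Since $\widetilde{s}\in[0,s)$ by hypothesis, $\widetilde{\theta}\in(0,1]$ as required, and a direct substitution gives $\gamma_q(\widetilde{\theta})=(\widetilde{s},\widetilde{p})$. The degenerate case $\widetilde{s}=0$ collapses to $q=\widetilde{p}$ with $\widetilde{\theta}=1$, which is automatically admissible since \eqref{insiemeimmersionecompatta} then reads $1\leq\widetilde{p}<p^*_s$.

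First I would dispose of the well-definedness of $q$: the strict upper bound $\widetilde{p}<Np/(N-(s-\widetilde{s})p)$ in \eqref{insiemeimmersionecompatta} yields $\widetilde{s}\widetilde{p}<sp$ by exactly the chain of inequalities displayed in the proof of Lemma~\ref{curvaq11}, so the denominator defining $q$ is positive. For the upper bound $q<p^*_s$, the same computation carries over verbatim from Lemma~\ref{curvaq11}, the conclusion $q\leq p^*_s(s-\widetilde{s})/(1-\widetilde{s})\leq p^*_s$ now becoming strict because our hypothesis on $\widetilde{p}$ is strict.

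The genuinely new point, and the step I expect to be the main (if still routine) obstacle, is the lower bound $q\geq 1$. Here I would observe the algebraic equivalence
\[
q\geq 1 \;\Longleftrightarrow\; p\widetilde{p}(s-\widetilde{s})\geq sp-\widetilde{s}\widetilde{p} \;\Longleftrightarrow\; \widetilde{p}\bigl(sp-(p-1)\widetilde{s}\bigr)\geq sp \;\Longleftrightarrow\; \widetilde{p}\geq \frac{sp}{sp-(p-1)\widetilde{s}},
\]
which reveals that the somewhat unusual lower bound on $\widetilde{p}$ in \eqref{insiemeimmersionecompatta} is tailored precisely to force $q\geq 1$. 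The difficulty is conceptual rather than technical: once one recognises that this bound is the exact algebraic translation of $q\geq 1$, the proof reduces to reading the hypothesis backwards, and the remaining verifications are identical to those already performed in Lemma~\ref{curvaq11}.
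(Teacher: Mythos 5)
Your proposal is correct and follows essentially the same route as the paper's own proof of Lemma~\ref{curvaq13}: the same explicit choice of $q=p\widetilde p(s-\widetilde s)/(sp-\widetilde s\widetilde p)$, the same verification that the strict upper bound on $\widetilde p$ forces $q<p^*_s$, and the same identification of the lower bound $\widetilde p\geq sp/(sp-(p-1)\widetilde s)$ as the exact algebraic translation of $q\geq 1$. The only cosmetic difference is that the paper treats $\widetilde p=p$ as a preliminary special case before assuming $\widetilde p\neq p$, whereas you note that the general formula already yields $q=p$ there; both are fine.
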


\begin{proof}
The proof of this result is similar to the proof of Lemma~\ref{curvaq11}. For the reader convenience, we provide all the details here.

If~$\widetilde s\in[0,s)$ and~$\widetilde{p}=p$,
we take~$q:=p$ and we see that~$\gamma_q(\theta)=((1-\theta)s,p)$.
Therefore, there exists~$\widetilde\theta\in[0,1]$ such that~$\gamma_q(\widetilde\theta)=(\widetilde s,\widetilde p)$, as desired.
Hence, we will now suppose that~$\widetilde p\neq p$.

{F}rom~\eqref{insiemeimmersionecompatta} we also have that
\[
\widetilde{s}\widetilde{p}\le \frac{Np \widetilde s}{N-(s-\widetilde s)p}
=\frac{\widetilde{s}spp^*_s}{sp+(p^*_s-p)\widetilde{s}}
=\left( \frac{\widetilde{s}p^*_s}{\widetilde{s}p^*_s+p(s-\widetilde{s})}\right)sp<sp.
\]

Now, we observe that the point~$(\widetilde{s},\widetilde{p})$ 
belongs to the curve~$\gamma_q$ in~\eqref{curvalemma}
if and only if it is contained in the graph of the function~$f_q:[0,s]\to \R$ defined as
\[
f_q(t):=
\dfrac{spq}{sp+(q-p)t} .
\]
We set
\begin{equation*}
q:=\frac{p\widetilde{p}(s-\widetilde{s})}{sp-\widetilde{s}\widetilde{p}}
\end{equation*}
and we point out that
\[
\widetilde{p}=
\dfrac{spq}{sp+(q-p)\widetilde{s}}=f_q(\widetilde{s}).
\]

Thus, the proof of Lemma~\ref{curvaq13} is complete if we show that~$q\in[1,p^*_s)$. 
For this, one notices that, by~\eqref{insiemeimmersionecompatta},
\[
q\geq \dfrac{\dfrac{sp^2 (s-\widetilde{s})}{sp-(p-1)\widetilde s}}{sp-\dfrac{\widetilde{s}sp}{sp-(p-1)\widetilde s}}=1
\]
and \begin{equation*}
q< \dfrac{\dfrac{sp^2 p^*_s (s-\widetilde{s})}{sp+(p^*_s-p)\widetilde{s}}}{sp-\dfrac{\widetilde{s}spp^*_s}{sp+(p^*_s-p)\widetilde{s}}}=\frac{p^*_s (s-\widetilde{s})}{1-\widetilde{s}}\le p^*_s.
\qedhere
\end{equation*}
\end{proof}

We now show that, under that assumption that the points~$\widetilde{s}$ and~$\widetilde{p}$ satisfy~\eqref{insiemeimmersioneRNsp=N}, a result similar to the one presented in Lemma~\ref{curvaq13} holds true when~$sp=N$.  We point out that this choice allows the point~$q$ to be selected in a larger interval.

\begin{lemma}\label{curvaq1sp=N}
Let~$s\in (0,1]$ and~$p\in [1, +\infty)$ be such that~$sp=N$.
Let~$\widetilde{s}$ and~$\widetilde{p}$ satisfy~\eqref{insiemeimmersioneRNsp=N}. 

Then, there exists~$q\in [p,+\infty)$ such that the curve~$\gamma_q:[0,1]\to \R^2$ defined in~\eqref{curvalemma} contains the point~$(\widetilde{s},\widetilde{p})$, namely there exists~$\widetilde{\theta}\in (0,1]$ such that~$\gamma_q({\widetilde{\theta}})=(\widetilde{s},\widetilde{p})$.
\end{lemma}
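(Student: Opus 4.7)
My plan is to follow the strategy used in Lemmas~\ref{curvaq11} and~\ref{curvaq13}, taking into account that in the present regime $sp=N$ the Sobolev exponent $p^*_s$ formally becomes~$+\infty$, so that the natural range of the auxiliary parameter~$q$ widens to~$[p,+\infty)$. I would first dispose of the boundary configurations: when $\widetilde s=s$ the constraint $p\le\widetilde p\le N/s=p$ forces $\widetilde p=p$, and any $q\in[p,+\infty)$ works with the corresponding $\widetilde\theta=0$; when $\widetilde s=0$ the choice $\widetilde\theta=1$ yields $\gamma_q(1)=(0,q)$, so it suffices to take $q:=\widetilde p$; and when $\widetilde s\in(0,s)$ with $\widetilde p=p$, the choice $q:=p$ collapses $\gamma_p$ to the horizontal segment $\{((1-\theta)s,p):\theta\in[0,1]\}$ and $\widetilde\theta:=1-\widetilde s/s$ captures $(\widetilde s,\widetilde p)$.

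In the main case $\widetilde s\in(0,s)$ and $\widetilde p\in(p,N/\widetilde s)$, I would observe that imposing $\gamma_q(\widetilde\theta)=(\widetilde s,\widetilde p)$ forces $\widetilde\theta=1-\widetilde s/s\in(0,1)$ from the first coordinate, after which the second coordinate determines~$q$ uniquely. Solving it yields the same explicit expression as in the proofs of Lemmas~\ref{curvaq11} and~\ref{curvaq13}, namely
\[
q:=\frac{p\,\widetilde p\,(s-\widetilde s)}{sp-\widetilde s\,\widetilde p}.
\]
A direct substitution then confirms $\gamma_q(\widetilde\theta)=(\widetilde s,\widetilde p)$, and the lower bound $q\ge p$ follows immediately from $\widetilde p\ge p$ by the same algebra used in Lemma~\ref{curvaq13}.

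The hard part is the control of the denominator $sp-\widetilde s\,\widetilde p$. The hypothesis~\eqref{insiemeimmersioneRNsp=N} only guarantees $\widetilde s\,\widetilde p\le N=sp$, and on the critical curve $\widetilde s\,\widetilde p=N$ the expression above degenerates as $q\to+\infty$. Geometrically, the curves $\gamma_q$ foliate the open hypograph of $\widetilde s\,\widetilde p=N$ and reach this critical curve only in the limit $q\to+\infty$, which is precisely why the statement allows~$q$ to range over the half-line $[p,+\infty)$ without an upper bound. I would then handle the boundary case $\widetilde s\,\widetilde p=N$ separately, either by an approximation argument (replacing $\widetilde p$ by $\widetilde p-\varepsilon$, running the construction above, and letting $\varepsilon\searrow 0$), or by appealing to the $BMO$-type information encoded in Corollary~\ref{corollaryBMO}.
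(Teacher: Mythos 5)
Your computation in the main case is identical to the paper's: the same explicit choice
\[
q=\frac{p\,\widetilde p\,(s-\widetilde s)}{sp-\widetilde s\,\widetilde p},
\]
verified to lie in $[p,+\infty)$ via the inequality $\widetilde p\ge p$. In fact your treatment of the critical curve is \emph{more} careful than the paper's own proof. The paper simply asserts that \eqref{insiemeimmersioneRNsp=N} implies $\widetilde s\,\widetilde p<N=sp$, but that assertion is false: the constraint allows $\widetilde p=N/\widetilde s$, in which case $\widetilde s\,\widetilde p=N$. You correctly notice that on this curve the denominator $sp-\widetilde s\,\widetilde p$ vanishes and $q$ formally diverges.

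That said, your proposed fixes cannot repair the lemma as stated, because at those points the conclusion is genuinely false. For $\widetilde s\in(0,s)$ with $\widetilde s\,\widetilde p=N$, the first coordinate of $\gamma_q$ forces $\widetilde\theta=1-\widetilde s/s\in(0,1)$, and then the second coordinate $\frac{pq}{q-\widetilde\theta(q-p)}$ is strictly less than $p/(1-\widetilde\theta)=N/\widetilde s$ for every finite $q$ and only reaches it in the limit $q\to+\infty$; so $(\widetilde s,\widetilde p)$ lies on no $\gamma_q$ with $q<+\infty$. An approximation argument yields $q_\varepsilon\to+\infty$, not an admissible $q$, and invoking Corollary~\ref{corollaryBMO} would be circular, since that corollary is deduced from Theorem~\ref{teorema1sp=N}, which in turn relies on the present lemma. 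The paper avoids the pitfall only because in the proof of Theorem~\ref{teorema1sp=N} the subcase $\widetilde s\,\widetilde p=N$ is dispatched first, by the Sobolev/Gagliardo--Nirenberg result of~\cite{MR3990737}, and the lemma is invoked only off that curve; the cleanest remedy is to add the hypothesis $\widetilde s\,\widetilde p<N$ to the lemma (and, for consistency, to allow $\widetilde\theta=0$ as Lemma~\ref{curvaq11} does, since the case $\widetilde s=s$, $\widetilde p=p$ also requires it).
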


\begin{proof}
The proof of this result is similar to the proof of Lemmata~\ref{curvaq11}
and~\ref{curvaq13}. For the reader convenience, we provide all the details here.

If~$\widetilde s=s$, then~\eqref{insiemeimmersioneRNsp=N}
and the fact that~$sp=N$ give that~$\widetilde p=p$,
and therefore~$\gamma_q(0)=(\widetilde{s},\widetilde{p})$
for every~$q\in [p,+\infty)$.

Moreover, if~$\widetilde s\in(0,s)$ and~$\widetilde{p}=p$, we take~$q:=p$ and we see that~$\gamma_q(\theta)=((1-\theta)s,p)$.
Therefore, there exists~$\widetilde\theta\in[0,1]$ such that~$\gamma_q(\widetilde\theta)=(\widetilde s,\widetilde p)$, as desired.

Given these preliminary observations, from now on we suppose that~$\widetilde s\neq s$ and~$\widetilde p\neq p$.

Also, we note that if~$\widetilde{s}$ and~$\widetilde{p}$ satisfy condition~\eqref{insiemeimmersioneRNsp=N}, then~$
\widetilde{s}\widetilde{p}<N=sp$.

Now, we notice that the point~$(\widetilde{s},\widetilde{p})$ 
belongs to the curve~$\gamma_q$ in~\eqref{curvalemma}
if and only if it is contained in the graph of the function~$f_q:[0,s]\to \R$ defined as
\[
f_q(t):=\dfrac{spq}{sp+(q-p)t}.
\]
Thus, we set
\[
q:=\frac{p\widetilde{p}(s-\widetilde{s})}{sp-\widetilde{s}\widetilde{p}}
\]
and we find that
\[
\widetilde{p}=
\dfrac{spq}{sp+(q-p)\widetilde{s}}=f_q(\widetilde{s}).
\]
Since~$\widetilde{p}\geq p$, we have that
\[
q\geq \frac{p^2(s-\widetilde{s})}{p(s-\widetilde{s})}=p,
\] 
which concludes the proof.
\end{proof}

\subsection{An embedding result}
We now state a preliminary embedding result which will be useful to prove Theorem~\ref{teorema2}. We point out that our result is similar to~\cite[Lemma~2.1]{MR3910033}, but we provide here a self-contained proof that does not involve the use of Besov or other interpolation spaces.

\begin{lemma}\label{immersionidisotto}
Let~$s\in (0,1)$ and~$p\in [1, +\infty]$. Let~$\Omega\subset \R^N$ be a bounded Lipschitz domain. Let~$\widetilde{s}\in(0,s)$ and~$\widetilde{p}\in[1,p]$.

Then, there exists a positive constant~$C=C(N,s,p,\widetilde{s},\widetilde{p},\Omega)$ such that, for any~$u\in W^{s,p}(\Omega)$,
\[
\|u\|_{W^{\widetilde{s},\widetilde{p}}(\Omega)}
\leq C\|u\|_{W^{s,p}(\Omega)},
\]
namely, the space~$W^{s,p}(\Omega)$ is continuously embedded in~$W^{\widetilde{s},\widetilde{p}}(\Omega)$.
\end{lemma}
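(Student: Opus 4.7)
The plan is to estimate the two pieces of the norm $\|u\|_{W^{\widetilde{s},\widetilde{p}}(\Omega)}$ separately. The $L^{\widetilde p}(\Omega)$ part is immediate: since $\Omega$ is bounded and $\widetilde p\le p$, Hölder's inequality yields $\|u\|_{L^{\widetilde p}(\Omega)}\le |\Omega|^{\frac1{\widetilde p}-\frac1p}\|u\|_{L^p(\Omega)}$ (with the obvious interpretation when $p=+\infty$). So the real work is to control the Gagliardo seminorm $[u]_{W^{\widetilde s,\widetilde p}(\Omega)}$ by $\|u\|_{W^{s,p}(\Omega)}$.

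For the seminorm, I would split the domain of integration $\Omega\times\Omega$ into the far region $B=\{(x,y)\in\Omega\times\Omega:\,|x-y|\ge 1\}$ and the near-diagonal region $A=\{(x,y)\in\Omega\times\Omega:\,|x-y|<1\}$. On $B$ the kernel $|x-y|^{-N-\widetilde s\widetilde p}$ is bounded by $1$, so the elementary inequality $|u(x)-u(y)|^{\widetilde p}\le 2^{\widetilde p-1}(|u(x)|^{\widetilde p}+|u(y)|^{\widetilde p})$ together with Fubini gives a bound of the form $C|\Omega|\,\|u\|_{L^{\widetilde p}(\Omega)}^{\widetilde p}$, which by the first step is dominated by $C\|u\|_{L^p(\Omega)}^{\widetilde p}$.

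On the near-diagonal region $A$, assuming first $\widetilde p<p<+\infty$, I would apply Hölder's inequality with exponents $p/\widetilde p$ and $p/(p-\widetilde p)$ after splitting the integrand as
\[
\frac{|u(x)-u(y)|^{\widetilde p}}{|x-y|^{N+\widetilde s\widetilde p}}
=\frac{|u(x)-u(y)|^{\widetilde p}}{|x-y|^{(N+sp)\widetilde p/p}}\cdot\frac{1}{|x-y|^{N+\widetilde s\widetilde p-(N+sp)\widetilde p/p}}.
\]
This yields $[u]_{W^{s,p}(\Omega)}^{\widetilde p}$ multiplied by $\bigl(\iint_A |x-y|^{-\alpha}\,dxdy\bigr)^{(p-\widetilde p)/p}$ where a direct computation gives $\alpha=N-\frac{\widetilde p p(s-\widetilde s)}{p-\widetilde p}$. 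Since $s>\widetilde s$ and $\widetilde p<p$, we have $\alpha<N$, and therefore $\iint_A|x-y|^{-\alpha}\,dxdy<+\infty$ (use Fubini and pass to polar coordinates in the inner integral, bounded by the integral over a ball of radius $\mathrm{diam}(\Omega)$).

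The remaining boundary cases should be handled separately. When $\widetilde p=p$ (including $\widetilde p=p=+\infty$), I would invoke Theorem~\ref{THMbrezismironuescu} with $s_1=0$, $s_2=s$, $p_1=p_2=p$ and some $\widetilde\theta\in(0,1)$ so that $(1-\widetilde\theta)s=\widetilde s$, exactly as in the proof of Lemma~\ref{orizzontalecompatto}. When $p=+\infty$ and $\widetilde p<+\infty$, one has $|u(x)-u(y)|\le[u]_{C^{0,s}(\Omega)}|x-y|^s$, and the resulting integral on $A$ reduces to $\iint_A|x-y|^{-N+(s-\widetilde s)\widetilde p}\,dxdy$, which is finite since $s>\widetilde s$. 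The main delicacy is just verifying that $\alpha<N$ in the principal case; everything else is routine bookkeeping over the various ranges of the exponents.
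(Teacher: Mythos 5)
Your proof is correct and its core is the same Hölder argument with the same factorization of the Gagliardo integrand that the paper uses; the exponent computation leading to $\alpha=N-\frac{p\widetilde p(s-\widetilde s)}{p-\widetilde p}<N$ is exactly the paper's, and your treatment of $p=+\infty$, $\widetilde p<+\infty$ via the $C^{0,s}$ bound matches the paper as well. Where you differ: you split $\Omega\times\Omega$ into near-diagonal and far regions, but since $\Omega$ is bounded the whole double integral $\iint_{\Omega\times\Omega}|x-y|^{-\alpha}\,dx\,dy$ is finite whenever $\alpha<N$ (the far piece contributes nothing problematic because $|x-y|\le\mathrm{diam}(\Omega)$), so the paper applies Hölder directly on all of $\Omega\times\Omega$ and skips the decomposition entirely — the split would matter on $\R^N$ but is redundant here. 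For the boundary case $\widetilde p=p$ you invoke the Brezis--Mironescu interpolation theorem (which is legitimate, and indeed $s\ne p$ holds automatically since $s<1\le p$), whereas the paper observes that in the same factorization the second factor becomes $|x-y|^{\widetilde p(s-\widetilde s)}$, which is simply bounded on the bounded set $\Omega\times\Omega$; the paper's version is more elementary and does not need the interpolation machinery for this particular subcase. Both routes are valid; the paper's is marginally leaner.
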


\begin{proof}
We first suppose that~$p\in[1,+\infty)$ and we claim that
\begin{equation}\label{lemmaimmersione1}
\|u\|_{L^{\widetilde{p}}(\Omega)}\le C \, \|u\|_{L^p(\Omega)}.
\end{equation}
Indeed, if~$\widetilde p= p$, then the claim is obvious. If instead~$\widetilde p\in [1,p)$, by the H\"older inequality with exponents~$p/\widetilde p$ and~$p/(p-\widetilde p)$,
\begin{equation*}
\|u\|_{L^{\widetilde{p}}(\Omega)}\le |\Omega|^{\frac{1}{\widetilde p}-\frac{1}{p}} \, \|u\|_{L^p(\Omega)},
\end{equation*} which gives~\eqref{lemmaimmersione1}.

We also show that
\begin{equation}\label{lemmaimmersione3}
[u]_{W^{\widetilde{s},\widetilde{p}}(\Omega)}\le C \, [u]_{W^{s,p}(\Omega)}.
\end{equation}
To this end, it is convenient to write
\[
\frac{|u(x)-u(y)|^{\widetilde{p}}}{|x-y|^{N+\widetilde{s}\widetilde{p}}}=\frac{|u(x)-u(y)|^{\widetilde{p}}}{|x-y|^{\frac{N\widetilde{p}}{p}+s\widetilde{p}}} \ 
\frac{1}{|x-y|^{\frac{N(p-\widetilde{p})}{p}-\widetilde{p}(s-\widetilde{s})}}.
\]
If~$\widetilde p=p$, then the claim in~\eqref{lemmaimmersione3}
follows from this and the fact that~$\Omega$ is bounded.

If instead~$\widetilde p\in[1,p)$,
we use the H\"older inequality with exponents~$p/\widetilde p$ and~$p/(p-\widetilde p)$ to find that
\begin{equation*}
\begin{split}
\iint_{\Omega\times\Omega}\frac{|u(x)-u(y)|^{\widetilde{p}}}{|x-y|^{N+\widetilde{s}\widetilde{p}}}\,dx\,dy \le\;& \left(\;\iint_{\Omega\times\Omega}\frac{|u(x)-u(y)|^p}{|x-y|^{N+sp}}\,dx\,dy \right)^\frac{\widetilde{p}}{p}
\left(\;\iint_{\Omega\times\Omega}\frac{dx\,dy }{|x-y|^{N-\frac{p\widetilde{p}(s-\widetilde{s})}{p-\widetilde{p}}}}\right)^\frac{p-\widetilde{p}}{p}\\
\le \;& C\left(\;\iint_{\Omega\times\Omega}\frac{|u(x)-u(y)|^p}{|x-y|^{N+sp}}\,dx\,dy \right)^\frac{\widetilde{p}}{p},
\end{split}
\end{equation*}
from which we obtain~\eqref{lemmaimmersione3}.

The claim of Lemma~\ref{immersionidisotto} in this case now follows from~\eqref{lemmaimmersione1} and~\eqref{lemmaimmersione3}.

We now focus on the case~$p=+\infty$.
If in addition~$\widetilde p=+\infty$, our claim becomes
\begin{equation*}
\|u\|_{C^{0,\widetilde{s}}(\Omega)}\leq C\|u\|_{C^{0,s}(\Omega)},
\end{equation*}
which in turn boils down to
\begin{equation}\label{6r7ewvhcdjxvgbcnhjf90loki8ju7hy6gtrfmjnhbtgvrfcd}
[u]_{C^{0,\widetilde{s}}(\Omega)}\leq C[u]_{C^{0,s}(\Omega)}.
\end{equation}
For this, we observe that, for any~$\widetilde s\in(0,s)$,
\begin{eqnarray*}
\frac{|u(x)-u(y)|}{|x-y|^{\widetilde s}}=
\frac{|u(x)-u(y)|}{|x-y|^{s}} |x-y|^{s-\widetilde s}\le [u]_{C^{0,s}(\Omega)} |x-y|^{s-\widetilde s}.
\end{eqnarray*}
Since~$\Omega$ is bounded, this implies~\eqref{6r7ewvhcdjxvgbcnhjf90loki8ju7hy6gtrfmjnhbtgvrfcd},
as desired.

If instead~$\widetilde p\in[1,p)=[1,+\infty)$, our claim becomes
\begin{equation}\label{mnbvcxasdfghjk09876543qwertyuio}
\|u\|_{W^{\widetilde{s},\widetilde{p}}(\Omega)}
\leq C\|u\|_{C^{0,s}(\Omega)}.
\end{equation}
In this situation, we point out that
$$ \|u\|_{L^{\widetilde p}(\Omega)}^p=\int_\Omega |u(x)|^{\widetilde p}\,dx\le \|u\|^{\widetilde p}_{L^\infty(\Omega)} |\Omega|.$$
Also,
\begin{eqnarray*}
\iint_{\Omega\times\Omega}\frac{|u(x)-u(y)|^{\widetilde p}}{|x-y|^{N+\widetilde s\widetilde p}}\,dx\,dy\le [u]^{\widetilde p}_{C^{0,s}(\Omega)}
\iint_{\Omega\times\Omega}\frac{|x-y|^{s\widetilde p}}{|x-y|^{N+\widetilde s\widetilde p}}\,dx\,dy\le C[u]^{\widetilde p}_{C^{0,s}(\Omega)}.
\end{eqnarray*}
{F}rom the last two displays, we obtain~\eqref{mnbvcxasdfghjk09876543qwertyuio}.
\end{proof}

\section{Proof of the embedding results}\label{sectionproofs}

In this section we prove our main embedding results.

\subsection{The case $sp<N$}
We first provide the following useful observation:

\begin{lemma}\label{896364389bcnxmnzfejEERRTC00}
Let~$s\in[0,1]$ and~$p\in[1,+\infty)$ be such that~$sp<N$. Let
\begin{equation}\label{mnbv-h5jk7j6u5h6}
\widetilde s\in[0,s]\qquad{\mbox{and}}\qquad\widetilde p\in\left[1,\frac{Np}{N-(s-\widetilde s)p}\right].\end{equation}
 
Let~$\gamma$ be the curve defined in~\eqref{curva1}.
Let~$0\leq \theta_1 < \theta_2 \leq 1$ and let~$\gamma(\theta_1)=(s_{\theta_1},p_{\theta_1})$
and~$\gamma(\theta_2)=(s_{\theta_2},p_{\theta_2})$. 

Then, 
\begin{equation}\label{6473829gfhdj098765bnvmc}
0\le s_{\theta_2}\le s_{\theta_1}\le 1,\qquad
\min\{p,\widetilde p\}\le p_{\theta_1},  p_{\theta_2}
\qquad{\mbox{and}}\qquad  p_{\theta_2}\le
\frac{Np_{\theta_1}}{N-(s_{\theta_1}-s_{\theta_2})p_{\theta_1}}.
\end{equation}

If~$p\le \widetilde p$, then~$p_{\theta_1}\le p_{\theta_2}$.

Furthermore, if in addition
\begin{equation}\label{bvgdyr658emdot0897ufhdn}
\widetilde p\ge \frac{sp}{sp-(p-1)\widetilde s},\end{equation}
then
\begin{equation}\label{bvgdyr658emdot0897ufhdn2}
p_{\theta_2}\ge \frac{s_{\theta_1} p_{\theta_1}}{s_{\theta_1} p_{\theta_1}-(p_{\theta_1}-1)s_{\theta_2}}.
\end{equation}

Moreover, $s_{\theta_1}=s_{\theta_2}$ if and only if~$s=\widetilde s$,
and~$p_{\theta_1}= p_{\theta_2}$ if and only if~$p=\widetilde p$.

Also,
$$ p_{\theta_2}=
\frac{Np_{\theta_1}}{N-(s_{\theta_1}-s_{\theta_2})p_{\theta_1}} $$
if and only if
$$\widetilde p= \frac{Np}{N-(s-\widetilde s)p}.$$
\end{lemma}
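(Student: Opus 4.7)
The plan is to exploit the fact that the curve $\gamma$ becomes linear under the ``right'' parametrization: namely, $s_\theta=(1-\theta)s+\theta\widetilde s$ is affine in $\theta$, and an immediate computation gives
\[
\frac{1}{p_\theta}=(1-\theta)\frac{1}{p}+\theta\frac{1}{\widetilde p},
\]
which is also affine in $\theta$. Once these two observations are in hand, essentially every claim in the statement reduces to comparing two values of a linear (or monotone) function of $\theta$ at $\theta_1$ and $\theta_2$.

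First I would dispatch the easy monotonicity statements. Since $\widetilde s\le s$ and $s_\theta$ is affine, $s_\theta$ is non-increasing, hence $s_{\theta_2}\le s_{\theta_1}$, and the bounds $0$ and $1$ are just the endpoint values $\widetilde s$ and $s$. Because $1/p_\theta$ is affine and interpolates between $1/p$ (at $\theta=0$) and $1/\widetilde p$ (at $\theta=1$), $p_\theta$ is monotone and its range is $[\min\{p,\widetilde p\},\max\{p,\widetilde p\}]$; in particular $\min\{p,\widetilde p\}\le p_{\theta_1},p_{\theta_2}$, and if $p\le\widetilde p$ then $p_\theta$ is non-decreasing, so $p_{\theta_1}\le p_{\theta_2}$. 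The equality characterizations $s_{\theta_1}=s_{\theta_2}\iff s=\widetilde s$ and $p_{\theta_1}=p_{\theta_2}\iff p=\widetilde p$ are immediate from the strict monotonicity that holds when the endpoints differ.

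Next, I would rewrite the Sobolev-type inequality $p_{\theta_2}\le Np_{\theta_1}/(N-(s_{\theta_1}-s_{\theta_2})p_{\theta_1})$ in the equivalent ``subcritical'' form
\[
\frac{s_{\theta_1}-s_{\theta_2}}{N}\ge\frac{1}{p_{\theta_1}}-\frac{1}{p_{\theta_2}}.
\]
Using the affine formulas, both sides are $(\theta_2-\theta_1)$ times, respectively, $(s-\widetilde s)/N$ and $(1/p-1/\widetilde p)$. Since $\theta_2-\theta_1>0$, this is equivalent to $(s-\widetilde s)/N\ge 1/p-1/\widetilde p$, i.e.\ to the hypothesis $\widetilde p\le Np/(N-(s-\widetilde s)p)$ in~\eqref{mnbv-h5jk7j6u5h6}. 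The same reduction reads the equality case as $\widetilde p=Np/(N-(s-\widetilde s)p)$, which is the last ``if and only if'' in the statement.

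The main obstacle, and the step requiring the most care, is~\eqref{bvgdyr658emdot0897ufhdn2}. The trick is to rewrite both the hypothesis~\eqref{bvgdyr658emdot0897ufhdn} and the conclusion in the form $(1-1/q)/r$: after clearing denominators, \eqref{bvgdyr658emdot0897ufhdn} becomes $\frac{1-1/\widetilde p}{\widetilde s}\ge \frac{1-1/p}{s}$, while \eqref{bvgdyr658emdot0897ufhdn2} becomes $\frac{1-1/p_{\theta_2}}{s_{\theta_2}}\ge\frac{1-1/p_{\theta_1}}{s_{\theta_1}}$. I would therefore study the function
\[
\phi(\theta):=\frac{1-\frac{1}{p_\theta}}{s_\theta}=\frac{(1-\theta)\bigl(1-\frac{1}{p}\bigr)+\theta\bigl(1-\frac{1}{\widetilde p}\bigr)}{(1-\theta)s+\theta\widetilde s},
\]
a ratio of two affine functions of $\theta$. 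A direct differentiation collapses to
\[
\phi'(\theta)=\frac{\bigl(1-\frac{1}{\widetilde p}\bigr)s-\bigl(1-\frac{1}{p}\bigr)\widetilde s}{s_\theta^{\,2}},
\]
whose numerator is non-negative precisely under~\eqref{bvgdyr658emdot0897ufhdn}. Hence $\phi$ is non-decreasing, so $\phi(\theta_2)\ge\phi(\theta_1)$, which is~\eqref{bvgdyr658emdot0897ufhdn2}. The only subtlety is the degenerate case $s_{\theta_1}=s_{\theta_2}=0$ (which forces $s=\widetilde s=0$, since $s_\theta$ is affine and $0\le s_{\theta_2}\le s_{\theta_1}$): there \eqref{bvgdyr658emdot0897ufhdn2} reduces to the trivial $p_{\theta_2}\ge 1$, and can be checked separately.
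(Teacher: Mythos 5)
Your proof is correct and follows essentially the same route as the paper: both exploit the fact that $s_\theta$ and $1/p_\theta$ are affine in $\theta$, reducing each claim to an endpoint comparison. The only cosmetic difference is in establishing~\eqref{bvgdyr658emdot0897ufhdn2}, where the paper expands the quantity $(1-1/p_{\theta_1})s_{\theta_2}-(1-1/p_{\theta_2})s_{\theta_1}$ directly into $(\theta_2-\theta_1)\big((1-1/p)\widetilde s-(1-1/\widetilde p)s\big)$, whereas you phrase the same inequality as monotonicity of the M\"obius function $\phi(\theta)=(1-1/p_\theta)/s_\theta$ via its derivative; these are the same computation up to clearing the denominator $s_{\theta_1}s_{\theta_2}$.
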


\begin{proof}
By inspection, one sees that~$0\le s_{\theta_2}\le s_{\theta_1}\le 1$.
Also, for all~$\theta\in[0,1]$,
\begin{equation}\label{4935634ewfgkjewbgbvsdnmvdmn}
\frac1{p_\theta}=
\frac{\widetilde{p}+\theta(p-\widetilde{p})}{p\widetilde{p}}
=\frac1p +\theta\left(\frac1{\widetilde p}-\frac1p\right)\le
\max\left\{\frac1p,\frac1{\widetilde p}\right\}=\frac1{\min\{p,\widetilde p\}}.
\end{equation}

Hence, to complete the proof of~\eqref{6473829gfhdj098765bnvmc},
it remains to check that
\begin{equation}\label{896364389bcnxmnzfejEERRTC}
p_{\theta_2}\le
\frac{Np_{\theta_1}}{N-(s_{\theta_1}-s_{\theta_2})p_{\theta_1}}.
\end{equation}
To this end, we first observe that, thanks to~\eqref{mnbv-h5jk7j6u5h6},
$$ \frac{N}{p}-\frac{N}{\widetilde p}\le s-\widetilde s.$$
This implies that
\begin{eqnarray*}&& \frac1{p_{\theta_1}}-\frac1{p_{\theta_2}}
=\frac{\widetilde p-\theta_1(\widetilde p-p)}{p\widetilde p}
-\frac{\widetilde p-\theta_2(\widetilde p-p)}{p\widetilde p}
=(\theta_2-\theta_1)\left(\frac{1}{p}-\frac{1}{\widetilde p}\right)
\\&&\qquad\qquad
\le \frac{(\theta_2-\theta_1)(s-\widetilde s)}{N}=\frac{s_{\theta_1}-s_{\theta_2}}{N}
.\end{eqnarray*}
{F}rom this, the desired result in~\eqref{896364389bcnxmnzfejEERRTC}
plainly follows.

If in addition~$p\le\widetilde p$,
by the definition of~$p_\theta$ we have that~$ p_{\theta_1}\le p_{\theta_2}$.

We now also assume~\eqref{bvgdyr658emdot0897ufhdn}
and we prove~\eqref{bvgdyr658emdot0897ufhdn2}.
For this, we introduce the notation
$$ \alpha:=1-\frac1p,\qquad\widetilde\alpha:=1-\frac1{\widetilde p}\qquad
{\mbox{and}}\qquad \beta:=s-\widetilde s$$
and we observe that
\begin{eqnarray*}&&
\left(1-\frac1{p_{\theta_1}}\right)s_{\theta_2}-
\left(1-\frac1{p_{\theta_2}}\right)s_{\theta_1}\\&=&
\big(\alpha-\theta_1(\alpha-\widetilde\alpha)\big)(s-\theta_2\beta)-
\big(\alpha-\theta_2(\alpha-\widetilde\alpha)\big)(s-\theta_1\beta)\\&=&
\beta\alpha(\theta_1-\theta_2)
+(\alpha-\widetilde\alpha)\big(\theta_2(s-\theta_1\beta)
-\theta_1(s-\theta_2\beta)\big)\\&=&
\beta\alpha(\theta_1-\theta_2)
+(\alpha-\widetilde\alpha)(\theta_2-\theta_1)s\\
&=&(\theta_2-\theta_1)\big(
\alpha( s-\beta)-\widetilde\alpha s\big)\\&=&
(\theta_2-\theta_1)\big(
\alpha\widetilde s-\widetilde\alpha s\big)\\&=&
(\theta_2-\theta_1)\left(
\left(1-\frac1p\right)\widetilde s-\left(1-\frac1{\widetilde p}\right)s\right).
\end{eqnarray*}
In light of this, we have that~\eqref{bvgdyr658emdot0897ufhdn}
implies~\eqref{bvgdyr658emdot0897ufhdn2}.

Finally, the last statements follow from the definition in~\eqref{curva1}.
\end{proof}

We now address the case in which the domain is~$\R^N$, by proving Theorem~\ref{teorema1}.

\begin{proof}[Proof of Theorem~\ref{teorema1}]
Let~$\widetilde{s}$ and~$\widetilde{p}$ satisfy~\eqref{insiemeimmersioneRN}. Then, by Lemma~\ref{curvaq11} there exist~$q\in[p,p^*_s]$ and~$\widetilde{\theta}\in [0,1]$ such that
\[
\widetilde{s}=(1-\widetilde{\theta})s \qquad \mbox{and} \qquad{\widetilde{p}}=
\frac{pq}{q-\widetilde\theta(q-p)}=\frac{pq}{p\widetilde\theta +q(1-\widetilde\theta)}.
\]
Hence, we can exploit Theorem~\ref{THMbrezismironuescu}
with~$s_1:=0$, $s_2:=s$, $p_1:=q$ and~$p_2:=p$ and obtain that
there exists a positive constant~$C_1=C_1(N, s, p, \widetilde s, \widetilde p)$ such that, for any~$u\in L^q(\R^N)\cap W^{s,p}(\R^N)$,
\begin{equation}\label{interpolazione11}
\|u\|_{W^{\widetilde{s},\widetilde{p}}(\R^N)}\le C_1 \|u\|_{L^q(\R^N)}^{\widetilde{\theta}}\|u\|_{W^{s,p}(\R^N)}^{1-\widetilde{\theta}}.
\end{equation}

Moreover, since~$q\in [p,p^*_s]$, the space~$W^{s,p}(\R^N)$ is continuously embedded in~$L^q(\R^N)$, namely
\[
\|u\|_{L^q(\R^N)}\le C_2\|u\|_{W^{s,p}(\R^N)}
\]
for some~$C_2=C_2(N, p, s)>0$ (see e.g.~\cite[Theorem 6.5]{MR2944369}). 

In light of this and~\eqref{interpolazione11}, for any~$u\in L^q(\R^N)\cap W^{s,p}(\R^N)=W^{s,p}(\R^N)$,
\[
\|u\|_{W^{\widetilde{s},\widetilde{p}}(\R^N)}\le C_1\, C_2^{\widetilde{\theta}}\|u\|_{W^{s,p}(\R^N)},
\]
which is~\eqref{inequality1}.

Having established the first claim of Theorem~\ref{teorema1},
we now focus on the second one.
For this, let~$0\leq \theta_1 < \theta_2 \leq 1$ and consider~$\gamma(\theta_1)=(s_{\theta_1},p_{\theta_1})$
and~$\gamma(\theta_2)=(s_{\theta_2},p_{\theta_2})$,
where the curve~$\gamma$ has been defined in~\eqref{curva1}. 
Thanks to the assumptions in~\eqref{insiemeimmersioneRN} and Lemma~\ref{896364389bcnxmnzfejEERRTC00}, we are in the position of 
exploiting~\eqref{inequality1}
with~$s:=s_{\theta_1}$, $p:=p_{\theta_1}$, $\widetilde s:=s_{\theta_2}$
and~$\widetilde p:=p_{\theta_2}$ and we obtain that~$
W^{s_{\theta_1}, p_{\theta_1}}(\R^N)$
is continuously embedded in~$W^{s_{\theta_2}, p_{\theta_2}}(\R^N)$,
as desired. This concludes the proof of Theorem~\ref{teorema1}.
\end{proof}

We now prove the continuous embedding in the case in which~$\Omega$ is an open bounded Lipschitz domain and~$sp<N$.

\begin{proof}[Proof of Theorem~\ref{teorema2}]
Let~$\widetilde s$ and~$\widetilde p$ satisfy~\eqref{insiemeimmersioneOmega}. Two cases can occur: either~$\widetilde p\ge p$ or~$\widetilde p< p$.

If~$\widetilde p\ge p$, then~$\widetilde s$ and~$\widetilde p$ satisfy~\eqref{insiemeimmersioneRN}. Hence, by Lemma~\ref{curvaq11} 
there exist~$q\in[p,p^*_s]$ and~$\widetilde{\theta}\in [0,1]$
such that
\[
\widetilde{s}=(1-\widetilde{\theta})s \qquad \mbox{and} \qquad{\widetilde{p}}=
\frac{pq}{q-\widetilde\theta(q-p)}=\frac{pq}{p\widetilde\theta +q(1-\widetilde\theta)}.
\]
{F}rom this and Theorem~\ref{THMbrezismironuescu} there exists a positive constant~$C_1=C_1(N, s, p, \widetilde s, \widetilde p)$ such that, for any~$u\in L^q(\Omega)\cap W^{s,p}(\Omega)$,
\begin{equation}\label{interpolazione12}
\|u\|_{W^{\widetilde{s},\widetilde{p}}(\Omega)}
\leq C_1 \|u\|_{L^q(\Omega)}^{\widetilde{\theta}}
\|u\|_{W^{s,p}(\Omega)}^{1-\widetilde{\theta}}.
\end{equation}

Moreover, since~$q\in [p,p^*_s]$, the space~$W^{s,p}(\Omega)$ is 
continuously embedded in~$L^q(\Omega)$, namely there exists~$C_2=C_2(N, s, p, \Omega)>0$ such that
\[
\|u\|_{L^q(\Omega)}\leq C_2\|u\|_{W^{s,p}(\Omega)},
\]
see e.g.~\cite[Theorem 6.7]{MR2944369}. Thus, using this into~\eqref{interpolazione12}, we conclude that,
for any~$u\in L^q(\Omega)\cap W^{s,p}(\Omega)=W^{s,p}(\Omega)$, 
\[
\|u\|_{W^{\widetilde{s},\widetilde{p}}(\Omega)}\le C_1\, C_2^{\widetilde{\theta}}\|u\|_{W^{s,p}(\Omega)}.
\]
which is~\eqref{inequality2}.

Let us consider the case~$\widetilde p< p$. If~$\widetilde s\in(0,s)$, then the desired embedding is a direct consequence of Lemma~\ref{immersionidisotto}. Otherwise, if~$\widetilde s=0$, then the desired result follows e.g. from~\cite[Theorem~6.7]{MR2944369}.

This completes the proof of the first part of Theorem~\ref{teorema2} and
we now focus on the second part.
For this, let~$0\leq \theta_1 < \theta_2 \leq 1$
and consider~$\gamma(\theta_1)=(s_{\theta_1},p_{\theta_1})$
and~$\gamma(\theta_2)=(s_{\theta_2},p_{\theta_2})$,
where the curve~$\gamma$ has been defined in~\eqref{curva1}. 
Thanks to the assumptions in~\eqref{insiemeimmersioneOmega}
and Lemma~\ref{896364389bcnxmnzfejEERRTC00},
we can use~\eqref{inequality2}
with~$s:=s_{\theta_1}$, $p:=p_{\theta_1}$, $\widetilde s:=s_{\theta_2}$
and~$\widetilde p:=p_{\theta_2}$ and
we obtain that~$
W^{s_{\theta_1}, p_{\theta_1}}(\Omega)$
is continuously embedded in~$W^{s_{\theta_2}, p_{\theta_2}}(\Omega)$,
as desired. This concludes the proof of Theorem~\ref{teorema2}.
\end{proof}

We now prove the compact embeddings stated in Theorem~\ref{teorema3} and Corollary~\ref{corollariocompattezza}.

\begin{proof}[Proof of Theorem~\ref{teorema3}]
Let~$\widetilde{s}$ and~$\widetilde{p}$ satisfy~\eqref{insiemeimmersionecompatta}. Then, from Lemma~\ref{curvaq13} there exist~$q\in[1,p^*_s)$ and~$\widetilde{\theta}\in (0,1]$ such that
\[
\widetilde{s}=(1-\widetilde{\theta})s \qquad \mbox{and} \qquad{\widetilde{p}}=
\frac{pq}{q-\widetilde\theta(q-p)}=\frac{pq}{p\widetilde\theta +q(1-\widetilde\theta)}.
\]
Hence, from Theorem~\ref{THMbrezismironuescu} there exists~$C>0$ such that, for any~$u\in L^q(\Omega)\cap W^{s,p}(\Omega)$,
\begin{equation}\label{interpolazione13}
\|u\|_{W^{\widetilde{s},\widetilde{p}}(\Omega)}\le C \|u\|_{L^q(\Omega)}^{\widetilde{\theta}}\|u\|_{W^{s,p}(\Omega)}^{1-\widetilde{\theta}}.
\end{equation}

Moreover, since~$q\in [1,p^*_s)$, the space~$W^{s,p}(\Omega)$ is compactly embedded in~$L^q(\Omega)$, namely for any sequence~$u_n$ in~$W^{s,p}(\Omega)$ which converges weakly to some~$u\in W^{s,p}(\Omega)$,  we have that~$u_n$ converges strongly to~$u$ in~$L^q(\Omega)$ (see e.g.~\cite[Theorem 7.2]{MR2944369}). 

Thus, by~\eqref{interpolazione13}, we infer that
\[
\|u_n -u \|_{W^{\widetilde{s},\widetilde{p}}(\Omega)}\le C \|u_n -u\|^{\widetilde{\theta}}_{L^q(\Omega)} \|u_n -u\|^{1-\widetilde{\theta}}_{W^{s,p}(\Omega)}.
\]
This implies that
\[
\lim_{n\to +\infty} \|u_n -u \|_{W^{\widetilde{s},\widetilde{p}}(\Omega)} =0,
\]
which proves the compact embedding of~$W^{s,p}(\Omega)$ in~$W^{\widetilde{s},\widetilde{p}}(\Omega)$.

Let now~$0<\theta_1<\theta_2\le1$ and consider~$\gamma(\theta_1)=(s_{\theta_1},p_{\theta_1})$
and~$\gamma(\theta_2)=(s_{\theta_2},p_{\theta_2})$,
where the curve~$\gamma$ has been defined in~\eqref{curva1}.
Thanks to~\eqref{insiemeimmersionecompatta}, we are in the position
of using Lemma~\eqref{896364389bcnxmnzfejEERRTC00}.
We thereby deduce that~$
W^{s_{\theta_1}, p_{\theta_1}}(\Omega) $
is compactly embedded in~$W^{s_{\theta_2}, p_{\theta_2}}(\Omega)$,
as desired. This concludes the proof of Theorem~\ref{teorema3}.
\end{proof}

\begin{proof}[Proof of Corollary~\ref{corollariocompattezza}]
Let~$\widetilde{s}$ and~$\widetilde{p}$ satisfy~\eqref{insiemeimmersionecompatta2}.
If they also satisfy~\eqref{insiemeimmersionecompatta}, then the desired result follows
from Theorem~\ref{teorema3}.

We can now assume that~$\widetilde{s}$ and~$\widetilde{p}$ 
satisfy~\eqref{insiemeimmersionecompatta2} but do not satisfy~\eqref{insiemeimmersionecompatta}, namely
\[
0\leq \widetilde{s}<s 
\qquad {\mbox{and}} \qquad
1 \le\widetilde{p}	< \frac{sp}{sp-(p-1)\widetilde{s}}.
\]
In this situation, we have that
$$
0\leq\frac{s+\widetilde{s}}{2}<s $$
and
$$ 1 \le\widetilde{p}
< \frac{sp}{sp-(p-1)\widetilde{s}}\le \frac{Np}{N-\frac{s-\widetilde s}2 p}.
$$
That is, $\frac{s+\widetilde{s}}{2}$ and~$\widetilde{p}$ satisfy~\eqref{insiemeimmersioneOmega},
and therefore we deduce from Theorem~\ref{teorema2}
that the space~$W^{s,p}(\Omega)$ is continuously embedded in~$W^{\frac{s+\widetilde{s}}{2},\widetilde{p}}(\Omega)$.

Thus, if~$u_n$ is a sequence in~$W^{s,p}(\Omega)$ which converges 
weakly to some~$u$ in~$W^{s,p}(\Omega)$, we have that~$u_n$ converges weakly to~$u$ in~$W^{\frac{s+\widetilde{s}}{2},\widetilde{p}}(\Omega)$.

Since~$\widetilde{s}<s$, we are also in the position of using Theorem~\ref{teorema3} and we infer that~$W^{\frac{s+\widetilde{s}}{2},\widetilde{p}}(\Omega)$ is 
compactly embedded in~$W^{\widetilde{s},\widetilde{p}}(\Omega)$.
Then, we conclude that~$u_n$ converges strongly to~$u$ in~$W^{\widetilde{s},\widetilde{p}}(\Omega)$, which completes the proof
of the first part of Corollary~\ref{corollariocompattezza}.

The second part of Corollary~\ref{corollariocompattezza}
is a consequence of the assumptions in~\eqref{insiemeimmersionecompatta2} and Lemma~\ref{896364389bcnxmnzfejEERRTC00}.
\end{proof}

\subsection{The case $sp=N$}

We now address the case~$sp=N$. In this case, we will use the following
observation:

\begin{lemma}\label{896364389bcnxmnzfejEERRTC00BIS}
Let~$s\in[0,1]$ and~$p\in[1,+\infty)$ be such that~$sp=N$. Let
\begin{equation*}
\widetilde s\in[0,s]\qquad{\mbox{and}}\qquad\widetilde p\in\left[1,\frac{N}{\widetilde s}\right].\end{equation*}
Let~$\gamma$ be the curve defined in~\eqref{curva1}.
Let~$0\leq \theta_1 < \theta_2 \leq 1$ and let~$\gamma(\theta_1)=(s_{\theta_1},p_{\theta_1})$
and~$\gamma(\theta_2)=(s_{\theta_2},p_{\theta_2})$. 

Then, 
\begin{equation*}
0\le s_{\theta_2}\le s_{\theta_1}\le 1,\qquad
\min\{p,\widetilde p\}\le p_{\theta_1}, p_{\theta_2}
\qquad {\mbox{and}}\qquad p_{\theta_2}\le
\frac{N}{s_{\theta_2}}.
\end{equation*}

Also, if~$p\le\widetilde p$, then~$p_{\theta_1}\le p_{\theta_2}$.

Moreover, $s_{\theta_1}=s_{\theta_2}$ if and only if~$s=\widetilde s$,
and~$p_{\theta_1}= p_{\theta_2}$ if and only if~$p=\widetilde p$.
\end{lemma}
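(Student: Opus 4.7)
The plan is to verify each assertion directly from the explicit formula in~\eqref{curva1}, essentially mirroring the proof of Lemma~\ref{896364389bcnxmnzfejEERRTC00} but taking advantage of the cleaner structure provided by the identity $sp=N$. First, since $s_\theta=\theta\widetilde s+(1-\theta)s$ is affine in $\theta$ and $\widetilde s\le s$, the function $\theta\mapsto s_\theta$ is nonincreasing, which gives the monotonicity $0\le s_{\theta_2}\le s_{\theta_1}\le 1$. Rewriting
\[
\frac1{p_\theta}=\frac{\widetilde p+\theta(p-\widetilde p)}{p\widetilde p}=\frac{1-\theta}{p}+\frac{\theta}{\widetilde p},
\]
we see that $1/p_\theta$ is a convex combination of $1/p$ and $1/\widetilde p$, so $1/p_\theta\le \max\{1/p,1/\widetilde p\}=1/\min\{p,\widetilde p\}$, yielding $p_\theta\ge \min\{p,\widetilde p\}$ for every $\theta\in[0,1]$. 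The statement $p_{\theta_1}\le p_{\theta_2}$ under the assumption $p\le\widetilde p$ follows from the same identity, since in that case $\theta\mapsto 1/p_\theta$ is nonincreasing.

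The main point of the lemma is the bound $p_{\theta_2}\le N/s_{\theta_2}$, i.e. $s_{\theta_2}p_{\theta_2}\le N$, and this is where I would concentrate the computation. When $s_{\theta_2}=0$ (which, by the assumption $sp=N\ge 1$, can only happen if $\widetilde s=0$ and $\theta_2=1$), the inequality is trivial with the convention $N/0=+\infty$. Otherwise, I would clear denominators and write the inequality as
\[
\big(\theta_2\widetilde s+(1-\theta_2)s\big)\,p\widetilde p\le N\big(\widetilde p+\theta_2(p-\widetilde p)\big).
\]
Using $sp=N$ to replace $(1-\theta_2)sp\widetilde p$ by $(1-\theta_2)N\widetilde p$ and cancelling, the inequality reduces (after dividing by $\theta_2>0$, the case $\theta_2=0$ being an equality) to the single condition $\widetilde s\widetilde p\le N$, which is precisely the standing hypothesis $\widetilde p\le N/\widetilde s$.

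The final "if and only if" statements are immediate from the affine formula for $s_\theta$ (which is constant in $\theta$ exactly when $\widetilde s=s$) and from the fact that $1/p_\theta$ is affine in $\theta$ with slope $1/\widetilde p-1/p$ (which vanishes exactly when $\widetilde p=p$). I do not foresee a serious obstacle: the only step that is not a one-line verification is the bound $s_{\theta_2}p_{\theta_2}\le N$, but the algebraic reduction above shows that, because the curve $\gamma$ has been calibrated exactly so that the critical Sobolev relation degenerates to $\widetilde s\widetilde p\le N$ on the endpoints, this bound is forced by the hypotheses.
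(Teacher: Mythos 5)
Your proof is correct and follows essentially the same route as the paper. The paper handles the key inequality $p_{\theta_2}\le N/s_{\theta_2}$ slightly more directly: rather than clearing denominators, it rewrites $1/p_{\theta_2}=(1-\theta_2)/p+\theta_2/\widetilde p$ and bounds each term from below using $1/p=s/N$ (from $sp=N$) and $1/\widetilde p\ge\widetilde s/N$ (from $\widetilde s\widetilde p\le N$), so the convex combination is at least $s_{\theta_2}/N$; your multiply-out-and-cancel computation is the same argument under the hood and is equally valid.
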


\begin{proof}
By inspection, one sees that~$0\le s_{\theta_2}\le s_{\theta_1}\le 1$. Also, the fact that~$ p_{\theta}\ge \min\{p,\widetilde p\}$ for all~$\theta\in[0,1]$ follows from the
computations in~\eqref{4935634ewfgkjewbgbvsdnmvdmn}.

We now check that
\begin{equation}\label{754839uiohfdjskdns1234567890987opiuytr}
p_{\theta_2}\le\frac{N}{s_{\theta_2}}.
\end{equation}
For this, we observe that
$$ sp=N\ge \widetilde s\widetilde p$$
and therefore
\begin{eqnarray*}&&
\frac1{p_{\theta_2}}=\frac1p+\theta_2\left(\frac1{\widetilde p}-\frac1p\right)=\frac{1-\theta_2}p +\frac{\theta_2}{\widetilde p}
\ge \frac{(1-\theta_2)s}N +\frac{\theta_2\widetilde s}{N}=\frac{s_{\theta_2}}N,
\end{eqnarray*}
which gives~\eqref{754839uiohfdjskdns1234567890987opiuytr}.

Moreover, if~$p\le\widetilde p$, then, by the definition of~$p_\theta$ it follows that~$ p_{\theta_1}\le p_{\theta_2}$.
\end{proof}

We now deal with the case~$\Omega=\R^N$.

\begin{proof}[Proof of Theorem~\ref{teorema1sp=N}]
Let~$\widetilde{s}$ and~$\widetilde{p}$ satisfy~\eqref{insiemeimmersioneRNsp=N}. 
If~$\widetilde s \widetilde p =N$, then the result follows from~\cite[formula~(1.6) in Theorem~B]{MR3990737}. If not,
by Lemma~\ref{curvaq1sp=N} there exist~$q\in[p,+\infty)$ and~$\widetilde{\theta}\in (0,1]$ such that
\[
\widetilde{s}=(1-\widetilde{\theta})s \qquad \mbox{and} \qquad{\widetilde{p}}=
\frac{pq}{q-\widetilde\theta(q-p)}=\frac{pq}{p\widetilde\theta +q(1-\widetilde\theta)}.
\]
Thus, by Theorem~\ref{THMbrezismironuescu}, there exists a positive constant~$C_1=C_1(N, s, p, \widetilde s, \widetilde p)$ such that, for any~$u\in L^q(\R^N)\cap W^{s,p}(\R^N)$,
\begin{equation}\label{interpolazione11sp=N}
\|u\|_{W^{\widetilde{s},\widetilde{p}}(\R^N)}\le C_1 \|u\|_{L^q(\R^N)}^{\widetilde{\theta}}\|u\|_{W^{s,p}(\R^N)}^{1-\widetilde{\theta}}.
\end{equation}
Moreover, since~$q\in [p,+\infty)$, the space~$W^{s,p}(\R^N)$ is 
continuously embedded in~$L^q(\R^N)$, namely
\[
\|u\|_{L^q(\R^N)}\leq C_2\|u\|_{W^{s,p}(\R^N)}
\]
for some~$C_2=C_2(N, p, s)>0$ (see e.g.~\cite[Theorem~6.9]{MR2944369}). Therefore, for any~$u\in L^q(\R^N)\cap W^{s,p}(\R^N)=W^{s,p}(\R^N)$, from this and~\eqref{interpolazione11sp=N} we deduce that
\[
\|u\|_{W^{\widetilde{s},\widetilde{p}}(\R^N)}\leq C_1\, C_2^{\widetilde{\theta}}\|u\|_{W^{s,p}(\R^N)},
\]
which gives~\eqref{inequality1sp=N}.

Let now~$0\le \theta_1<\theta_2\le1$ and consider~$\gamma(\theta_1)=(s_{\theta_1},p_{\theta_1})$
and~$\gamma(\theta_2)=(s_{\theta_2},p_{\theta_2})$,
where the curve~$\gamma$ has been defined in~\eqref{curva1}.
Then, Lemma~\ref{896364389bcnxmnzfejEERRTC00BIS}
gives that we can use~\eqref{inequality1sp=N}
with~$s:=s_{\theta_1}$, $p:=p_{\theta_1}$, $\widetilde s:=s_{\theta_2}$
and~$\widetilde p:=p_{\theta_2}$. We thereby obtain that~$
W^{s_{\theta_1}, p_{\theta_1}}(\R^N)$
is continuously embedded in~$W^{s_{\theta_2}, p_{\theta_2}}(\R^N)$,
as desired.

We stress that in the case $N=1$,
one has that the space~$W^{1, 1}(\R^N)$ is continuously embedded in~$L^\infty(\R^N)$ (see e.g.~\cite[Theorem~8.8]{MR2759829}).
This concludes the proof of Theorem~\ref{teorema1sp=N}.
\end{proof}

\begin{proof}[Proof of Corollary~\ref{corollaryBMO}]
In light of Theorem~\ref{teorema1sp=N}, we only need to show that,
for all~$u\in W^{\widetilde s, \widetilde p}(\R^N)$,
\begin{equation}\label{claimBMO}
\|u\|_{BMO}\le C_1 \|u\|_{W^{\widetilde s, \widetilde p}(\R^N)}.
\end{equation}
To this end, we observe that, for any~$r>0$ and for any~$x\in\R^N$,
\[
\begin{split}
\iint_{B_r (x)\times B_r (x)} |u(y)-u(z)| \, dy\, dz & = \iint_{B_r (x)\times B_r (x)}
\frac{|u(y)-u(z)|}{|y-z|^{\frac{2N}{\widetilde p}}} \, |y-z|^{\frac{2N}{\widetilde p}}\, dy \,dz\\
&\le (2r)^{\frac{2N}{\widetilde p}}
\iint_{B_r (x)\times B_r (x)}
\frac{|u(y)-u(z)|}{|y-z|^{\frac{2N}{\widetilde p}}} \, dy \,dz\\
&\le (2r)^{\frac{2N}{\widetilde p}}
|B_r|^{\frac{2\widetilde p -2}{\widetilde p}}
\left(\;\iint_{B_r (x)\times B_r (x)} 
\frac{|u(y)-u(z)|^{\widetilde p}}{|y-z|^{2N}} \, dy\, dz\right)^{\frac{1}{\widetilde p}} 
\\
&\le (2r)^{\frac{2N}{\widetilde p}} \, |B_r|^{2-\frac{2}{\widetilde p}} \, \|u\|_{W^{\widetilde s, \widetilde p}(\R^N)}.
\end{split}
\]
Dividing by~$|B_r|^2$ and
taking the supremum over~$r>0$ and~$x\in\R^N$, we get the desired result in~\eqref{claimBMO}.
\end{proof}

We now consider the case in which~$sp=N$ and~$\Omega$ is an open and bounded domain Lipschitz boundary.

\begin{proof}[Proof of Theorem~\ref{teorema2sp=N}]
Let~$\widetilde s$ and~$\widetilde p$ satisfy~\eqref{insiemeimmersioneOmegasp=N}. Then, either~$\widetilde p\ge p$ or~$\widetilde p< p$.

Suppose first that~$\widetilde p\ge p$.
If in addition~$\widetilde s \widetilde p =N$, then the desired
result follows from~\cite[formula~(1.6) in Theorem~B]{MR3990737}.

If instead~$\widetilde s \widetilde p <N$, we observe that~$\widetilde s$ and~$\widetilde p$ satisfy~\eqref{insiemeimmersioneRNsp=N},
and therefore we are in the position of
exploiting Lemma~\ref{curvaq1sp=N}. In this way, we obtain that 
there exist~$q\in[p,+\infty)$ and~$\widetilde{\theta}\in (0,1]$
such that
\[
\widetilde{s}=(1-\widetilde{\theta})s \qquad \mbox{and} \qquad{\widetilde{p}}=
\frac{pq}{q-\widetilde\theta(q-p)}=\frac{pq}{p\widetilde\theta +q(1-\widetilde\theta)}.
\]
{F}rom this and Theorem~\ref{THMbrezismironuescu} there exists a positive constant~$C_1=C_1(N, s, p, \widetilde s, \widetilde p)$ such that, for any~$u\in L^q(\Omega)\cap W^{s,p}(\Omega)$,
\begin{equation}\label{interpolazione12sp=N}
\|u\|_{W^{\widetilde{s},\widetilde{p}}(\Omega)}\le C \|u\|_{L^q(\Omega)}^{\widetilde{\theta}}\|u\|_{W^{s,p}(\Omega)}^{1-\widetilde{\theta}}.
\end{equation}
Moreover, since~$q\in [p,+\infty)$, the space~$W^{s,p}(\Omega)$ is 
continuously embedded in~$L^q(\Omega)$, namely there exists~$C_2=C_2(N, s, p, \Omega)>0$ such that
\[
\|u\|_{L^q(\Omega)}\le C_2\|u\|_{W^{s,p}(\Omega)}
,\]
see e.g.~\cite[Theorem 6.10]{MR2944369}. Thus, for any~$u\in L^q(\Omega)\cap W^{s,p}(\Omega)=W^{s,p}(\Omega)$, this and~\eqref{interpolazione12sp=N} give that
\[
\|u\|_{W^{\widetilde{s},\widetilde{p}}(\Omega)}\le C_1\, C_2^{\widetilde{\theta}}\|u\|_{W^{s,p}(\Omega)},
\]
which establishes~\eqref{inequality2sp=N}.

Let us consider the case~$\widetilde p< p$. If~$\widetilde s>0$, then the desired embedding is a direct consequence of Lemma~\ref{immersionidisotto}. Otherwise, if~$\widetilde s=0$, the desired result is the one stated e.g. in~\cite[Theorem~6.10]{MR2944369}.

Let now~$0\le \theta_1<\theta_2\le1$ and consider~$\gamma(\theta_1)=(s_{\theta_1},p_{\theta_1})$
and~$\gamma(\theta_2)=(s_{\theta_2},p_{\theta_2})$,
where the curve~$\gamma$ has been defined in~\eqref{curva1}.
Then, Lemma~\ref{896364389bcnxmnzfejEERRTC00BIS}
gives that we can use~\eqref{inequality2sp=N}
with~$s:=s_{\theta_1}$, $p:=p_{\theta_1}$, $\widetilde s:=s_{\theta_2}$
and~$\widetilde p:=p_{\theta_2}$. We thereby obtain that~$
W^{s_{\theta_1}, p_{\theta_1}}(\Omega)$
is continuously embedded in~$W^{s_{\theta_2}, p_{\theta_2}}(\Omega)$,
as desired.

We stress that in the case~$N=1$,
the space~$W^{1, 1}(\Omega)$ is continuously embedded in~$L^\infty(\Omega)$ (see e.g.~\cite[Theorem~8.8]{MR2759829}).
This concludes the proof of Theorem~\ref{teorema2sp=N}.
\end{proof}

Now, we prove the desired compact embedding in the case~$sp=N$.

\begin{proof}[Proof of Theorem~\ref{teorema3sp=N}]
Let~$\widetilde{s}$ and~$\widetilde{p}$ satisfy~\eqref{2insiemeimmersioneOmegasp=N}.

If~$\widetilde{p}=p$, the desired compact embedding is a direct 
consequence of Lemma~\ref{orizzontalecompatto}. Thus, we now
distinguish
the cases~$\widetilde{p}>p$ and~$\widetilde{p}<p$.

If~$\widetilde{p}>p$, we set~$\overline{s}:={N}/{\widetilde{p}}$
and we observe that~$\overline{s}\in(0,s)$.
We can therefore apply Theorem~\ref{teorema2sp=N} and conclude that~$
W^{s,p}(\Omega)$ is continuously embedded in~$W^{\overline{s},\widetilde{p}}(\Omega)$.
Moreover,
we have that~$\widetilde s\in[0,\overline s)$, 
and so we deduce from Lemma~\ref{orizzontalecompatto} that~$
W^{\overline{s},\widetilde{p}}(\Omega)$
is compactly embedded in~$W^{\widetilde{s},\widetilde{p}}(\Omega)$.
Combining these pieces of information, 
we obtain that the space~$W^{s,p}(\Omega)$ is compactly embedded in~$W^{\widetilde{s},\widetilde{p}}(\Omega)$, as desired.

Now, we deal with the case~$\widetilde{p}<p$.
We observe that~$\frac{s+\widetilde{s}}{2}\in(0,s)$
and that
$$ \widetilde p<p=\frac{N}{s}< \frac{2N}{s+\widetilde s}.$$
Thus, we can apply 
Theorem~\ref{teorema2sp=N} to see that~$W^{s,p}(\Omega)$ is continuously embedded in~$W^{\frac{s+\widetilde{s}}{2},\widetilde{p}}(\Omega)$.
Moreover, since~$\frac{s+\widetilde{s}}{2}>\widetilde{s}$, from Lemma~\ref{orizzontalecompatto}
we infer that~$W^{\frac{s+\widetilde{s}}{2},\widetilde{p}}(\Omega)$
is compactly embedded in~$W^{\widetilde{s},\widetilde{p}}(\Omega)$.
{F}rom these considerations, 
we obtain that the space~$W^{s,p}(\Omega)$ is compactly embedded in~$W^{\widetilde{s},\widetilde{p}}(\Omega)$, which concludes the case~$\widetilde{p}<p$.

The first statement of Theorem~\ref{teorema3sp=N} is thereby
proved, and we now focus on the second one.

Let~$0< \theta_1<\theta_2\le1$ and consider~$\gamma(\theta_1)=(s_{\theta_1},p_{\theta_1})$
and~$\gamma(\theta_2)=(s_{\theta_2},p_{\theta_2})$,
where the curve~$\gamma$ has been defined in~\eqref{curva1}.
Then, Lemma~\ref{896364389bcnxmnzfejEERRTC00BIS}
gives that we can use the first statement of Theorem~\ref{teorema3sp=N} 
with~$s:=s_{\theta_1}$, $p:=p_{\theta_1}$, $\widetilde s:=s_{\theta_2}$
and~$\widetilde p:=p_{\theta_2}$ and obtain that~$
W^{s_{\theta_1}, p_{\theta_1}}(\Omega)$
is compactly embedded in~$W^{s_{\theta_2}, p_{\theta_2}}(\Omega)$,
as desired.
\end{proof}

\subsection{The case $sp>N$}

We now consider the case~$sp>N$. We provide some auxiliary statements.

\begin{lemma}\label{lemmaspr1}
Let~$s\in(0,1]$ and~$p\in[1,+\infty]$ be such that~$sp>N$. 
Let
\begin{equation}\label{step1sptilde}
\overline s \in \left(\frac{sp-N}{p}, s\right]
\qquad \mbox{and}\qquad
\overline p := \frac{Np}{N-(s-\overline s)p}.
\end{equation}
Let~$\Omega$ be either~$\R^N$ or a bounded Lipschitz domain of~$\R^N$.

Then, $W^{s, p}(\Omega)$ is continuously embedded in~$W^{\overline s, \overline p}(\Omega)$.
\end{lemma}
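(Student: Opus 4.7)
The plan is to follow the same template as the proof of Theorem~\ref{teorema1}, but with the intermediate endpoint being a Hölder space rather than an $L^q$ space. The key algebraic observation is that the prescribed curve $\overline p = Np/(N-(s-\overline s)p)$ is exactly the locus where the Sobolev index $s - N/p$ is preserved, that is, $\overline s - N/\overline p = s - N/p$. I therefore set $\alpha := s - N/p$, and note that under the standing hypotheses one has $\alpha \in (0, 1)$ whenever $p < +\infty$; the case $p = +\infty$ forces $\overline s = s$ and $\overline p = p$, for which the claim is trivial.

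First, I would invoke the classical fractional Morrey-type embedding for $sp > N$ (see for instance~\cite[Theorem~8.2]{MR2944369}) to obtain the continuous embedding $W^{s,p}(\Omega) \hookrightarrow C^{0,\alpha}(\Omega) = W^{\alpha,\infty}(\Omega)$. Then I would apply Theorem~\ref{THMbrezismironuescu} with
\[
s_1 := \alpha, \qquad p_1 := +\infty, \qquad s_2 := s, \qquad p_2 := p, \qquad \theta := \frac{p(s - \overline s)}{N}.
\]
A short algebraic verification yields $s_\theta = \theta \alpha + (1-\theta) s = \overline s$ and $p_\theta = p/(1-\theta) = \overline p$, and shows that $\theta \in [0, 1)$ precisely because the assumption $\overline s > (sp-N)/p$ rewrites as $s - \overline s < N/p$. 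The hypotheses $0 \le s_1 \le s_2 \le 1$ and $s_2 \ne p_2$ are easy to check. The resulting interpolation inequality then reads
\[
\|u\|_{W^{\overline s, \overline p}(\Omega)} \le C \, \|u\|_{C^{0,\alpha}(\Omega)}^{\theta} \, \|u\|_{W^{s,p}(\Omega)}^{1-\theta},
\]
and combining this with the Morrey bound from the first step produces the desired embedding.

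I expect the only mild obstacle to be the analogue of Lemmata~\ref{curvaq11}, \ref{curvaq13} and~\ref{curvaq1sp=N}, namely verifying that the chosen $\theta$ places the interpolated point exactly on the prescribed critical curve. This is simpler here than in the subcritical regimes because the second interpolation endpoint is forced to be $L^\infty$-based: the candidate $q = Np/(N-sp)$ that would naturally appear by analogy with the proof of Theorem~\ref{teorema1} is negative when $sp > N$, so the correct substitute is precisely the Hölder endpoint supplied by the Morrey embedding.
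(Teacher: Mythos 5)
Your proof is correct and takes essentially the same route as the paper: both handle the degenerate endpoint case separately, then interpolate (via Theorem~\ref{THMbrezismironuescu}) between $W^{s,p}(\Omega)$ and the H\"older endpoint $C^{0,(sp-N)/p}(\Omega)$ supplied by the Morrey embedding, with the same choice of interpolation parameters. The only cosmetic difference is that you write out $\theta = p(s-\overline s)/N$ explicitly, whereas the paper identifies the curve $\theta\mapsto(s-\theta N/p,\,p/(1-\theta))$ and observes that $(\overline s,\overline p)$ lies on it; these are the same computation.
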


\begin{proof}
To this end, notice that if~$\overline s=s$ then~$\overline p=p$,
and therefore the claim is trivial.

Otherwise, we observe that,
if~$\overline s$ and~$ \overline p$ satisfy~\eqref{step1sptilde},
then the point~$(\overline s, \overline p)$ belongs to the curve
\[
\left(s-\frac{\theta N}{p}, \frac{p}{1-\theta}\right)\quad {\mbox{with }} \theta\in[0,1].
\]
This says that we are in the setting of
Theorem~\ref{THMbrezismironuescu} with~$s_1:=\frac{sp-N}{p}$,
$p_1:=+\infty$, $s_2:=s$ and~$p_2:=p$. Therefore,
we can exploit the claim of Theorem~\ref{THMbrezismironuescu}
with~$s_\theta:=\overline s$ and~$p_\theta:=\overline p$.
In this way, we obtain that 
there exists a constant~$C=C(N,s,p,\theta)>0$ such that
\begin{equation*}
\|u\|_{W^{\overline s, \overline p}(\Omega)}
\leq C \|u\|_{C^{0,\frac{sp-N}{p}}(\Omega)}^\theta
\|u\|_{W^{ s,  p}(\Omega)}^{1-\theta}.
\end{equation*}
{F}rom this and the continuous embedding of~$W^{ s,  p}(\Omega)$ in~$C^{0,\frac{sp-N}{p}}(\Omega)$ (see~\cite[Theorem~8.2]{MR2944369}),
we thus conclude that
\[
\|u\|_{W^{\overline s, \overline p}(\Omega)}
\leq C \|u\|_{W^{ s,  p}(\Omega)},
\]
which gives the desired result.
\end{proof}

\begin{lemma}\label{lemmaspr2}
Let~$s\in(0,1]$ and~$p\in[1,+\infty]$.
Let~$\widetilde s\in[0,s)$.
Let~$\Omega$ be either~$\R^N$ or a bounded Lipschitz domain of~$\R^N$.

Then, $W^{s, p}(\Omega)$ is continuously embedded in~$W^{\widetilde s,  p}(\Omega)$.
\end{lemma}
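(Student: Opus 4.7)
The proof will split naturally according to $\widetilde s$ and to the finiteness of $p$, and each piece essentially reduces to a tool already developed in the paper.

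If $\widetilde s=0$, the claim is immediate from the definition of the norm in~\eqref{normaWsp}, since $\|u\|_{W^{0,p}(\Omega)}=\|u\|_{L^p(\Omega)}\le \|u\|_{W^{s,p}(\Omega)}$. Hence from now on I would assume $\widetilde s\in(0,s)$.

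When $\Omega$ is a bounded Lipschitz domain, Lemma~\ref{orizzontalecompatto} already delivers the compact (hence continuous) embedding of $W^{s,p}(\Omega)$ into $W^{\widetilde s,p}(\Omega)$ for every $p\in[1,+\infty)$, so nothing new is needed in this regime. For the remaining case $p=+\infty$ with $\Omega$ bounded, I would invoke Lemma~\ref{immersionidisotto} with $\widetilde p=p=+\infty$ (noting that $W^{s,\infty}(\Omega)=C^{0,s}(\Omega)$ and $W^{\widetilde s,\infty}(\Omega)=C^{0,\widetilde s}(\Omega)$), which yields the desired $C^{0,s}(\Omega)\hookrightarrow C^{0,\widetilde s}(\Omega)$.

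The genuinely new case is $\Omega=\R^N$. For $p\in[1,+\infty)$, I would apply Theorem~\ref{THMbrezismironuescu} with $s_1:=0$, $s_2:=s$, $p_1:=p_2:=p$ and $\theta:=1-\widetilde s/s\in(0,1)$, so that $s_\theta=\widetilde s$ and $p_\theta=p$; the interpolation inequality then gives
\[
\|u\|_{W^{\widetilde s,p}(\R^N)}\le C\,\|u\|_{L^p(\R^N)}^\theta\,\|u\|_{W^{s,p}(\R^N)}^{1-\theta}\le C\,\|u\|_{W^{s,p}(\R^N)}.
\]
For $p=+\infty$ I would argue directly on $[u]_{C^{0,\widetilde s}(\R^N)}$ by splitting the supremum according to $|x-y|\le 1$ or $|x-y|>1$: in the first region $\frac{|u(x)-u(y)|}{|x-y|^{\widetilde s}}\le [u]_{C^{0,s}(\R^N)}\,|x-y|^{s-\widetilde s}\le[u]_{C^{0,s}(\R^N)}$, while in the second $\frac{|u(x)-u(y)|}{|x-y|^{\widetilde s}}\le 2\,\|u\|_{L^\infty(\R^N)}$.

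The only delicate point is that Theorem~\ref{THMbrezismironuescu} requires $s_2\ne p_2$, so the Brezis--Mironescu step excludes the corner $s=p=1$ on $\R^N$. I would dispose of this remaining edge case by a direct, elementary estimate: for $u\in W^{1,1}(\R^N)$, use the translation bounds $\int_{\R^N}|u(x+h)-u(x)|\,dx\le |h|\,\|\nabla u\|_{L^1(\R^N)}$ and $\int_{\R^N}|u(x+h)-u(x)|\,dx\le 2\|u\|_{L^1(\R^N)}$, then integrate against $|h|^{-N-\widetilde s}\,dh$ splitting at $|h|=1$. This is the main obstacle, but a very mild one, since both pieces produce finite integrals thanks to $\widetilde s\in(0,1)$.
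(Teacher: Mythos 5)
Your core step is the same as the paper's: apply Theorem~\ref{THMbrezismironuescu} with $s_1:=0$, $p_1:=p_2:=p$, $s_2:=s$, choosing $\theta$ so that $s_\theta=\widetilde s$ and $p_\theta=p$. The paper does exactly this in one line, for all $\Omega$ and all $p$ simultaneously, without your case split, and it does not address the corner $s=p=1$, which you correctly flag as violating the hypothesis $s_2\ne p_2$ in the paper's restatement of Brezis--Mironescu; your translation estimate for $W^{1,1}(\R^N)$ patches this on $\R^N$. Two small loose ends remain on your side, though. For bounded $\Omega$ and $p<+\infty$ you route through Lemma~\ref{orizzontalecompatto}, whose own proof applies Theorem~\ref{THMbrezismironuescu} with the same $s_2=s$, $p_2=p$, and hence hits the same $s=p=1$ obstruction you set out to avoid; and for bounded $\Omega$ with $p=+\infty$ you invoke Lemma~\ref{immersionidisotto}, which is stated only for $s\in(0,1)$, so $s=1$, $p=+\infty$ needs a separate (trivial, since $\Omega$ is bounded) H\"older estimate. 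Both are easily patched, e.g.\ by your own splitting argument after extension to $\R^N$; alternatively, one can note that the genuine exceptional set in Brezis--Mironescu's Theorem~1 is strictly smaller than the paper's blanket condition $s_2=p_2$ — with $s_1=0$, $p_1=p_2=1$ the exception would require $s_2-s_1\le 1-1/p_1=0$, which fails — so the interpolation inequality does in fact hold at $s=p=1$ and the paper's proof is correct upon reading the original theorem rather than Theorem~\ref{THMbrezismironuescu}. In any case the corner is never reached downstream, since Lemma~\ref{lemmaspr2} is only applied when $sp>N$, which excludes $s=p=1$.
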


\begin{proof}
We notice that the point~$(\widetilde s, p)$ belongs to the curve~$
\big((1-\theta)s, p\big)$, with~$\theta\in(0,1]$.
As a consequence,
we can use Theorem~\ref{THMbrezismironuescu} with~$s_1:=0$,
$p_1:=p$, $s_2:=s$, $p_2:=p$, $s_\theta:=\widetilde s$ and~$p_\theta:=p$, obtaining that there exists a constant~$C=C(N,s,p,\theta)>0$ such that
\begin{equation*}
\|u\|_{W^{\widetilde s, p}(\Omega)}
\leq C  \|u\|_{L^p(\Omega)}^\theta
\|u\|_{W^{ s,  p}(\Omega)}^{1-\theta}\le C\|u\|_{W^{ s,  p}(\Omega)},
\end{equation*}
which implies the desired claim.
\end{proof}

We will also need the following observation:

\begin{lemma}\label{896364389bcnxmnzfejEERRTC00TER}
Let~$s\in(0,1]$ and~$p\in[1,+\infty]$ be such that~$sp>N$. Let~$\widetilde s\in[0,s]$ and~$\widetilde p\in[1,+\infty]$.

Let~$\gamma$ be the curve defined in~\eqref{curva1}.
Let~$0\leq \theta_1 < \theta_2 \leq 1$ and let~$\gamma(\theta_1)=(s_{\theta_1},p_{\theta_1})$
and~$\gamma(\theta_2)=(s_{\theta_2},p_{\theta_2})$. 

Then, we have that~$0\le s_{\theta_2}\le s_{\theta_1}\le 1$ and that~$
\min\{p,\widetilde p\}\le p_{\theta_1}, p_{\theta_2}\le+\infty$.

If~$p\le\widetilde p$, then~$p_{\theta_1}\le p_{\theta_2}$. 

Furthermore, if
\begin{equation}\label{432i1ohejwfbdnsDFRSTRWTI954078}
\dfrac{sp-N}{p}< \widetilde s\le s \qquad{\mbox{and}}\qquad
1\le \widetilde p\le \dfrac{Np}{N-(s-\widetilde s)p},
\end{equation}
then
\begin{equation}\label{432i1ohejwfbdnsDFRSTRWTI9540782}
\dfrac{s_{\theta_1}p_{\theta_1}-N}{p_{\theta_1}}< s_{\theta_2}\le s_{\theta_1} \qquad{\mbox{and}}
\qquad
1\le p_{\theta_2}\le \dfrac{Np_{\theta_1}}{N-(s_{\theta_1}-s_{\theta_2})p_{\theta_1}}.
\end{equation}

Moreover, $s_{\theta_1}=s_{\theta_2}$ if and only if~$s=\widetilde s$,
and~$p_{\theta_1}= p_{\theta_2}$ if and only if~$p=\widetilde p$.

Also,
$$ p_{\theta_2}=
\frac{Np_{\theta_1}}{N-(s_{\theta_1}-s_{\theta_2})p_{\theta_1}} $$
if and only if
$$\widetilde p= \frac{Np}{N-(s-\widetilde s)p}.$$
\end{lemma}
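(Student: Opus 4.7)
\emph{Plan.} I would proceed by exploiting the essentially affine structure of $\gamma$ to reduce every assertion to a one-line computation. First, from
\[
s_{\theta_1}-s_{\theta_2}=(\theta_2-\theta_1)(s-\widetilde s)\ge0
\qquad\text{and}\qquad
\frac{1}{p_\theta}=\frac{1-\theta}{p}+\frac{\theta}{\widetilde p}
\]
(with the convention $1/\infty=0$) the monotonicity $s_{\theta_2}\le s_{\theta_1}$, the bounds $0\le s_{\theta_2}$ and $s_{\theta_1}\le1$, and the two-sided estimate $\min\{p,\widetilde p\}\le p_{\theta_i}\le+\infty$ are all immediate; the monotonicity of $p_\theta$ when $p\le\widetilde p$ follows at once because in that case $1/p_\theta$ is nonincreasing in $\theta$.

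The main part of the argument will be~\eqref{432i1ohejwfbdnsDFRSTRWTI9540782}. My plan is to rewrite hypothesis~\eqref{432i1ohejwfbdnsDFRSTRWTI954078} in the equivalent ``dual'' form
\[
\frac{N}{p}>s-\widetilde s
\qquad\text{and}\qquad
\frac{1}{p}-\frac{1}{\widetilde p}\le \frac{s-\widetilde s}{N},
\]
and, in the same way, to rephrase the desired conclusion as $N/p_{\theta_1}>s_{\theta_1}-s_{\theta_2}$ together with $1/p_{\theta_1}-1/p_{\theta_2}\le(s_{\theta_1}-s_{\theta_2})/N$. Combining the two identities
\[
\frac{1}{p_{\theta_1}}-\frac{1}{p_{\theta_2}}=(\theta_2-\theta_1)\left(\frac{1}{p}-\frac{1}{\widetilde p}\right)
\qquad\text{and}\qquad
s_{\theta_1}-s_{\theta_2}=(\theta_2-\theta_1)(s-\widetilde s),
\]
the second inequality reduces immediately to its counterpart in the hypothesis. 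For the first, I would compute
\[
\frac{N}{p_{\theta_1}}=(1-\theta_1)\frac{N}{p}+\theta_1\frac{N}{\widetilde p}\ge \frac{N}{p}-\theta_1(s-\widetilde s),
\]
using the second form of the hypothesis, and then exploit the strict inequality $N/p>s-\widetilde s$ to obtain
\[
\frac{N}{p_{\theta_1}}>(1-\theta_1)(s-\widetilde s)\ge(\theta_2-\theta_1)(s-\widetilde s)=s_{\theta_1}-s_{\theta_2},
\]
the case $\widetilde s=s$ being trivial since both sides of the claimed strict inequality are then $0$ and $N/p_{\theta_1}$ respectively.

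Finally, all the ``if and only if'' statements follow from $\theta_2-\theta_1>0$ and the two identities displayed above: $s_{\theta_1}=s_{\theta_2}$ iff $s=\widetilde s$, $p_{\theta_1}=p_{\theta_2}$ iff $p=\widetilde p$, and equality $p_{\theta_2}=Np_{\theta_1}/(N-(s_{\theta_1}-s_{\theta_2})p_{\theta_1})$ is precisely $1/p_{\theta_1}-1/p_{\theta_2}=(s_{\theta_1}-s_{\theta_2})/N$, which after division by $\theta_2-\theta_1$ is equivalent to $1/p-1/\widetilde p=(s-\widetilde s)/N$, i.e.\ to $\widetilde p=Np/(N-(s-\widetilde s)p)$. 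No substantial obstacle is foreseen; the only delicate point is the careful bookkeeping of strict versus non-strict inequalities, the sole source of strictness in the whole argument being the assumption $\widetilde s>(sp-N)/p$.
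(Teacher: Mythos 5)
Your proof is correct and takes essentially the same approach as the paper: both arguments rest on the affine identities $s_{\theta_1}-s_{\theta_2}=(\theta_2-\theta_1)(s-\widetilde s)$ and $1/p_{\theta_1}-1/p_{\theta_2}=(\theta_2-\theta_1)(1/p-1/\widetilde p)$, combined with the ``reciprocal'' reformulations of~\eqref{432i1ohejwfbdnsDFRSTRWTI954078} and~\eqref{432i1ohejwfbdnsDFRSTRWTI9540782}. The only cosmetic difference is that the paper expands $s_{\theta_2}-(s_{\theta_1}p_{\theta_1}-N)/p_{\theta_1}$ directly in $\theta_1,\theta_2$, while you normalize first; the bookkeeping of strict versus non-strict inequalities, which you flag as the delicate point, is handled correctly.
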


\begin{proof}
By inspection, one sees that~$0\le s_{\theta_2}\le s_{\theta_1}\le 1$
and~$ p_{\theta_1}$, $p_{\theta_2}\le+\infty$. Also, the fact that~$ p_{\theta}\ge \min\{p,\widetilde p\}$ for all~$\theta\in[0,1]$ follows from the
computations in~\eqref{4935634ewfgkjewbgbvsdnmvdmn}.

If in addition~$p\le\widetilde p$, by the definition of~$p_{\theta}$
it follows that~$p_{\theta_1}\le p_{\theta_2}$.

We now check that~\eqref{432i1ohejwfbdnsDFRSTRWTI954078}
implies~\eqref{432i1ohejwfbdnsDFRSTRWTI9540782}.
For this, we point out that
\begin{eqnarray*}&&
s_{\theta_2}-
\dfrac{s_{\theta_1}p_{\theta_1}-N}{p_{\theta_1}}
=s_{\theta_2}-s_{\theta_1}+\frac{N}{p_{\theta_1}}
= (\theta_2-\theta_1)(\widetilde s-s) +\frac{N}{p}+N\theta_1\left(\frac1{\widetilde p}-\frac1p\right)\\&&\qquad\qquad
\ge (\theta_2-\theta_1)(\widetilde s-s) +\frac{N}{p}
-N\theta_1\left(\frac{s-\widetilde s}N\right)\\&&\qquad\qquad=
\theta_2(\widetilde s-s)+\frac{N}{p}
>-\frac{N\theta_2}p+\frac{N}{p}\ge0,
\end{eqnarray*}
which gives the first claim in~\eqref{432i1ohejwfbdnsDFRSTRWTI9540782}.

Also,
\begin{eqnarray*}
&&\frac1{p_{\theta_2}}- \dfrac{N-(s_{\theta_1}-s_{\theta_2})p_{\theta_1}}{Np_{\theta_1}} =
\frac1{p_{\theta_2}}-\frac1{p_{\theta_1}}  +\dfrac{s_{\theta_1}-s_{\theta_2}}{N} \\&&\quad=
(\theta_2-\theta_1)\left(\frac1{\widetilde p}-\frac1p\right)
+\frac{(\theta_2-\theta_1)(s-\widetilde s)}N
\ge (\theta_2-\theta_1)\left(\frac{\widetilde s-s}N
+\frac{s-\widetilde s}N
\right)=0
\end{eqnarray*}
which gives the second claim in~\eqref{432i1ohejwfbdnsDFRSTRWTI9540782}. 

Finally,  the last statement follows from the definition in~\eqref{curva1}.
\end{proof}

We now establish Theorems~\ref{teorema1sp>N} and~\ref{teorema2sp>N}.

\begin{proof}[Proof of Theorems~\ref{teorema1sp>N}
and~\ref{teorema2sp>N}]
In what follows, $\Omega$ is either~$\R^N$ or an open and bounded
domain with Lipschitz boundary. In the first case, we suppose that~\eqref{insiemeimmersioneRNsp>N} is in force, while in the latter case
we assume~\eqref{insiemeimmersioneOmegasp>N}.

We provide the proof of the first statement of Theorems~\ref{teorema1sp>N}
and~\ref{teorema2sp>N}.

We first consider the case~$\widetilde p=+\infty$. Then, we are in the case~$\widetilde s\in[0,(sp-N)/{p}]$.
{F}rom~\cite[Theorem~8.2]{MR2944369} we know that~$W^{s,  p}(\Omega)$ is continuously embedded in~$C^{0,\frac{sp-N}{p}}(\Omega)$.
Moreover, we have that~$C^{0,\frac{sp-N}{p}}(\Omega)$ is continuously embedded in~$C^{0,\widetilde s}(\Omega)$. These considerations prove
that~$W^{s,  p}(\Omega)$ is continuously embedded in~$C^{0,\widetilde s}(\Omega)$, which is the desired statement when~$\widetilde p=+\infty$.

Now, we deal with the case~$\widetilde p\in [p,+\infty)$.
We set
\begin{equation}\label{desbarra06849}
\overline s:=s-N\left(\frac{1}{p}-\frac{1}{\widetilde p}\right)\end{equation}
and we point out that
$$ \overline s\in\left(\frac{sp-N}{p},s\right]\qquad
{\mbox{and}}\qquad \widetilde p = \frac{Np}{N-(s-\overline s)p}.$$
Thus, we are in the position of using Lemma~\ref{lemmaspr1}
with~$\overline p$ replaced by~$\widetilde p$ and we deduce that~$W^{ s,  p}(\Omega)$
is continuously embedded in~$W^{s-N\left(\frac{1}{p}-\frac{1}{\widetilde p}\right) , \widetilde p}(\Omega)$.

Also, we claim that
\begin{equation}\label{poiuytre7843887654eewfgcvyu23}
\overline s\in[\widetilde s, s].
\end{equation}
Indeed, since~$p\le\widetilde p$, we have that~$\overline s\le s$.

Moreover, if~$\widetilde s\in[0,(sp-N)/p]$, we see that
\begin{equation}\label{bnvcweihtu54y6584} \widetilde s\leq s-\frac{N}p < s-N\left(\frac{1}{p}-\frac{1}{\widetilde p}\right)=\overline s.
\end{equation}
If instead~$\widetilde s\in((sp-N)/p,s]$, we use the condition on~$\widetilde p$ in~\eqref{insiemeimmersioneRNsp>N} to see that
$$ \frac1{\widetilde p}\ge \frac1p-\frac{s-\widetilde s}{p}
$$ and therefore
\begin{equation}\label{bnvcweihtu54y65842}
\widetilde s\le s= s-\frac{N}p+\frac{N}p\le
s-\frac{N}p+\frac{N}{\widetilde p}+\frac{(s-\widetilde s)N}{p}
=\overline s+\frac{(s-\widetilde s)N}{p}\le \overline s.
\end{equation}
Gathering these observations, we obtain~\eqref{poiuytre7843887654eewfgcvyu23}.

Thanks to~\eqref{poiuytre7843887654eewfgcvyu23}, we can exploit Lemma~\ref{lemmaspr2} with~$p$
replaced by~$\widetilde p$ and~$s$ replaced by~$\overline s$.
In this way, we obtain that~$W^{s-N\left(\frac{1}{p}-\frac{1}{\widetilde p}\right) , \widetilde p}(\Omega)$ is continuously embedded in~$W^{\widetilde s , \widetilde p}(\Omega)$.
As a result, we conclude in this case that~$W^{ s,  p}(\Omega)$
is continuously embedded in~$W^{\widetilde s, \widetilde p}(\Omega)$,
as desired.

If~$\Omega$ is an open and bounded domain with Lipschitz boundary,
we also consider the case~$\widetilde p\in[1,p)$. In this situation,
we employ Lemma~\ref{immersionidisotto} and we deduce that~$W^{ s,  p}(\Omega)$
is continuously embedded in~$W^{\widetilde s, \widetilde p}(\Omega)$.

This concludes the proof of the first statement of Theorems~\ref{teorema1sp>N}
and~\ref{teorema2sp>N}.

We now prove the second claim of Theorems~\ref{teorema1sp>N}
and~\ref{teorema2sp>N}.
For this, take~$0\leq \theta_1 < \theta_2 \leq 1$
and consider~$\gamma(\theta_1)=(s_{\theta_1},p_{\theta_1})$
and~$\gamma(\theta_2)=(s_{\theta_2},p_{\theta_2})$,
where the curve~$\gamma$ has been defined in~\eqref{curva1}.
Lemma~\ref{896364389bcnxmnzfejEERRTC00TER} allows us
to use the first statement of either Theorem~\ref{teorema1sp>N}
or Theorem~\ref{teorema2sp>N}
with~$s:=s_{\theta_1}$, $p:=p_{\theta_1}$, $\widetilde s:=s_{\theta_2}$
and~$\widetilde p:=p_{\theta_2}$ and obtain the desired embedding.
\end{proof}

Now, we prove the desired compact embedding in the case~$sp>N$.

\begin{proof}[Proof of Theorem~\ref{teorema3sp>N}]
Let~$\widetilde{s}$ and~$\widetilde{p}$ satisfy~\eqref{2insiemeimmersioneOmegasp>N}.

If~$\widetilde{p}=p$, the desired compact embedding is a direct 
consequence of Lemma~\ref{orizzontalecompatto}. Thus, we now distinguish
the cases~$\widetilde{p}>p$ and~$\widetilde{p}<p$.

If~$\widetilde{p}\in(p,+\infty)$, we define~$\overline{s}$ as in~\eqref{desbarra06849}
and we recall that~$\overline s\in[\widetilde s,s)$ (thanks to~\eqref{poiuytre7843887654eewfgcvyu23} and the fact that~$\widetilde p\neq p$).

Also, if in particular~$\overline s\in( (sp-N)/p,s)$, we have that
\begin{eqnarray*}
\frac{Np}{N-(s-\overline s)p} = \frac{Np}{N-N\left(\frac1p-\frac1{\widetilde p}\right)p}=\frac{p}{1-\left(\frac1p-\frac1{\widetilde p}\right)p}=\widetilde p.
\end{eqnarray*}
As a result, we find that~$\overline s$ and~$\widetilde p$ satisfy
the assumptions in~\eqref{insiemeimmersioneOmegasp>N}. Therefore,
we can apply Theorem~\ref{teorema2sp>N} and conclude that~$W^{s,p}(\Omega)$ is continuously embedded in~$W^{\overline{s},\widetilde{p}}(\Omega)$.

Notice also that~$\overline s>\widetilde s$
(recall the computations in~\eqref{bnvcweihtu54y6584} and~\eqref{bnvcweihtu54y65842} and use the fact that~$ \widetilde s<s$).
Hence, we deduce from Lemma~\ref{orizzontalecompatto} that~$
W^{\overline{s},\widetilde{p}}(\Omega) $
is compactly embedded in~$W^{\widetilde{s},\widetilde{p}}(\Omega)$.
Combining these pieces of information,
we infer that the space~$W^{s,p}(\Omega)$ is compactly embedded in~$W^{\widetilde{s},\widetilde{p}}(\Omega)$, as desired.

Now, we deal with the case~$\widetilde{p}<p$.
We observe that~$\frac{s+\widetilde{s}}{2}\in(0,s)$.
Moreover, if~$\frac{s+\widetilde{s}}{2}\in((sp-N)/p,s)$,
\begin{eqnarray*}&&
\frac1{\widetilde p}>\frac1p>\frac1p-\frac{s-\widetilde s}{2N}.
\end{eqnarray*}
Hence, $\frac{s+\widetilde{s}}{2}$ and~$\widetilde p$ satisfy
the assumptions in~\eqref{insiemeimmersioneOmegasp>N}. Therefore,
we can apply Theorem~\ref{teorema2sp>N}
to see that~$W^{s,p}(\Omega) $ is continuously embedded in~$
W^{\frac{s+\widetilde{s}}{2},\widetilde{p}}(\Omega)$.

Moreover, since~$\frac{s+\widetilde{s}}{2}>\widetilde{s}$, from Lemma~\ref{orizzontalecompatto} we infer that~$W^{\frac{s+\widetilde{s}}{2},\widetilde{p}}(\Omega)$
is compactly embedded in~$W^{\widetilde{s},\widetilde{p}}(\Omega)$.
Gathering these considerations, 
we conclude that the space~$W^{s,p}(\Omega)$ is compactly embedded in~$W^{\widetilde{s},\widetilde{p}}(\Omega)$, which concludes the case~$\widetilde{p}<p$.

Now, let~$0< \theta_1<\theta_2\le1$ and consider~$\gamma(\theta_1)=(s_{\theta_1},p_{\theta_1})$
and~$\gamma(\theta_2)=(s_{\theta_2},p_{\theta_2})$,
where the curve~$\gamma$ has been defined in~\eqref{curva1}.
Then, Lemma~\ref{896364389bcnxmnzfejEERRTC00TER}
gives that we can use the first statement of Theorem~\ref{teorema3sp>N} 
with~$s:=s_{\theta_1}$, $p:=p_{\theta_1}$, $\widetilde s:=s_{\theta_2}$
and~$\widetilde p:=p_{\theta_2}$ and obtain that~$
W^{s_{\theta_1}, p_{\theta_1}}(\Omega)$
is compactly embedded in~$W^{s_{\theta_2}, p_{\theta_2}}(\Omega)$,
as desired.
\end{proof}

\section*{Acknowledgements} 
All the authors are members of the Australian Mathematical Society (AustMS). CS and EPL are members of the INdAM--GNAMPA.

This work has been supported by the Australian Laureate Fellowship FL190100081 and by the Australian Future Fellowship
FT230100333.

\vfill

\end{document}